\DeclareMathOperator{\Var}{Var}
\def\be{\begin{equation}}
\def\ee{\end{equation}}
\def\bea{\begin{equation*}}
\def\eea{\end{equation*}}
\def\begs{\begin{split}}
\def\ends{\end{split}}
\newtheorem{thm}{Theorem}[section]
\newtheorem{lma}[thm]{Lemma}
\newtheorem{cor}[thm]{Corollary}
\newtheorem{prop}[thm]{Proposition}
\newtheorem{df}[thm]{Definition}
\theoremstyle{remark}
\newtheorem{preremark}[thm]{Remark}
\newtheorem{preex}[thm]{Example}
\newenvironment{remark}{\begin{preremark}}{\qed\end{preremark}}
\numberwithin{equation}{section}
\newcounter{cnstcnt}
\newcommand{\newconstant}{%
\refstepcounter{cnstcnt}%
\ensuremath{C_{\thecnstcnt}}}
\newcommand{\oldconstant}[1]{\ensuremath{C_{\ref{#1}}}}
\begin{document}

\title{Subdiffusive concentration in first-passage percolation}
\author{Michael Damron \thanks{The research of M. D. is supported by NSF grant DMS-0901534.} \\ \small{Indiana University, Bloomington} \\ \small{Princeton University} \and Jack Hanson \\ \small{Indiana University, Bloomington} \\ \small{Princeton University} \and Philippe Sosoe\thanks{The research of P. S. is supported by an NSERC postgraduate fellowship.} \\ \small{Princeton University}}

\maketitle

\abstract{We prove exponential concentration in i.i.d. first-passage percolation in $\mathbb{Z}^d$ for all $d \geq 2$ and general edge-weights $(t_e)$. Precisely, under an exponential moment assumption $(\mathbb{E}e^{\alpha t_e}<\infty$ for some $\alpha>0$) on the edge-weight distribution, we prove the inequality 
\[
\mathbb{P}\left(|T(0,x)-\mathbb{E}T(0,x)| \geq \lambda \sqrt{\frac{\|x\|_1}{\log\|x\|_1}}\right) \leq ce^{-c'\lambda}, ~\|x\|_1 >1
\] 
for the point-to-point passage time $T(0,x)$. Under a weaker assumption $\mathbb{E}t_e^2(\log t_e)_+<\infty$ we show a corresponding inequality for the lower-tail of the distribution of $T(0,x)$. These results extend work of Bena\"im-Rossignol \cite{BR} to general distributions.}

\section{Introduction}

\subsection{The model}

Let $(t_e)_{e \in \mathcal{E}^d}$ be a collection of non-negative random variables indexed by the nearest-neighbor edges $\mathcal{E}^d$ of $\mathbb{Z}^d$. For $x,y \in \mathbb{Z}^d$, define the passage time
\[
T(x,y) = \inf_{\gamma : x \to y} T(\gamma)\ ,
\]
where $T(\gamma) = \sum_{e \in \gamma} t_e$ and the infimum is over all lattice paths $\gamma$ from $x$ to $y$. $T$ defines a pseudo-metric on $\mathbb{Z}^d$ and First-Passage Percolation is the study of the asymptotic properties of $T$. The model was introduced by Hammersley and Welsh \cite{HW} in 1965 and has recently been the object of much rigorous mathematical progress (see \cite{BS, GK, Howard} for recent surveys).

Under minimal assumptions on $(t_e)$, the passage time $T(0,x)$ is asymptotically linear in $x$, but the lower order behavior has resisted precise quantification. If $d=1$, the passage time is a sum of i.i.d. variables, so its fluctuations are diffusive, giving $\chi = 1/2$, where $\chi$ is the (dimension-dependent) conjectured exponent given roughly by $\Var T(0,x) \asymp \|x\|_1^{2\chi}$. For $d=2$, the minimization in the definition of $T$ is expected \cite{HH} to create subdiffusive fluctuations, with a predicted value $\chi = 1/3$. Subdiffusive behavior is expected in higher dimensions as well.

In this paper, we prove an exponential version of subdiffusive fluctuations for $T(0,x)$ under minimal assumptions on the law of $(t_e)$. This result follows up on work done by the authors (extending the work of \cite{BKS, BR}) in \cite{DHS}, in which it was shown that 
\begin{equation}\label{varb}
\Var T(0,x) \leq C \frac{\|x\|_1}{\log \|x\|_1} \text{ for } \|x\|_1>1
\end{equation}
given only that the distribution of $t_e$ has $2+\log$ moments. Our concentration inequalities apply to a nearly optimal class of distributions: for the upper tail inequality in \eqref{eq: main_thm} we require that $t_e$ has exponential moments and for the lower tail inequality \eqref{eq: main_thm_2}, that $t_e$ has $2+\log$ moments. In contrast to existing work on subdiffusive concentration listed below, our methods do not rely on any properties of the distribution other than the tail behavior.

In 1993, Kesten \cite[Eq.~(1.15)]{kestenspeed} gave the first exponential concentration inequality for $T$, showing that if $\mathbb{E}e^{\alpha t_e}<\infty$ for some $\alpha>0$, then one has
\[
\mathbb{P}\left(|T(0,x) - \mathbb{E}T(0,x)| \geq \lambda \sqrt{\|x\|_1} \right) \leq c e^{-c'\lambda} \text{ for } \lambda \leq c'' \|x\|_1\ .
\]
This was improved by Talagrand \cite[Eq.~(8.31)]{talagrand} to Gaussian concentration under the same moment assumption. We provide an alternative derivation of Talagrand's result based on the entropy method in Section \ref{sec: talagrand} of the Appendix.

In the influential work of Benjamini-Kalai-Schramm \cite{BKS}, an inequality of Talagrand for functions of Bernoulli random variables \cite[Theorem 1.5]{talagrand-russo} was used to derive a sublinear bound \eqref{varb} when $\mu$ is supported on two positive values $a$ and $b$. Although the mechanism exploited in \cite{BKS} (small \emph{influences} of the edge variables) did not appear to depend on the edge-weight distribution, the inequality in \cite{talagrand-russo} is specific to the case of Bernoulli variables (Gaussian versions have also been derived, see \cite{bobkov-houdre} and \cite[Theorem 5.1]{chatterjee}). The problem of extending the result to more general distributions was posed in the introduction of \cite{BKS}. There, the authors highlight the connections with hypercontractivity and the Bonami-Beckner inequality, suggesting that these might be useful in generalizing their theorem. An alternative interpretation of the argument of Benjamini-Kalai-Schramm was given in \cite[Theorem VII.1]{garbansteif}. See also \cite[Section 5.5]{chatterjee} for a simple proof of a result analogous to that of Benaim-Rossignol.

Hypercontractivity is well-known to be related to log-Sobolev inequalities \cite{gross}. Bena\"im and Rossignol \cite{BR} developed a ``modified Poincar\'e'' inequality analogous to that of Talagrand for distributions satisfying a log-Sobolev inequality. They applied this to prove a subdiffusive concentration inequality of the type \eqref{eq: main_thm} for a class of distributions that they called \emph{nearly Gamma}. These are continuous distributions that satisfy an entropy bound analogous to the logarithmic Sobolev inequality for the gamma distribution: for nearly gamma $\mu$ and, for simplicity, $f$ smooth,
\[
Ent_\mu~f^2 := \int f^2(x) \log \frac{f^2(x)}{\mathbb{E}_\mu f^2} ~\mu(\text{d}x) \leq C \int (\sqrt x f'(x))^2~\mu(\text{d}x)\ .
\]
Although the nearly Gamma class includes, for example, gamma, uniform and distributions with smooth positive density on an interval, it excludes all power law distributions, those with unbounded support which decay too quickly, those with zeros on its support and all noncontinuous distributions.

The main goal of this paper is to prove subdiffusive concentration inequalities without the nearly Gamma assumption. In a previous paper \cite{DHS}, the authors proved that the variance bound \eqref{varb} holds for general distributions under a weak moment assumption. This shows in particular that the argument of Benjamini, Kalai and Schramm based on small influences of the edge variables does not depend on delicate features of the distribution such as the existence of a log-Sobolev inequality or the nearly Gamma property. The contribution of the current work is to explain  how the ideas in \cite{DHS} can be combined with the ``entropy method'' of Ledoux \cite{ledoux}, as further developed in particular by Boucheron-Lugosi-Massart \cite{BLM}, to yield exponential concentration.

We describe the technical features of the current paper in more detail in Section \ref{sec: technic}. Before ending the current section, let us make a comment regarding the scope of our method. Given the wide applicability of the entropy method, and the presentation in \cite{BR}, one can wonder whether a sublinear variance bound \`a la Talagrand, or a concentration result might hold for general functions of random variables with ``small influences.'' We do not derive such a result. Ours is genuinely a first-passage percolation result, and we use the structure of the model in an essential way in several places, notably in the computation of the linearization of the passage time (Lemma \ref{lem: DHS_lem}), and the lattice animal argument in Section~\ref{sec: animals}.
\subsection{Main result}
The main assumptions are as follows: $\mathbb{P}$, the distribution of $(t_e)$, is a product measure on $\Omega=[0,\infty)^{\mathcal{E}^d}$ with marginal $\mu$ satisfying
\begin{equation}\label{eq: percolation_assumption}
\mu(\{0\})< p_c\ ,
\end{equation}
where $p_c$ is the critical probability for $d$-dimensional bond percolation.  Furthermore, we will generally assume either
\begin{equation}\label{eq: exponential_moments}
\mathbb{E}e^{2\alpha t_e}<\infty \text{ for some } \alpha>0
\end{equation}
or
\begin{equation}\label{eq: two_moments}
\mathbb{E}t_e^2(\log t_e)_+ < \infty\ .
\end{equation}

\begin{thm}\label{thm: main_thm}
Let $d\geq 2$. Under \eqref{eq: percolation_assumption} and \eqref{eq: exponential_moments}, there exist $c_1,c_2>0$ such that for all $x \in \mathbb{Z}^d$ with $\|x\|_1 > 1$,
\begin{equation}\label{eq: main_thm}
\mathbb{P}\left( |T(0,x) - \mathbb{E}T(0,x)| \geq \frac{\|x\|_1^{1/2}}{(\log \|x\|_1)^{1/2}} \lambda \right) \leq c_1e^{-c_2 \lambda} \text{ for } \lambda \geq 0\ .
\end{equation}
Assuming \eqref{eq: percolation_assumption} and \eqref{eq: two_moments}, there exist $c_1,c_2>0$ such that for all $x \in \mathbb{Z}^d$ with $\|x\|_1 > 1$,
\begin{equation}\label{eq: main_thm_2}
\mathbb{P}\left( T(0,x) - \mathbb{E}T(0,x) \leq -\frac{\|x\|_1^{1/2}}{(\log \|x\|_1)^{1/2}} \lambda \right) \leq c_1e^{-c_2 \lambda} \text{ for } \lambda \geq 0\ .
\end{equation}
\end{thm}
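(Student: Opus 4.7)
The plan is to apply the entropy method of Ledoux and Boucheron-Lugosi-Massart \cite{BLM} to an averaged version of the passage time, combining it with the lattice-animal machinery used in \cite{DHS} to prove the variance bound \eqref{varb}. The Talagrand-type bound re-derived in Section \ref{sec: talagrand} already yields exponential concentration on the scale $\sqrt{\|x\|_1}$ under \eqref{eq: exponential_moments}; the remaining task is to gain the extra $(\log\|x\|_1)^{-1/2}$ factor, which should come from the same endpoint-averaging mechanism that produced the sublinear variance bound in \cite{DHS}.

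First I would replace $T(0,x)$ by the averaged quantity $F = |B|^{-1}\sum_{z\in B}T(z,z+x)$, where $B$ is a cube of side $\kappa(\log\|x\|_1)^{1/d}$ for an appropriate constant $\kappa$. The Talagrand-type inequality controls $F - T(0,x)$ on the scale $\sqrt{\log\|x\|_1}$ (and the means differ by $O(1)$ by approximate linearity of $\mathbb{E}T(0,\cdot)$), so it suffices to prove \eqref{eq: main_thm} for $F$. I would then truncate each $t_e$ at a threshold $M_\tau\sim\alpha^{-1}\log\|x\|_1$, writing $F=F_K+R$, and control $R$ using \eqref{eq: exponential_moments} directly. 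To the bounded functional $F_K$ I would apply a modified log-Sobolev/BLM entropy inequality for bounded coordinates to $e^{\tau F_K/2}$, producing a differential inequality of the form
\bea
\tau\,\psi'(\tau) - \psi(\tau) \;\le\; C M_\tau^2\,\tau^2\,\frac{\mathbb{E}\!\left[\mathcal V(F_K)\,e^{\tau(F_K-\mathbb{E}F_K)}\right]}{\mathbb{E}e^{\tau(F_K-\mathbb{E}F_K)}},
\eea
where $\psi(\tau)=\log\mathbb{E}e^{\tau(F_K-\mathbb{E}F_K)}$ and $\mathcal V(F_K)=\sum_e(F_K-F_K^{(e)})^2$ is the sum of squared edge-resampling differences.

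The core step, and the main obstacle, is the estimate $\mathcal V(F_K)\le C\|x\|_1/\log\|x\|_1$ in a sense strong enough to feed into Herbst's argument. Using the linearization formula for averaged passage times (Lemma \ref{lem: DHS_lem}), the discrete gradient of $F_K$ in edge $e$ is essentially $t_e$ times the fraction $p_e$ of averaged geodesics passing through $e$. The averaging forces typical $p_e$ to be of order $1/\log\|x\|_1$, while the lattice-animal argument of Section \ref{sec: animals} rules out configurations in which $\sum_e t_e^2 p_e$ is much larger than this heuristic predicts. Plugging the resulting bound on $\mathcal V(F_K)$ back into the differential inequality and running Herbst's argument yields $\psi(\tau)\le C\tau^2\|x\|_1/\log\|x\|_1$ on the relevant range of $\tau$, and Markov's inequality gives \eqref{eq: main_thm}. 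For the lower tail under the weaker hypothesis \eqref{eq: two_moments}, I would use the one-sided version of the BLM inequality that only sees the positive increments $(F_K-F_K^{(e)})_+$; since raising a single edge weight cannot decrease $T$, arbitrarily large edges only hurt the upper tail, and the moment condition \eqref{eq: two_moments} is just enough to absorb the truncation error, yielding \eqref{eq: main_thm_2}. The most delicate point throughout is balancing the truncation level $M_\tau$ against the log-averaging and the allowed range of $\tau$, so that every error term stays below the target $\|x\|_1/\log\|x\|_1$ rather than reverting to the trivial linear scale.
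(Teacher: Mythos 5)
Your plan correctly identifies the high-level ingredients (endpoint averaging, entropy method, lattice animals, the linearization Lemma~\ref{lem: DHS_lem}), but it departs from the paper in a way that introduces a genuine gap, not merely a different route.

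The critical step you flag as ``the main obstacle'' --- proving
$\mathcal V(F_K)=\sum_e(F_K-F_K^{(e)})^2\le C\|x\|_1/\log\|x\|_1$ ---
is not something the paper proves, and there is good reason to doubt it holds. What the paper actually establishes (Theorems~\ref{thm: next_step}, \ref{thm: BLM_exponential}, \ref{thm: BLM_two}) is an entropy bound that is \emph{linear} in $\|x\|_1$:
\[
\sum_k Ent(V_k^2)\;\le\;C\lambda^2\|x\|_1\,\mathbb{E}e^{2\lambda F_m},
\qquad
Ent\,e^{\lambda F_m}\;\le\;C\lambda^2\|x\|_1\,\mathbb{E}e^{\lambda F_m}.
\]
Feeding a linear entropy bound into Herbst's argument only gives Gaussian concentration at scale $\sqrt{\|x\|_1}$ --- this is precisely the re-derivation of Talagrand's inequality in Section~\ref{sec: talagrand}. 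The logarithmic gain in the paper does \emph{not} come from a sublinear entropy or a sublinear sum of squared edge-derivatives; it comes from the self-referential Falik--Samorodnitsky inequality (Lemma~\ref{lem: lower_bound}), which compares $\Var G$ to $\sum_k Ent(V_k^2)$ via a $\log$ of the ratio $\Var G/\sum_k(\mathbb{E}|V_k|)^2$. The paper shows the influence sum $\sum_k(\mathbb{E}|V_k|)^2$ is bounded by $\|x\|_1^{(2+\zeta(1-d))/2}$, polynomially smaller than the $\|x\|_1$ entropy scale, and this polynomial gap is what gets converted into a $\log\|x\|_1$ factor. Your plan discards the FS step in favour of a direct differential inequality, so you lose the mechanism that produces the $\log$.

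Concretely, your heuristic that $p_e\sim 1/\log\|x\|_1$ for typical $e$ implicitly assumes that the geodesics $Geo(z,z+x)$, $z\in B$, are essentially edge-disjoint. This is false if geodesics from nearby endpoints coalesce --- which is expected behavior in FPP and which the paper makes no attempt to rule out. On the coalesced portion one has $p_e\approx 1$, and then $\sum_e t_e^2p_e^2$ is comparable to $\sum_{e\in Geo}t_e^2\asymp\|x\|_1$ with no logarithmic gain at all. The geodesic-length bound Lemma~\ref{lem: geodesic_length} only controls $\mathbb{E}p_e$, not $\mathbb{E}p_e^2$, so the first-moment information is insufficient.

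A secondary issue is your choice of averaging cube of side $(\log\|x\|_1)^{1/d}$. For the FS mechanism (which is what you would actually need) the cube must have polynomial side length in $\|x\|_1$: with $m\sim\|x\|_1^\zeta$, the paper gets $\sum_k(\mathbb{E}|V_k|)^2\lesssim\|x\|_1^{1-\zeta(d-1)/2}$, and the polynomial gap over $\|x\|_1$ is what makes $\log[\Var/\sum(\mathbb{E}|V_k|)^2]\gtrsim\log\|x\|_1$. With a polylogarithmic cube the gap is only polylogarithmic, so the FS logarithm is $O(\log\log\|x\|_1)$, far too weak. The paper carefully takes $\zeta=(4d)^{-1}$, small enough that the approximation error $|T(0,x)-F_m|$ (of order $\|x\|_1^{d\zeta}$, not $\sqrt{\log\|x\|_1}$) still fits under the target scale $\sqrt{\|x\|_1/\log\|x\|_1}$, yet large enough for the FS argument. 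The truncation at $M_\tau\sim\log\|x\|_1$ you propose introduces an extra $M_\tau^2\sim(\log\|x\|_1)^2$ factor that has no place to be absorbed; the paper avoids this by running the lattice-animal argument on $w_e=1-\log F(t_e)$ and using Proposition~\ref{prop: variation} to decouple, rather than truncating.

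In summary: to close the gap you would need to replace the direct Herbst step with the Falik--Samorodnitsky variance/entropy bootstrap and, correspondingly, prove the two separate bounds --- linear entropy, and polynomially sublinear influences --- instead of a single sublinear bound on $\mathcal V(F_K)$ that is not known to hold.
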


\begin{remark}
If \eqref{eq: percolation_assumption} fails, then $T(0,x)$ itself is sublinear in $x$ and the model has a different character (see \cite[Theorem~6.1]{kesten} and \cite{chayes, zhang} for more details). Because $T(0,x)$ is bounded below by the minimum of the $2d$ weights of edges adjacent to 0, it is necessary to assume \eqref{eq: exponential_moments} to obtain an upper-tail exponential concentration inequality. For the lower tail, our methods require $2+\log$ moments and this is the same assumption made in \cite{DHS} for a sublinear variance bound. 
\end{remark}

\subsection{Outline of the proof}
\label{sec: technic}
The strategy of the proof is to use a relation, stated in Lemma~\ref{lem: iteration}, between bounds on $\Var e^{\lambda T(0,x)}$ and exponential concentration. To obtain the required variance bound (Theorem~\ref{thm: F_m_variance_bound}), we apply the Falik-Samorodnisky inequality (Lemma~\ref{lem: lower_bound}) to a martingale decomposition of the variable $e^{\lambda F_m}$, where $F_m$ is an averaged version of the passage time. This approach was first taken by Bena\"im and Rossignol in \cite{BR}. The tails of the true passage time $T(0,x)$ can be estimated from those of $F_m$, although the methods are different for the upper and lower tail (Section \ref{sec: reduction}). 

We represent the passage times as a push-foward of Bernoulli sequences (Section \ref{sec: benc}) and the bound follows after a careful analysis of discrete derivatives resulting from an application of the log-Sobolev inequality for Bernoulli variables. This analysis, presented in Sections 4.2 and 4.3, is the central part of the argument. As in \cite[Proposition 6.4]{DHS}, an important tool in understanding the discrete derivatives is the linearization of the passage time, Lemma~\ref{lem: DHS_lem}. One of the complications we must address is dealing with the function $x\mapsto e^{\lambda x}$  applied to the passage time, rather than the passage time itself. In particular, different arguments are required for the cases $\lambda \ge 0$ and $\lambda < 0$.

Once we have obtained the estimates in Theorems \ref{prop: intermediate_1} and \ref{prop: intermediate} for the entropy sums, it remains to bound the quantities
\[\sum_{e\in Geo(z,x+z)}(1-\log F(t_e)),\]
and decouple them from the exponential terms to apply the iteration idea contained in Lemma \ref{lem: iteration}. Here $F$ is the distribution function of $\mu$.
To accomplish this, we use greedy lattice animals (Section~\ref{sec: animals}). This was one of the new ideas introduced in \cite{DHS}. The treatment here, based on the observation contained in Proposition~\ref{prop:expdist}, is considerably simpler than that in \cite[Section 6.2.3]{DHS}. Even so, the case $\lambda < 0$ requires a separate argument, using an idea from \cite{DK}.

\subsection{Preliminary results}

We will need a couple of results on the length of geodesics. By Proposition~\ref{prop: kesten_length} below, condition \eqref{eq: percolation_assumption} ensures that
\begin{equation}\label{eq: geodesics}
\mathbb{P}(\exists \text{ a geodesic from } x \text{ to } y) = 1 \text{ for all } x,y \in \mathbb{Z}^d\ ,
\end{equation}
where a geodesic is a path $\gamma$ from $x$ to $y$ that has $T(\gamma) = T(x,y)$.

The fundamental estimate is from Kesten \cite[Proposition~5.8]{kesten}.
\begin{prop}[Kesten]\label{prop: kesten_length}
Assuming \eqref{eq: percolation_assumption}, there exist $a,\newconstant\label{C_1}>0$ such that for all $n \in \mathbb{N}$,
\begin{equation}\label{eq: kesten_length}
\mathbb{P}\bigg( \exists \text{ self-avoiding } \gamma \text{ containing }0 \text{ with } \#\gamma \geq n \text{ but } T(\gamma) < an \bigg) \leq e^{-\oldconstant{C_1}n}\ .
\end{equation}
\end{prop}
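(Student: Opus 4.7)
The plan is to reduce the estimate to a statement about subcritical Bernoulli bond percolation on $\Z^d$. Since $\mu(\{0\}) < p_c$ and the distribution function $F$ of $t_e$ is right-continuous at $0$, we can pick a threshold $\delta>0$ small enough that $p := \mathbb{P}(t_e < \delta) < p_c$. Declare an edge $e$ \emph{small} if $t_e < \delta$ and \emph{large} otherwise. The small edges form a Bernoulli bond percolation of parameter $p < p_c$, and we choose $a = \delta/2$.

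Next I would extract the combinatorial consequence of the passage-time bound. If $\gamma$ is self-avoiding with $\#\gamma = m \geq n$ and $T(\gamma) < an$, then its number of large edges $N_\ell(\gamma)$ satisfies $\delta\, N_\ell(\gamma) \leq T(\gamma) < an = \delta n /2$, so $N_\ell(\gamma) < n/2 \leq m/2$; hence at least $m/2$ edges of $\gamma$ are small. So it suffices to control
\[
\mathbb{P}\bigl(\exists \text{ self-avoiding } \gamma \ni 0,\, \#\gamma = m,\, \text{at least } m/2 \text{ edges of } \gamma \text{ are small}\bigr)
\]
for each $m \geq n$, and sum over $m$.

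The main obstacle is obtaining exponential decay in $m$ for the displayed probability for the full subcritical range $p < p_c$, rather than only for $p$ small enough that a direct union bound $m(2d)^m \binom{m}{m/2} p^{m/2}$ is summable. This is where the strength of subcritical percolation theory must enter: one exploits the exponential decay of cluster radii (which holds throughout the subcritical phase, e.g. by Aizenman--Barsky--Menshikov) to perform a renormalization. Divide $\Z^d$ into boxes of a large side-length $L$ and declare a box \emph{bad} if it touches a small-cluster of diameter $\geq L/2$. For $L$ sufficiently large, the bad boxes form a subcritical site percolation with density $\varepsilon(L) \to 0$, so the naive union bound can be applied at the renormalized scale. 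A self-avoiding path of length $m$ with at least $m/2$ small edges must traverse at least a positive fraction of bad boxes proportional to $m/L$, after which a Peierls-type sum over the skeleton of traversed boxes at scale $L$ gives the required $e^{-C m}$ bound.

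Summing this bound over $m \geq n$ produces the exponential estimate in \eqref{eq: kesten_length}. Apart from the renormalization step, everything is combinatorics and a Chernoff-type estimate on the number of small edges along a fixed path; the renormalization and the invocation of Aizenman--Barsky--Menshikov (or equivalently Menshikov's theorem on sharpness of the phase transition) is the single nontrivial input, and is precisely the content of Kesten's original argument.
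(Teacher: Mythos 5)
The paper does not prove this statement: it is quoted verbatim from Kesten's Saint-Flour notes (his Proposition~5.8), and the authors simply cite it. Your proposal is therefore attempting to reconstruct Kesten's argument. The first half of your reduction is correct and standard: choose $\delta>0$ with $p := \mathbb{P}(t_e<\delta) < p_c$ (this uses $\mu(\{0\})<p_c$ and right-continuity of $F$), set $a=\delta/2$, and deduce that any self-avoiding $\gamma$ with $\#\gamma=m\geq n$ and $T(\gamma)<an$ has fewer than $n/2$ ``large'' edges. You then relax this to fewer than $m/2$ large edges, i.e.\ at least half the edges are small, and attempt a coarse-graining.

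The gap is in the coarse-graining step. The claim that ``a self-avoiding path of length $m$ with at least $m/2$ small edges must traverse a positive fraction $\sim m/L$ of bad boxes'' is not true as stated. A path can alternate small and large edges, producing $m/2$ small edges each lying in a singleton small-cluster; in that configuration every box is good (no small-cluster has diameter $\geq L/2$), yet the path has half its edges small. Having many small edges on $\gamma$ does not force $\gamma$ near any large small-cluster, so the renormalized union bound you invoke has nothing to sum over. This is exactly why a naive Chernoff-plus-path-counting bound ($m(2d)^m \cdot \mathbb{P}(\mathrm{Bin}(m,p)\geq m/2)$) already fails for $p$ near $p_c$, and the coarse-grained version of that bound inherits the same problem if the reduction to bad boxes is not forced combinatorially.

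The missing idea is to exploit the \emph{stronger} bound you actually derived, $N_\ell(\gamma)<n/2$, rather than the weakened $N_\ell(\gamma)<m/2$. With at most $n/2$ large edges, the self-avoiding path of length $m\geq n$ is cut by the large edges into at most $n/2+1$ maximal runs of consecutive small edges, and the vertices of $\gamma$ are distributed over at most $n/2+1$ distinct open clusters of the subcritical (small-edge) percolation. This is what forces, as $m$ grows past $n$, the existence of a large open cluster near the origin, and it is at this point that exponential decay of subcritical cluster radii/volumes (Menshikov/Aizenman--Barsky, which you correctly identify as the essential probabilistic input) enters. Running that pigeonhole carefully, together with a separate estimate for $m$ comparable to $n$, is the actual content of Kesten's proof. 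As written, your sketch identifies the right ingredients (threshold $\delta$, subcriticality, exponential decay, scale-$L$ coarse graining) but the link between ``many small edges'' and ``many bad boxes'' does not hold, so the Peierls sum is not justified.
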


As a consequence, we state a bound used in work of one of the authors and N. Kubota \cite{DK}. For this, let $G(0,x)$ be the maximal number of edges in any self-avoiding geodesic from $0$ to $x$. An application of Borel-Cantelli to \eqref{eq: kesten_length} implies
\begin{equation}\label{eq: lower_comparability}
\text{under \eqref{eq: percolation_assumption}, }\liminf_{\|x\|_1 \to \infty} \frac{T(0,x)}{\|x\|_1} \geq a > 0 \text{ almost surely}
\end{equation}
and so $G(0,x)$ is finite almost surely. 
\begin{prop}\label{prop: kubota}
Assume \eqref{eq: percolation_assumption} and let $a$ be from Proposition~\ref{prop: kesten_length}. There exists $\newconstant\label{C_2}$ such that the variable
\[
Y_x = G(0,x) \mathbf{1}_{\{T(0,x) < aG(0,x)\}}
\]
satisfies $\mathbb{P}(Y_x \geq n) \leq e^{-\oldconstant{C_2} n}$ for all $x \in \mathbb{Z}^d$ and $n \in \mathbb{N}$.
\end{prop}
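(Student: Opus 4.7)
The plan is to reduce the event $\{Y_x \geq n\}$ directly to the event controlled by Proposition~\ref{prop: kesten_length} via a union bound over the possible lengths of the longest self-avoiding geodesic from $0$ to $x$. By \eqref{eq: geodesics}, on $\{Y_x \geq n\}$ one may choose a self-avoiding geodesic $\gamma_*$ from $0$ to $x$ with $\#\gamma_* = G(0,x) \geq n$ and $T(\gamma_*) = T(0,x) < a\,G(0,x) = a\#\gamma_*$. In particular, $\gamma_*$ is a self-avoiding path containing $0$ whose edge count $N := \#\gamma_*$ is at least $n$ and whose passage time satisfies $T(\gamma_*) < aN$.

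Consequently, writing
\[
E_N = \bigl\{\exists\text{ self-avoiding }\gamma \text{ containing } 0 \text{ with } \#\gamma \geq N \text{ and } T(\gamma) < aN\bigr\},
\]
the inclusion $\{Y_x \geq n\} \subseteq \bigcup_{N \geq n} E_N$ holds. Proposition~\ref{prop: kesten_length} gives $\mathbb{P}(E_N) \leq e^{-\oldconstant{C_1} N}$ for each $N$, and summing the resulting geometric series,
\[
\mathbb{P}(Y_x \geq n) \leq \sum_{N \geq n} e^{-\oldconstant{C_1} N} \leq \frac{e^{-\oldconstant{C_1} n}}{1-e^{-\oldconstant{C_1}}},
\]
which, after absorbing the prefactor into a slightly smaller constant $\oldconstant{C_2} < \oldconstant{C_1}$ (valid uniformly for $n \geq 1$), yields the claimed bound $\mathbb{P}(Y_x \geq n) \leq e^{-\oldconstant{C_2} n}$.

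There is no substantive obstacle here. The only point that requires a moment's care is that the event $\{Y_x \geq n\}$ directly supplies the bound $T(\gamma_*) < a \# \gamma_*$, not $T(\gamma_*) < an$; one cannot simply quote Proposition~\ref{prop: kesten_length} at level $n$. Splitting according to the actual value $N = \#\gamma_*$ converts the ``path-dependent'' inequality into the correct form ``$T(\gamma) < aN$'' at each level $N \geq n$, and the exponential decay of Kesten's bound in $N$ is fast enough that the union over $N \geq n$ costs only a constant factor, which is absorbed into $\oldconstant{C_2}$. Note that the estimate is uniform in $x$ since Proposition~\ref{prop: kesten_length} is itself stated uniformly (it concerns any self-avoiding path through the origin, with no fixed endpoint).
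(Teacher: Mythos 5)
Your proof is correct and takes essentially the same route as the paper: both arguments reduce $\{Y_x \geq n\}$ to the event in Proposition~\ref{prop: kesten_length} by conditioning on the actual length $N = \#\gamma_*$ of the longest self-avoiding geodesic and summing Kesten's estimate over $N \geq n$. The paper packages this summation into an auxiliary event $A_m$ (which coincides with your $\bigcup_{N \geq m} E_N$) before observing $\{Y_x \geq n\} \subseteq A_n$, but the content is identical.
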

\begin{proof}
Defining
\[
A_m = \bigg\{ \exists \text{ self-avoiding } \gamma \text{ from 0 with }\#\gamma \geq m \text{ but } T(\gamma) < a\#\gamma \bigg\}
\]
and summing \eqref{eq: kesten_length} over $n$, one has, for some $\oldconstant{C_2}>0$,
\begin{equation}\label{eq: sum_bound}
\mathbb{P}(A_m) \leq e^{-\oldconstant{C_2}m} \text{ for all } m \in \mathbb{N}\ .
\end{equation}
For $x \in \mathbb{Z}^d$, assume $Y_x \geq n \geq 1$ and let $\gamma$ be any self-avoiding geodesic from $0$ to $x$ with length $G(0,x) = Y_x$. Then because $Y_x \neq 0$, $G(0,x) > (1/a)T(0,x)$ and so 
\[
T(\gamma) = T(0,x) < aG(0,x) = a\#\gamma\ ,
\]
with $\#\gamma \geq n$. So $A_n$ occurs and \eqref{eq: sum_bound} completes the proof.
\end{proof}

We can state a couple of relevant consequences of this proposition. 
\begin{cor}\label{cor: exponential_length_cor}
Assume \eqref{eq: percolation_assumption}.
\begin{enumerate}
\item There exists $\newconstant\label{C_3}$ such that
\begin{equation}\label{eq: two_length_bound}
\mathbb{E}G(0,x)^2 \leq \oldconstant{C_3}\mathbb{E}T(0,x)^2 \text{ for all } x \in \mathbb{Z}^d\ .
\end{equation}
\item Under \eqref{eq: exponential_moments}, there exists $\alpha_1>0$ such that
\begin{equation}\label{eq: exponential_length_bound}
\sup_{0 \neq x \in \mathbb{Z}^d} \frac{\log \mathbb{E} e^{\alpha_1 G(0,x)}}{\|x\|_1} < \infty\ .
\end{equation}
\end{enumerate}
\end{cor}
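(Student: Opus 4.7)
Both parts rest on a single pointwise inequality coming directly from the definition of $Y_x$ in Proposition~\ref{prop: kubota}. On the event $\{T(0,x) \geq aG(0,x)\}$ the indicator defining $Y_x$ vanishes and $G(0,x) \leq T(0,x)/a$; on its complement $G(0,x) = Y_x$. Combining,
\[
G(0,x) \leq Y_x + \frac{T(0,x)}{a} \quad \text{almost surely, for every } x \in \mathbb{Z}^d.
\]
Everything else is extraction: for part~(1) one squares this inequality and for part~(2) one exponentiates it, and the uniform exponential tail of $Y_x$ furnished by Proposition~\ref{prop: kubota} handles the $Y_x$-term in each case.

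For part~(1), I would use $(u+v)^2 \leq 2u^2 + 2v^2$ and take expectations, obtaining
\[
\mathbb{E}G(0,x)^2 \leq 2\mathbb{E}Y_x^2 + \frac{2}{a^2}\mathbb{E}T(0,x)^2.
\]
The tail bound $\mathbb{P}(Y_x \geq n) \leq e^{-\oldconstant{C_2}n}$ gives $\mathbb{E}Y_x^2 \leq C_Y$ independently of $x$, so to absorb the $2C_Y$ into a multiple of $\mathbb{E}T(0,x)^2$ it suffices to show $\inf_{x \neq 0}\mathbb{E}T(0,x)^2 > 0$. For $\|x\|_1$ large this follows from \eqref{eq: lower_comparability}: applying Borel--Cantelli as there, $T(0,x) \geq (a/2)\|x\|_1$ with probability tending to $1$, so $\mathbb{E}T(0,x)^2 \gtrsim \|x\|_1^2$. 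For the finitely many remaining $x \neq 0$, assumption \eqref{eq: percolation_assumption} makes the cluster of $0$-weight edges containing the origin a.s.\ finite, whence $\mathbb{P}(T(0,x) > 0) > 0$ and hence $\mathbb{E}T(0,x)^2 > 0$.

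For part~(2), I would exponentiate and apply Cauchy--Schwarz:
\[
\mathbb{E}e^{\alpha_1 G(0,x)} \leq \bigl(\mathbb{E}e^{2\alpha_1 Y_x}\bigr)^{1/2}\bigl(\mathbb{E}e^{2\alpha_1 T(0,x)/a}\bigr)^{1/2}.
\]
Choose $\alpha_1 \in (0, \min(a\alpha, \oldconstant{C_2}/2))$, where $\alpha$ is from \eqref{eq: exponential_moments}. Then $2\alpha_1 < \oldconstant{C_2}$ makes the first factor uniformly bounded by integrating the tail of $Y_x$. For the second factor, bound $T(0,x)$ pointwise by $\sum_{e \in \pi} t_e$ where $\pi$ is any fixed monotone lattice path from $0$ to $x$ of length $\|x\|_1$; independence of edge weights then yields
\[
\mathbb{E}e^{2\alpha_1 T(0,x)/a} \leq \bigl(\mathbb{E}e^{2\alpha_1 t_e/a}\bigr)^{\|x\|_1},
\]
and the base is finite because $2\alpha_1/a \leq 2\alpha$. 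Taking logarithms and dividing by $\|x\|_1$ gives the required uniform bound.

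The only step that is not entirely mechanical is establishing $\inf_{x\neq 0}\mathbb{E}T(0,x)^2 > 0$ in part~(1); the large-$\|x\|_1$ and small-$\|x\|_1$ regimes must be handled separately, but neither is deep. The rest is purely algebraic consequences of Proposition~\ref{prop: kubota} combined with a direct path-based upper bound on $T(0,x)$.
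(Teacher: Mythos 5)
Your proof is correct and follows essentially the same route as the paper's: both compare $G(0,x)$ to $T(0,x)$ via the variable $Y_x$ of Proposition~\ref{prop: kubota} (the paper uses the slightly sharper pointwise split $G(0,x)^2 \leq a^{-2}T(0,x)^2 + Y_x^2$ and $e^{\beta G(0,x)} \leq e^{(\beta/a)T(0,x)} + e^{\beta Y_x}$ in place of your $(u+v)^2$ and Cauchy--Schwarz bounds), and both finish with the deterministic-path estimate for $T(0,x)$. The only small inefficiency is your appeal to finiteness of the $0$-weight cluster for small $x$: it is more direct to note that all $2d$ edges at the origin are positive with probability $(1-\mu(\{0\}))^{2d}>0$, forcing $T(0,x)>0$ on that event.
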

\begin{proof}
Estimate
\[
\mathbb{E}G(0,x)^2 \leq a^{-2} \mathbb{E} T(0,x)^2 + \mathbb{E}Y_x^2\ ,
\]
where $Y_x$ is from Proposition~\ref{prop: kubota}. Because $\mathbb{E}Y_x^2$ is bounded uniformly in $x$, \eqref{eq: lower_comparability} (which gives $\mathbb{E}T(0,x) \to \infty$ as $\|x\|_1 \to \infty$) shows \eqref{eq: two_length_bound}. Assuming \eqref{eq: exponential_moments}, for $\beta>0$,
\[
\mathbb{E}e^{\beta G(0,x)} \leq \mathbb{E}e^{(\beta/a) T(0,x)} + \mathbb{E}e^{\beta Y_x}\ .
\]
For $\beta < \oldconstant{C_2}/2$, the second term is bounded in $x$. On the other hand letting $\gamma_x$ be a deterministic path from $0$ to $x$ of length $\|x\|_1$, the first term is bounded by
\begin{equation}\label{eq: deterministic_bound}
\mathbb{E}e^{(\beta/a)T(\gamma_x)} = \left( \mathbb{E}e^{(\beta/a) t_e} \right)^{\|x\|_1}\ .
\end{equation}
So we conclude for $x \neq 0$ and some $\newconstant\label{C_4} \geq 1$,
\[
\frac{\log \mathbb{E} e^{\beta G(0,x)}}{\|x\|_1} \leq \frac{\log \left( \mathbb{E}e^{(\beta/a)t_e} + \oldconstant{C_4} \right)^{\|x\|_1}}{\|x\|_1} = \log \left(\mathbb{E}e^{(\beta/a)t_e}+\oldconstant{C_4} \right) < \infty
\]
when $\beta < \min \{\oldconstant{C_2}/2, \alpha a\}$, where $\alpha$ is from \eqref{eq: exponential_moments}.
\end{proof}

\[
\text{{\bf For the remainder of the paper we assume \eqref{eq: percolation_assumption}.}}
\]

\section{Setup for the proof}

Instead of showing concentration for $T(0,x)$, we use an idea from \cite{BKS}: to show it for $T(z,z+x)$, where $z$ is a random vertex near the origin. So, given $x \in \mathbb{Z}^d$, fix $\zeta$ with $0 < \zeta < 1/4$ and define
\[
m = \lfloor \|x\|_1^\zeta \rfloor \text{ and } F_m = \frac{1}{\# B_m} \sum_{z \in B_m} T_z\ ,
\]
where $T_z = T(z,z+x)$ and $B_m = \{y \in \mathbb{Z}^d : \|y\|_1 \leq m\}$ (this particular randomization was used by both \cite{AZ} and \cite{Sodin}). For $\lambda \in \mathbb{R}$ we define
\begin{equation}\label{eq: G_def}
G=G_\lambda = e^{\lambda F_m}\ .
\end{equation}
Below are the concentration inequalities for $F_m$ analogous to \eqref{eq: main_thm}. In the next subsection, we will show why they suffice to prove Theorem~\ref{thm: main_thm}. 
\begin{thm}\label{thm: F_m_concentration}
Assuming \eqref{eq: exponential_moments}, there exist $c_1,c_2>0$ such that for all $x \in \mathbb{Z}^d$ with $\|x\|_1 > 1$,
\[
\mathbb{P}\left( |F_m - \mathbb{E}F_m| \geq \frac{\|x\|_1^{1/2}}{(\log \|x\|_1)^{1/2}} \lambda \right) \leq c_1e^{-c_2 \lambda} \text{ for } \lambda \geq 0\ .
\]
Assuming \eqref{eq: two_moments}, there exist $c_1,c_2>0$ such that for all $x \in \mathbb{Z}^d$ with $\|x\|_1 > 1$,
\[
\mathbb{P}\left( F_m - \mathbb{E}F_m < -\frac{\|x\|_1^{1/2}}{(\log \|x\|_1)^{1/2}} \lambda \right) \leq c_1e^{-c_2 \lambda} \text{ for } \lambda \geq 0\ .
\]
\end{thm}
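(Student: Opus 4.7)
My plan is to derive, for $|\lambda| \leq c\sqrt{(\log \|x\|_1)/\|x\|_1}$, a Chernoff-type bound of the form $\mathbb{E} e^{\lambda(F_m - \mathbb{E}F_m)} \leq C$. Markov's inequality at $\lambda_0 = c\sqrt{(\log \|x\|_1)/\|x\|_1}$ then gives, for every $u\geq 0$,
\[
\mathbb{P}\Bigl(F_m - \mathbb{E}F_m \geq u\sqrt{\|x\|_1/\log \|x\|_1}\Bigr) \leq e^{-\lambda_0 u \sqrt{\|x\|_1/\log \|x\|_1}}\cdot C = C e^{-cu},
\]
and the symmetric computation with $-\lambda$ in place of $\lambda$ handles the lower tail. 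To obtain the MGF bound, I would prove a variance inequality of the form
\[
\Var G_{\lambda/2} \leq C\lambda^2 \frac{\|x\|_1}{\log \|x\|_1}\,\mathbb{E} G_\lambda
\]
for $G_\lambda = e^{\lambda F_m}$ in the relevant range of $\lambda$, then bootstrap it through the identity $\Var G_{\lambda/2} = \mathbb{E}G_\lambda - (\mathbb{E}G_{\lambda/2})^2$ using the iteration in Lemma~\ref{lem: iteration} to recover control on $H(\lambda) = \mathbb{E}G_\lambda$.

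The variance inequality is obtained by the Bena\"im-Rossignol entropy strategy applied to $G_\lambda$. First, I represent each $t_e$ as a measurable function of an i.i.d.\ Bernoulli sequence (Section~\ref{sec: benc}), so that $G_\lambda$ becomes a function on a Bernoulli product space. Second, I apply the Falik-Samorodnitsky inequality (Lemma~\ref{lem: lower_bound}) to the martingale decomposition of $G_\lambda$ under this coding, which lower bounds $\Var G_\lambda$ in terms of a sum of entropies of martingale increments. Third, I bound those entropies via the Bonami-Beckner log-Sobolev inequality on $\{0,1\}$, which reduces the problem to controlling sums of squared discrete derivatives of $G_\lambda$ in the underlying Bernoullis. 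The linearization lemma (Lemma~\ref{lem: DHS_lem}), combined with the chain rule for $e^{\lambda F_m}$, identifies such a discrete derivative at an edge $e$ as approximately $\lambda G_\lambda\cdot(\Delta t_e)\cdot \mathbf{1}\{e \in \mathrm{Geo}(z,z+x)\text{ for some }z\in B_m\}/\#B_m$. Summing first over Bernoullis and then over $z \in B_m$ produces, as in \cite{DHS}, a gain of order $1/m \asymp \|x\|_1^{-\zeta}$ from the smoothing, which is what pushes the trivial $\|x\|_1$ bound down to $\|x\|_1/\log\|x\|_1$ after optimizing in $\zeta$ at the end.

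A delicate step that remains is to decouple the geodesic sums of the form $\sum_{e \in \mathrm{Geo}(z,z+x)}(1-\log F(t_e))$ that arise from the Bernoulli log-Sobolev bounds from the factor $G_{2\lambda}$ that multiplies them. For this I would use the greedy lattice animal argument (Section~\ref{sec: animals}): using the exponential tail for $G(0,x)$ provided by Corollary~\ref{cor: exponential_length_cor} to bound the geodesic length, the sum is dominated by the maximum of $\sum_e(1-\log F(t_e))$ over lattice animals of size $\asymp \|x\|_1$, and the observation in Proposition~\ref{prop:expdist} gives tight control of its tails. The main obstacle is that the sign of $\lambda$ forces two essentially different arguments. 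For $\lambda > 0$, $G_\lambda$ is increasing in the weights and the full exponential moment hypothesis \eqref{eq: exponential_moments} lets one separate $G_{2\lambda}$ from the geodesic sum by H\"older without losing integrability. For $\lambda<0$, however, $G_\lambda \cdot t_e$ need not even be integrable under the weaker assumption \eqref{eq: two_moments}; here I would follow the idea from \cite{DK}, using Proposition~\ref{prop: kubota} to substitute the \emph{length} $G(0,z+x)$ of the geodesic for its weight in the key bound, at the price of obtaining only the lower-tail inequality under \eqref{eq: two_moments}. Once the variance inequality is established in both cases, the iteration plus Markov step above finishes the proof.
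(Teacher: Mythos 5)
Your overall plan tracks the paper's argument closely: you reduce to a variance inequality $\Var e^{\lambda F_m/2}\leq K\lambda^2\mathbb{E}e^{\lambda F_m}$ (this is Theorem~\ref{thm: F_m_variance_bound}), prove it via the Falik--Samorodnitsky lower bound, the Bernoulli encoding of the edge weights, the Bonami--Beckner log-Sobolev inequality, the linearization Lemma~\ref{lem: DHS_lem}, and a greedy lattice animal argument, then convert the variance inequality to concentration via the iteration in Lemma~\ref{lem: iteration}. This is exactly the structure of Sections~2--6, and your remark about the asymmetry between $\lambda\geq 0$ and $\lambda\leq 0$ (using $G(0,x)$ or the Kubota-style comparison for the lower tail) also matches the paper.

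There are, however, two concrete gaps in the decoupling step that keep the argument from closing as you've described it. First, you propose to ``separate $G_{2\lambda}$ from the geodesic sum by H\"older,'' but H\"older perturbs the exponent: it gives $\mathbb{E}[e^{2\lambda F_m}Y_{z,x}]\leq(\mathbb{E}e^{2p\lambda F_m})^{1/p}(\mathbb{E}Y_{z,x}^{q})^{1/q}$ for conjugate $p,q>1$, and $(\mathbb{E}e^{2p\lambda F_m})^{1/p}$ cannot be brought back to the $\mathbb{E}e^{\lambda F_m}$-type quantity that the hypothesis of Lemma~\ref{lem: iteration} demands. The paper instead applies the variational (dual) formula for entropy, Proposition~\ref{prop: variation}, which reads $\mathbb{E}[XY]\leq Ent(X)+\mathbb{E}[X]\log\mathbb{E}e^{Y}$; this keeps the exponent in $e^{2\lambda F_m}$ fixed and meshes precisely with the exponential moment you get for the lattice-animal weight from Proposition~\ref{prop:expdist}. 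Second, once you perform that duality step, the right-hand side of the entropy bound acquires a residual term $Ent(e^{2\lambda F_m})$ (see Theorem~\ref{thm: next_step}), and this has to be absorbed by a \emph{separate} estimate of the form $Ent(e^{2\lambda F_m})\leq C\lambda^2\|x\|_1\,\mathbb{E}e^{2\lambda F_m}$ for $\lambda$ in the admissible range. Proving this requires the symmetrized log-Sobolev inequality of Boucheron--Lugosi--Massart plus a second round of the geodesic-length and Kubota arguments; this is the content of the appendix Theorems~\ref{thm: BLM_exponential} and~\ref{thm: BLM_two}, and it is not a formality. Your proposal does not account for this extra entropy bound, so the passage from your entropy estimate to the clean variance inequality needed for Lemma~\ref{lem: iteration} is incomplete.
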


This theorem is a consequence on a bound for $\Var e^{\lambda F_m}$, and this is what we focus on from Section~\ref{sec: influences} onward. The link between a variance bound and concentration is given by the following lemma from \cite[Lemma~4.1]{BR} (which itself is a version of \cite[Corollary~3.2]{ledoux}). We have split the statement from \cite{BR} into upper and lower deviations.
\begin{lma}\label{lem: iteration}
Let $X$ be a random variable and $K>0$. Suppose that 
\[
\Var e^{\lambda X/2} \leq K\lambda^2 \mathbb{E}e^{\lambda X} < \infty \text{ for } 0 \leq \lambda < \frac{1}{2\sqrt K}\ .
\]
Then 
\[
\mathbb{P}\left( X - \mathbb{E}X > t \sqrt{K}\right) \leq 2e^{-t} \text{ for all } t \geq 0\ .
\]
If 
\[
\Var e^{\lambda X/2} \leq K\lambda^2 \mathbb{E}e^{\lambda X} < \infty \text{ for } -\frac{1}{2\sqrt K} < \lambda \leq 0\ ,
\]
then
\[
\mathbb{P}\left( X - \mathbb{E}X < -t \sqrt{K}\right) \leq 2e^{-t} \text{ for all } t \geq 0\ .
\]
\end{lma}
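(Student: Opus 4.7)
The plan is to convert the variance hypothesis into a bound on the moment generating function $L(\lambda) := \mathbb{E}e^{\lambda X}$, then apply Chernoff's inequality. The starting point is the identity
\[
\Var e^{\lambda X/2} = \mathbb{E}e^{\lambda X} - (\mathbb{E}e^{\lambda X/2})^2 = L(\lambda) - L(\lambda/2)^2,
\]
so the hypothesis rearranges as $L(\lambda)(1-K\lambda^2) \leq L(\lambda/2)^2$. Since $K\lambda^2 < 1/4$ on the allowed range, we may take logarithms to obtain the telescoping inequality
\[
\log L(\lambda) \leq 2 \log L(\lambda/2) + \log \frac{1}{1 - K\lambda^2}.
\]

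Next I would iterate this $n$ times, obtaining
\[
\log L(\lambda) \leq 2^n \log L(\lambda/2^n) + \sum_{k=0}^{n-1} 2^k \log \frac{1}{1-K\lambda^2/4^k},
\]
and pass to the limit $n\to\infty$. Because $K\lambda^2/4^k \leq 1/4$, the inequality $\log(1/(1-u)) \leq (4/3)u$ bounds the $k$-th summand by $(4/3) K \lambda^2/2^k$, so the series converges to something of size $CK\lambda^2$ for an absolute $C$. Meanwhile $2^n \log L(\lambda/2^n)\to \lambda\mathbb{E}X$ via the Taylor expansion $\log L(\mu)=\mu\mathbb{E}X+O(\mu^2)$ at $0$. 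This yields
\[
\log L(\lambda) \leq \lambda\mathbb{E}X + CK\lambda^2, \qquad 0\leq\lambda<\tfrac{1}{2\sqrt{K}}.
\]

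The conclusion now follows from the exponential Markov inequality: for any such $\lambda$,
\[
\mathbb{P}(X-\mathbb{E}X>s) \leq e^{-\lambda s}\, L(\lambda)\,e^{-\lambda\mathbb{E}X} \leq e^{-\lambda s + CK\lambda^2}.
\]
Setting $s = t\sqrt{K}$ and choosing $\lambda$ at (or suitably close to) the upper endpoint $1/(2\sqrt{K})$ yields the exponential tail $\mathbb{P}(X-\mathbb{E}X > t\sqrt{K})\leq 2e^{-t}$, with the factor of $2$ absorbing the $e^{CK\lambda^2}$ term. For the lower tail, I would run the same iteration with $\lambda \in (-1/(2\sqrt{K}),0]$, yielding the symmetric bound $\log L(\lambda)\leq \lambda\mathbb{E}X+CK\lambda^2$ on that range, and apply Chernoff to $-(X-\mathbb{E}X)$.

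The main technical point is justifying the limit $2^n\log L(\lambda/2^n)\to\lambda\mathbb{E}X$; this requires that $L$ be twice differentiable at $0^+$ (equivalently, $\mathbb{E}X$ and $\mathbb{E}X^2$ finite), which is guaranteed because the hypothesis gives $\mathbb{E}e^{\lambda X}<\infty$ for $\lambda$ in a one-sided neighborhood of $0$ (and, in the lower-tail case, on the negative side; combined with $X\geq 0$ in applications, this suffices). Everything else is bookkeeping of constants in the geometric sum and the Chernoff optimization.
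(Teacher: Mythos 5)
Your method---telescope the relation $L(\lambda)(1-K\lambda^2) \le L(\lambda/2)^2$ along the dyadic sequence $\lambda/2^n$, identify the limiting linear term $\lambda\mathbb{E}X$, sum the geometric series, and finish with Chernoff---is exactly the standard Ledoux/Bena\"im--Rossignol argument; the paper itself gives no proof of this lemma, citing \cite[Lemma~4.1]{BR} and \cite[Corollary~3.2]{ledoux}, so there is no ``paper proof'' to compare against, and your reconstruction is the right one. Two small remarks on the details: (i) you only need $L$ to be once differentiable from the right at $0$ (i.e.\ $\log L(\mu)=\mu\mathbb{E}X+o(\mu)$), not twice; finiteness of $\mathbb{E}X$ together with the one-sided exponential moment gives this by monotone convergence, and (ii) your bound $-\log(1-u)\le\tfrac43 u$ on $[0,\tfrac14]$ giving $C=8/3$ is fine.

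The genuine gap is in the last line. With $\log L(\lambda)\le \lambda\mathbb{E}X+CK\lambda^2$ valid only on $[0,\tfrac{1}{2\sqrt K})$, setting $s=t\sqrt K$ and $\lambda=\tfrac{1}{2\sqrt K}$ gives a Chernoff exponent $-\lambda s+CK\lambda^2=-\tfrac{t}{2}+\tfrac{C}{4}$, not $-t+\tfrac{C}{4}$. The factor of $2$ does absorb $e^{C/4}=e^{2/3}<2$, but it cannot change the decay rate: your argument proves $\mathbb{P}(X-\mathbb{E}X>t\sqrt K)\le 2e^{-t/2}$, and no choice of $\lambda<\tfrac{1}{2\sqrt K}$ can push the rate to $e^{-t}$, since that would require $\lambda\ge 1/\sqrt K$. (In fact, with the hypothesis only on $[0,\tfrac{1}{2\sqrt K})$ the stated bound $2e^{-t}$ cannot hold in general: a centred Laplace variable with scale $b\approx 1.24\sqrt K$ satisfies the variance inequality on that range, yet its upper tail $\tfrac12 e^{-t\sqrt K/b}\approx\tfrac12 e^{-0.81t}$ exceeds $2e^{-t}$ for large $t$.) This discrepancy is harmless for the paper's Theorem~\ref{thm: main_thm}, where $c_2$ is unspecified, but you should state the conclusion your argument actually yields, $\le 2e^{-t/2}$, rather than $\le 2e^{-t}$.
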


Taking $K = \frac{C \|x\|_1}{\log \|x\|_1}$ for $\|x\|_1 > 1$ and $X=F_m$ in the previous lemma shows that to prove Theorem~\ref{thm: F_m_concentration}, it suffices to show the following variance bound.
\begin{thm}\label{thm: F_m_variance_bound}
Assuming \eqref{eq: exponential_moments}, there exists $\newconstant\label{D_1}>0$ such that 
\begin{equation}\label{eq: variance_1}
\Var e^{\lambda F_m/2} \leq K \lambda^2 \mathbb{E}e^{\lambda F_m} < \infty \text{ for } |\lambda| < \frac{1}{2\sqrt K} \text{ and }\|x\|_1 > 1\ ,
\end{equation}
where $K = \frac{\oldconstant{D_1} \|x\|_1}{\log \|x\|_1}$. Assuming \eqref{eq: two_moments}, there exists $\newconstant\label{D_2}>0$ such that
\begin{equation}\label{eq: variance_2}
\Var e^{\lambda F_m/2} \leq K \lambda^2 \mathbb{E}e^{\lambda F_m} < \infty \text{ for } -\frac{1}{2\sqrt K} < \lambda \leq 0 \text{ and } \|x\|_1 > 1\ ,
\end{equation}
where $K = \frac{\oldconstant{D_2} \|x\|_1}{\log \|x\|_1}$.
\end{thm}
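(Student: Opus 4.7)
The plan is to apply the Falik--Samorodnitsky inequality (Lemma~\ref{lem: lower_bound}) to a martingale decomposition of $G:=e^{\lambda F_m/2}$ relative to the Bernoulli bits encoding the edge weights. Using the encoding of Section~\ref{sec: benc}, every $t_e$ is a function of an i.i.d.\ sequence $(\eta_{e,k})_{k\geq 1}$ of Bernoulli variables; enumerate all such bits as $(\omega_i)_{i\geq 1}$ with filtration $\mathcal{F}_i$, and set $V_i:=\mathbb{E}[G\mid\mathcal{F}_i]-\mathbb{E}[G\mid\mathcal{F}_{i-1}]$, so that $\Var G=\sum_i\mathbb{E}V_i^2$. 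Falik--Samorodnitsky combined with the Bernoulli log-Sobolev inequality then yields
\begin{equation*}
\Var G\cdot\log\frac{\Var G}{\sum_i(\mathbb{E}|V_i|)^2}\;\leq\;\sum_i\operatorname{Ent}(V_i^2)\;\leq\;C\sum_i\mathbb{E}(\Delta_iG)^2,
\end{equation*}
where $\Delta_i$ is the discrete derivative in the $\omega_i$--direction; bounding numerator and denominator separately will deliver the claim.

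For the entropy sum, the linearization lemma (Lemma~\ref{lem: DHS_lem}) reduces each $\Delta_iG$, when $\omega_i$ encodes $t_e$, to a quantity controlled by the indicator that $e$ lies on a geodesic from some $z\in B_m$ to $z+x$; summing over the bits associated to a single edge produces a factor $1-\log F(t_e)$. Theorems~\ref{prop: intermediate_1} and~\ref{prop: intermediate} then give
\begin{equation*}
\sum_i\operatorname{Ent}(V_i^2)\;\leq\;C\lambda^2\,\mathbb{E}\!\left[e^{\lambda F_m}\cdot\frac{1}{\#B_m}\sum_{z\in B_m}\sum_{e\in\text{Geo}(z,z+x)}\bigl(1-\log F(t_e)\bigr)\right],
\end{equation*}
and the greedy lattice animal argument of Section~\ref{sec: animals} (via Proposition~\ref{prop:expdist}) decouples the inner sum from the $e^{\lambda F_m}$ factor, bounding the right--hand side by $C\lambda^2\|x\|_1\,\mathbb{E}e^{\lambda F_m}$. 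The $\lambda>0$ half of the decoupling uses the exponential moment~\eqref{eq: exponential_moments}, while for $\lambda\leq 0$ only~\eqref{eq: two_moments} is needed, via an adaptation of the argument of~\cite{DK}.

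For the denominator, the BKS--style randomization over $z\in B_m$, with $m=\lfloor\|x\|_1^\zeta\rfloor$ and $0<\zeta<1/4$, forces $V_i$ to vanish outside the event that the edge associated with $\omega_i$ lies near $\text{Geo}(z,z+x)$ for some $z\in B_m$, giving $\mathbb{P}(V_i\neq 0)\leq C\|x\|_1^{-d\zeta}$; hence
\begin{equation*}
\sum_i(\mathbb{E}|V_i|)^2\;\leq\;\max_i\mathbb{P}(V_i\neq 0)\cdot\sum_i\mathbb{E}V_i^2\;\leq\;C\|x\|_1^{-d\zeta}\,\Var G,
\end{equation*}
so the logarithm is unconditionally at least $c\log\|x\|_1$. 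Combining the three estimates yields $\Var G\cdot\log\|x\|_1\leq C\lambda^2\|x\|_1\,\mathbb{E}e^{\lambda F_m}$, which is the statement with $K=C\|x\|_1/\log\|x\|_1$.

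The main obstacle is the entropy estimate itself: because $G=e^{\lambda F_m/2}$ is nonlinear, $\Delta_iG$ involves $G$ evaluated at a perturbed edge configuration and must be dominated by $G$ at the original configuration up to a factor of $e^{C|\lambda||t_e-t_e'|}$. For $\lambda\geq 0$ such a domination loses a factor of $e^{\lambda t_e}$ along the geodesic and closes only under the exponential moment~\eqref{eq: exponential_moments}; for $\lambda\leq 0$ the opposite asymmetry allows a more delicate truncation which goes through under only $2+\log$ moments. This asymmetry is precisely why the two halves of the theorem carry different moment assumptions, and why the constraint $|\lambda|<1/(2\sqrt K)$ is imposed---it keeps the constants in the domination $\lambda$--independent, making the iteration in Lemma~\ref{lem: iteration} applicable.
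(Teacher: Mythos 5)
Your overall architecture (Falik--Samorodnitsky, Bernoulli encoding and log-Sobolev for the entropy sum, linearization plus lattice animals for the decoupling, and the $\lambda>0$/$\lambda\leq 0$ split by moment assumption) matches the paper. However, there is a genuine gap in how you bound the influence term $\sum_i(\mathbb{E}|V_i|)^2$.

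You claim $\mathbb{P}(V_i\neq 0)\leq C\|x\|_1^{-d\zeta}$ and then apply Cauchy--Schwarz to get $\sum_i(\mathbb{E}|V_i|)^2\leq (\max_i\mathbb{P}(V_i\neq 0))\Var G$. The algebraic step is valid, but the premise is false: $V_i=\mathbb{E}[G\mid\mathcal{F}_i]-\mathbb{E}[G\mid\mathcal{F}_{i-1}]$ is a difference of conditional expectations, which average over \emph{all} configurations of the later-indexed variables. Even when the edge carrying bit $i$ is far from any geodesic in the realized configuration, the two conditional expectations generically differ, so $V_i\neq 0$ with probability one (or at least with probability not tending to zero). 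The "small influence" that the BKS randomization gives you is not that $V_i$ has small support, but that resampling a single edge rarely changes the passage time. The correct route, used in the paper (Proposition~\ref{prop: V_k_bound}), is Jensen: $\mathbb{E}|V_k|\leq\mathbb{E}|G-G^{(k)}|$ where $G^{(k)}$ resamples $t_{e_k}$, then control $(G-G^{(k)})_+$ via Lemma~\ref{lem: DHS_lem} and $\mathbf{1}_{\{e_k\in Geo(z,z+x)\}}$, and apply Cauchy--Schwarz twice to obtain
\[
\sum_k(\mathbb{E}|V_k|)^2\leq C\lambda^2\,\mathbb{E}\bigl((1+e^{\lambda t_e})t_e\bigr)^2\,\|x\|_1^{\frac{2+\zeta(1-d)}{2}}\,\mathbb{E}e^{2\lambda F_m}.
\]
Because this bound does not contain $\Var G$ as a factor, the logarithm in Falik--Samorodnitsky is not "unconditionally at least $c\log\|x\|_1$": the paper needs the dichotomy in \eqref{eq: almost_done}--\eqref{eq: end_case_1}, splitting on whether $\Var e^{\lambda F_m}$ is already small or else the log factor is large. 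Your argument, as written, skips this case analysis because it relies on the false support bound.

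A secondary gap: when you invoke the lattice-animal step to "decouple the inner sum from the $e^{\lambda F_m}$ factor," the decoupling via the variational formula for entropy (Proposition~\ref{prop: variation}) reintroduces a term $\operatorname{Ent}(e^{\lambda F_m})$ on the right-hand side, which cannot simply be absorbed into $\|x\|_1\,\mathbb{E}e^{\lambda F_m}$ without a separate argument. The paper closes this loop with a symmetrized log-Sobolev/Herbst-type estimate (Theorems~\ref{thm: BLM_exponential} and \ref{thm: BLM_two} in the appendix), giving $\operatorname{Ent}(e^{\lambda F_m})\leq C\lambda^2\|x\|_1\,\mathbb{E}e^{\lambda F_m}$ for $\lambda$ in the stated ranges. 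Your proposal should make this step explicit, since it is the place where the moment assumptions \eqref{eq: exponential_moments} and \eqref{eq: two_moments} are used a second time, beyond the entropy-sum bound itself.
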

The proof of this bound will be broken into several sections below.

\subsection{Theorem~\ref{thm: F_m_concentration} implies Theorem~\ref{thm: main_thm}}\label{sec: reduction}

Assume first that we have the concentration bound
\begin{equation}\label{eqn: Fmconc}
\mathbb{P}\left(|F_m-\mathbb{E}F_m| \ge \frac{\|x\|^{1/2}_1}{(\log \|x\|_1)^{1/2}}\lambda\right) \le b e^{-c\lambda},~ \lambda \geq 0
\end{equation}
for some $b,c>0$ and that \eqref{eq: exponential_moments} holds. We will derive from \eqref{eqn: Fmconc} the corresponding estimate for the passage time $T=T(0,x)$:
\begin{equation}\label{eq: T_concentration}
\mathbb{P}\left(|T(0,x)-\mathbb{E}T(0,x)| \ge \frac{\|x\|_1^{1/2}}{(\log \|x\|_1)^{1/2}}\lambda\right) \le b'e^{-c'\lambda},~ \lambda \geq 0\ .
\end{equation}
Write
\[
T(0,x)-\mathbb{E}T(0,x) = F_m-\mathbb{E}T + T(0,x)-F_m\ ,
\]
and note that $\mathbb{E}F_m = \mathbb{E}T$.
If both events $\{|F_m - \mathbb{E}F_m| < \lambda/2\}$ and $\{|T(0,x)-F_m| <\lambda/2\}$ occur, then the triangle inequality implies that we have the bound
\[
|T(0,x)-\mathbb{E}T(0,x)|< \lambda\ .
\]
This results in the estimate
\begin{equation}\label{eqn: triangleineq}
\mathbb{P}(|T(0,x)-\mathbb{E}T(0,x)| \ge \lambda)\le \mathbb{P}(|F_m-\mathbb{E}F_m| \ge \lambda/2)+\mathbb{P}(|T(0,x)-F_m|\ge \lambda/2)\ .
\end{equation}
By subadditivity, we can write
\begin{align*}
|T(0,x) - F_m| = \left| T(0,x)-\frac{1}{\sharp B_m}\sum_{z\in B_m} T(z,z+x)\right| &\le \frac{1}{\sharp B_m}\sum_{z\in B_m}|T(0,x)-T(z,z+x)| \\
&\le \frac{1}{\sharp B_m}\sum_{z\in B_m}(T(0,z)+T(x,x+z))\ .
\end{align*}
Repeating the argument for \eqref{eq: deterministic_bound} (bounding $T(0,z)$ by the passage time of a deterministic path), we have for $\alpha \geq 0$ and each $z\in B_m$
\[ 
\mathbb{E}e^{\alpha T(0,z)} \le \left(\mathbb{E}e^{\alpha t_e}\right)^{\|x\|_1^\zeta} = \mathbf{C}(\alpha)^{\|x\|_1^\zeta}\ .
\]
Here $\alpha$ is from \eqref{eq: exponential_moments}. We now obtain a bound for the second term on the right in \eqref{eqn: triangleineq}. Let $M>0$. First, by a union bound,
\[\mathbb{P}\left(\frac{1}{\sharp B_m}\sum_{z\in B_m}(T(0,z)+T(x,x+z)) \ge 2M\right) \le 2\mathbb{P}\left( \max_{z\in B_m} T(0,z) \ge M\right).\]
The last quantity is bounded by
\begin{align*}
2(\sharp B_m) \cdot\max_{z\in B_m} \mathbb{P}(T(0,z)\ge M) &\le 2(\sharp B_m) \cdot e^{-\alpha M}\mathbf{C}(\alpha)^{\|x\|_1^\zeta}\\
&\le2\|x\|_1^{d\zeta}e^{-\alpha M}\mathbf{C}(\alpha)^{\|x\|_1^\zeta}\ .
\end{align*}
Choosing $2M= \lambda \|x\|^{1/2}_1/(\log\|x\|_1)^{1/2}$ and adjusting constants, we find the bound
\[
\mathbb{P}\left( |T(0,x)-F_m|\ge \lambda \|x\|^{1/2}_1/(\log\|x\|_1)^{1/2} \right) \leq b'e^{-c' \lambda}\ .
\]
Combined with \eqref{eqn: Fmconc} in \eqref{eqn: triangleineq}, this shows \eqref{eq: T_concentration}.

We now move to proving that under assumption \eqref{eq: two_moments}, if we prove Theorem~\ref{thm: F_m_concentration}, then there exist $b'',c''>0$ such that
\[
\mathbb{P}\left( T(0,x) - \mathbb{E}T(0,x) < -\frac{\|x\|_1^{1/2}}{(\log \|x\|_1)^{1/2}}\lambda\right) \leq b''e^{-c'' \lambda},~ \lambda \geq 0\ .
\]
Defining $S = \sum_{e \in B_m} t_e$, where the sum is over all edges with both endpoints in $B_m$, then
\[
\mathbb{P}(S \leq 2\mathbb{E}S) \geq 1/2\ .
\]
By the Harris-FKG inequality \cite[Theorem~2.15]{BLM}, if we put $c_x = \frac{\|x\|_1^{1/2}}{(\log \|x\|_1)^{1/2}}$ and $S' = \sum_{e \in x+B_m} t_e$, then
\[
\mathbb{P}(T(0,x) - \mathbb{E}T(0,x) \leq -\lambda c_x,~S \leq 2\mathbb{E}S,~S' \leq 2\mathbb{E}S') \geq (1/4) \mathbb{P}(T(0,x) - \mathbb{E}T(0,x) \leq -\lambda c_x )\ .
\]
This means that
\[
\mathbb{P}(T(0,x) - \mathbb{E}T(0,x) \leq -\lambda c_x) \leq 4 \mathbb{P}(T(0,x) - \mathbb{E}T(0,x) \leq -\lambda c_x, ~S \leq 2\mathbb{E}S, ~S' \leq 2\mathbb{E}S')\ .
\]
However the event on the right implies that for any $z \in B_m$, $T(z,z+x) \leq T(0,x)+ 4\mathbb{E}S$. Therefore
\[
\mathbb{P}(T(0,x) - \mathbb{E}T(0,x) \leq -\lambda c_x) \leq 4 \mathbb{P}(F_m - \mathbb{E}F_m \leq -\lambda c_x + 4\mathbb{E}S)\ .
\]
Now we can bound $4\mathbb{E}S$ by $\newconstant\label{newie} \|x\|_1^{d\zeta}$, so
\[
\mathbb{P}(T(0,x) - \mathbb{E}T(0,x) \leq -\lambda c_x) \leq 4 \mathbb{P}(F_m - \mathbb{E}F_m \leq -\lambda c_x + \oldconstant{newie}\|x\|_1^{d\zeta})
\]
and this is bounded by
\[
4\mathbb{P}\left( F_m-\mathbb{E}F_m \leq -\left(\lambda - \oldconstant{newie}\frac{\|x\|_1^{d\zeta}}{c_x} \right)c_x \right) \leq 4c_1 \exp\left( - c_2 \left( \lambda - C_7 \frac{\|x\|_1^{d\zeta}}{c_x}\right) \right)\ ,
\]
as long as $\lambda \geq C_7 \frac{\|x\|_1^{d\zeta}}{c_x}$. 

To finish, we simply choose $\zeta = (4d)^{-1}$, so that $\|x\|_1^{d\zeta}/c_x \leq \newconstant\label{newie_2}$ for $\|x\|_1 > 1$ and some $\oldconstant{newie_2}>0$. This  implies
\[
\mathbb{P}(T(0,x) - \mathbb{E}T(0,x) \leq -\lambda c_x) \leq 4 c_1 \exp\left( -c_2 (\lambda - \oldconstant{newie_2}) \right) \text{ for } \lambda \geq 0, ~\|x\|_1>1\ ,
\]
giving the bound $\newconstant e^{-\newconstant \lambda}$.

\subsection{Falik-Samorodnitsky and entropy}

Enumerate the edges of $\mathcal{E}^d$ as $e_1, e_2, \ldots$ and write $e^{\lambda F_m}$ as a sum of a martingale difference sequence:
\[
G-\mathbb{E}G = \sum_{k=1}^\infty V_k\ ,
\]
where
\begin{equation} \label{eqn: Vkdef}
V_k = \mathbb{E}[G \mid \mathcal{F}_k] - \mathbb{E}[G \mid \mathcal{F}_{k-1}]
\end{equation}
and $G$ was defined in \eqref{eq: G_def}. We have written $\mathcal{F}_k$ for the sigma-algebra $\sigma(t_{e_1}, \ldots, t_{e_k})$, with $\mathcal{F}_0$ trivial. In particular if $F\in L^1(\Omega, \mathbb{P})$,
\begin{equation}
\mathbb{E}[F\mid \mathcal{F}_k] = \int F\big((t_e)\big)\, \prod_{i\ge k+1}\mu(\mathrm{d}t_{e_i}).
\end{equation}

To prove concentration for $F_m$, we bound the variance of $G$; the lower bound comes from the proof of \cite[Theorem~2.2]{FS}.
\begin{lma}[Falik-Samorodnitsky]\label{lem: lower_bound}

If $\mathbb{E}G^2<\infty$,
\begin{equation}\label{eq: BR_lower_bound}
\sum_{k=1}^\infty Ent(V_k^2) \geq \Var G ~ \log \left[ \frac{\Var G}{\sum_{k=1}^\infty (\mathbb{E} |V_k|)^2}\right] \ .
\end{equation}
\end{lma}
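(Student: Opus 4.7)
The plan is to reduce the claim to three standard ingredients: orthogonality of martingale differences, a single-coordinate lower bound on $Ent(V_k^2)$ in terms of $\mathbb{E}|V_k|$, and the log-sum inequality. The hypothesis $\mathbb{E}G^2 < \infty$ enters only to guarantee that $V_k \in L^2$ and that the series $G-\mathbb{E}G = \sum_k V_k$ converges in $L^2(\mathbb{P})$, so that Pythagoras applies.

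First, since $(V_k)$ is an $(\mathcal{F}_k)$-martingale difference sequence and $G-\mathbb{E}G = \sum_k V_k$ in $L^2$, orthogonality yields
\[
\Var G = \sum_{k=1}^\infty \mathbb{E}V_k^2.
\]
Second, I would prove the per-coordinate bound
\[
Ent(V_k^2) \geq \mathbb{E}V_k^2 \, \log \frac{\mathbb{E}V_k^2}{(\mathbb{E}|V_k|)^2}. \qquad (\ast)
\]
Expanding $Ent(V_k^2) = \mathbb{E}[V_k^2 \log V_k^2] - \mathbb{E}V_k^2 \log \mathbb{E}V_k^2$ and using $\log V_k^2 = 2\log|V_k|$, the estimate $(\ast)$ is equivalent, after dividing by $2$, to $\mathbb{E}[V_k^2 \log |V_k|] \geq \mathbb{E}V_k^2 \log(\mathbb{E}V_k^2 / \mathbb{E}|V_k|)$. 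Setting $W = |V_k|$ and $\mu = \mathbb{E}W$ (if $\mu = 0$ then $V_k \equiv 0$ and both sides of $(\ast)$ vanish), I tilt to the probability measure $d\tilde{\mathbb{P}} = (W/\mu)\,d\mathbb{P}$. Under $\tilde{\mathbb{P}}$ one has $\mathbb{E}_{\tilde{\mathbb{P}}}[W] = \mathbb{E}V_k^2/\mu$ and $\mathbb{E}_{\tilde{\mathbb{P}}}[W\log W] = \mathbb{E}[V_k^2 \log|V_k|]/\mu$, so the required inequality becomes $\mathbb{E}_{\tilde{\mathbb{P}}}[W\log W] \geq \mathbb{E}_{\tilde{\mathbb{P}}}[W]\log\mathbb{E}_{\tilde{\mathbb{P}}}[W]$, which is just Jensen's inequality for the convex function $x\mapsto x\log x$ (equivalently, $Ent_{\tilde{\mathbb{P}}}(W) \geq 0$).

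Third, with $a_k = \mathbb{E}V_k^2$ and $b_k = (\mathbb{E}|V_k|)^2$, I apply the log-sum inequality $\sum_k a_k \log(a_k/b_k) \geq \bigl(\sum_k a_k\bigr) \log\bigl(\sum_k a_k/\sum_k b_k\bigr)$. Cauchy--Schwarz gives $a_k \geq b_k$, so each individual log-ratio in $(\ast)$ is nonnegative and no sign difficulties arise when summing. Combining with the orthogonality identity,
\[
\sum_{k=1}^\infty Ent(V_k^2) \geq \sum_{k=1}^\infty \mathbb{E}V_k^2 \log\frac{\mathbb{E}V_k^2}{(\mathbb{E}|V_k|)^2} \geq \Var G \cdot \log \frac{\Var G}{\sum_{k=1}^\infty (\mathbb{E}|V_k|)^2},
\]
which is the asserted inequality.

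There is no substantive obstacle here; the derivation is bookkeeping once $(\ast)$ is identified. The one genuinely clever point is locating the correct change of measure for $(\ast)$: the tilt by $W/\mu$ is the natural one that turns a quadratic moment bound into classical Jensen for $x\log x$. Degenerate cases are handled by the convention $0 \log(0/0) = 0$: if $V_k \equiv 0$ for some $k$ that term is simply dropped, and if $\sum_k (\mathbb{E}|V_k|)^2 = 0$ then $\Var G = 0$ and both sides of the stated inequality are $0$.
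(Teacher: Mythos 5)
Your proof is correct, and it is a clean reproduction of the Falik--Samorodnitsky argument that the paper merely cites (the paper itself gives no proof, referring the reader to the proof of Theorem~2.2 in \cite{FS}). The three ingredients you use --- orthogonality of the martingale differences via $\mathbb{E}G^2<\infty$, the per-coordinate bound $Ent(V_k^2)\geq\mathbb{E}V_k^2\log\bigl(\mathbb{E}V_k^2/(\mathbb{E}|V_k|)^2\bigr)$ obtained by tilting by $|V_k|/\mathbb{E}|V_k|$ and applying Jensen for $x\mapsto x\log x$, and the log-sum inequality --- are exactly the ingredients in the cited source, so there is nothing to flag beyond confirming correctness.
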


In the above lemma, we have used $Ent$ to refer to entropy:
\begin{df}
If $X$ is a non-negative random variable with $\mathbb{E}X<\infty$ then the entropy of $X$ is defined by:
\[
Ent~X = \mathbb{E}X\log X - \mathbb{E}X \log \mathbb{E}X\ .
\]
\end{df}

We will need some basic results on entropy. This material is taken from \cite[Section~2]{DHS}, though it appears in various places, including \cite{BLM}. By Jensen's inequality, $Ent~X \geq 0$. There is a variational characterization of entropy \cite[Section~5.2]{ledoux} that we will use.
\begin{prop}\label{prop: variation}
We have the formula
\[
Ent~X = \sup\{\mathbb{E}XY : \mathbb{E}e^Y \leq 1\}\ .
\]
\end{prop}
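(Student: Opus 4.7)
The plan is to prove the two inequalities $\text{Ent}~X \le \sup\{\mathbb{E}XY : \mathbb{E}e^Y \le 1\}$ and $\text{Ent}~X \ge \sup\{\mathbb{E}XY : \mathbb{E}e^Y \le 1\}$ separately, the first by exhibiting a near-optimal $Y$, the second via Young's inequality. Throughout I may assume $0 < \mathbb{E}X < \infty$ and $\text{Ent}~X < \infty$; the degenerate cases ($X \equiv 0$ a.s., or $\mathbb{E}X = \infty$) can be handled by a direct check or an approximation.

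For the lower bound, I would take $Y = \log(X/\mathbb{E}X)$ on $\{X>0\}$ (and define it appropriately, say $-\infty$, on $\{X=0\}$, interpreted via truncation $Y_n = \log\max(X,1/n) - \log \mathbb{E}X$ if needed). Then a direct computation gives $\mathbb{E}e^Y = \mathbb{E}(X/\mathbb{E}X) = 1$ and
\[
\mathbb{E}XY = \mathbb{E}X\log X - \mathbb{E}X\log\mathbb{E}X = \text{Ent}~X,
\]
so the supremum is at least $\text{Ent}~X$. If $\mathbb{P}(X=0)>0$ one gets the bound by applying the same choice with $\max(X,1/n)$ and passing to the limit via dominated/monotone convergence.

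For the upper bound, the key input is the elementary Young-type inequality
\[
ab \le a\log a - a + e^b, \qquad a \ge 0,\ b \in \mathbb{R},
\]
which follows by minimizing the right-hand side minus the left-hand side in $a$ (the minimum is $0$, attained at $a = e^b$). Applying this pointwise with $a = X/\mathbb{E}X$ and $b = Y$, multiplying through by $\mathbb{E}X$, and taking expectations gives
\[
\mathbb{E}XY \;\le\; \text{Ent}~X \;-\; \mathbb{E}X \;+\; (\mathbb{E}X)(\mathbb{E}e^Y).
\]
Whenever $\mathbb{E}e^Y \le 1$, the last two terms combine to $\le 0$, yielding $\mathbb{E}XY \le \text{Ent}~X$. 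Taking the sup over admissible $Y$ finishes the proof.

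The only real obstacle is a bookkeeping issue: justifying the optimal choice $Y = \log(X/\mathbb{E}X)$ when $X$ vanishes with positive probability, where one must check that the truncation $Y_n$ satisfies $\mathbb{E}e^{Y_n} \le 1$ and that $\mathbb{E}XY_n \to \text{Ent}~X$. Both follow from dominated convergence using $X\log_+(X/\mathbb{E}X) \in L^1$ (which is implied by $\text{Ent}~X < \infty$) and the fact that $X \cdot \mathbf{1}_{\{X=0\}} \log(1/n) = 0$. No other analytic difficulty arises.
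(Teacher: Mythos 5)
The paper itself does not prove Proposition~\ref{prop: variation}; it cites Ledoux (\cite[Section~5.2]{ledoux}) for the variational characterization, so there is no proof in the paper to compare against. Your argument is the standard Donsker--Varadhan duality proof: Young's inequality for the upper bound, a near-optimal dual element for the lower bound, and the upper bound is completely correct as written. The pointwise inequality $ab \le a\log a - a + e^b$ with $a = X/\mathbb{E}X$, $b = Y$, integrated and combined with $\mathbb{E}e^Y \le 1$, gives $\mathbb{E}XY \le Ent~X$.

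There is, however, a concrete bug in the explicit truncation you propose for the lower bound when $\mathbb{P}(X=0)>0$. With $Y_n = \log\max(X,1/n) - \log\mathbb{E}X$ one gets
\[
\mathbb{E}e^{Y_n} = \frac{\mathbb{E}\max(X,1/n)}{\mathbb{E}X} \ge 1\,,
\]
with strict inequality whenever $\mathbb{P}(X<1/n)>0$, so this $Y_n$ violates the constraint and is not admissible. The fix is to normalize by the truncated mean: take $Y_n = \log\max(X,1/n) - \log\mathbb{E}\max(X,1/n)$, so that $\mathbb{E}e^{Y_n}=1$ exactly. Then $\mathbb{E}XY_n = \mathbb{E}\bigl[X\log\max(X,1/n)\bigr] - \mathbb{E}X \log\mathbb{E}\max(X,1/n)$. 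The second term tends to $\mathbb{E}X\log\mathbb{E}X$ by dominated convergence. For the first, $X\log\max(X,1/n)=0$ on $\{X=0\}$ and $=X\log X$ on $\{X\ge 1/n\}$; its negative part is uniformly bounded by $e^{-1}$ (for $0\le t<1/n$ one has $|t\log(1/n)|\le (\log n)/n \le e^{-1}$, and $|t\log t|\le e^{-1}$ on $[0,1]$), while its positive part is $X\log_+ X$ independently of $n$, so $\mathbb{E}[X\log\max(X,1/n)]\to\mathbb{E}X\log X$. Hence $\mathbb{E}XY_n\to Ent~X$ and the lower bound follows. With this one-line correction your proof is complete.
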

The second fact we need is a tensorization for entropy. For an edge $e$, write $Ent_e X$ for the entropy of $X$ considered only as a function of $t_e$ (with all other weights fixed). The version below is \cite[Theorem~2.3]{DHS}.
\begin{prop}\label{prop: tensor}
If $X\in L^2$ is a non-negative function of $(t_e)$ then
\[
Ent~X \leq \sum_e \mathbb{E} Ent_e~X\ .
\]
\end{prop}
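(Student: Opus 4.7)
My plan is to induct on the number of edge-variables that $X$ depends on, after reducing from the infinite index set $\mathcal{E}^d$ to the finite case. Since every term on the right is non-negative, it suffices to establish the inequality for $X$ measurable with respect to $\sigma(t_{e_1},\ldots,t_{e_n})$ for some $n$, and then pass to general $X \in L^2$ by approximating with conditional expectations $X_n = \mathbb{E}[X \mid \sigma(t_{e_1},\ldots,t_{e_n})]$. Along this martingale, $Ent\,X_n$ increases to $Ent\,X$ (a consequence of conditional Jensen applied to $\phi(x) = x\log x$ and the fact that $\mathbb{E}X_n = \mathbb{E}X$ is constant), while each $\mathbb{E}\,Ent_e\,X_n$ is dominated by $\mathbb{E}\,Ent_e\,X$ by the same convexity.

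For finite-dimensional $X$, the induction is built on the exact two-variable chain rule
\[
Ent\,X \;=\; \mathbb{E}\bigl[\,Ent_{e_1}X\,\bigr] \;+\; Ent\bigl(\mathbb{E}_{e_1}X\bigr),
\]
where $\mathbb{E}_{e_1}X$ denotes the partial expectation over $t_{e_1}$ alone. The identity is a direct computation from the definition using Fubini and the product structure of $\mathbb{P}$: one adds and subtracts $\mathbb{E}\bigl[(\mathbb{E}_{e_1}X)\log(\mathbb{E}_{e_1}X)\bigr]$ inside $\mathbb{E}[X\log X] - \mathbb{E}X \log\mathbb{E}X$ and regroups. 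Applying the inductive hypothesis to $\mathbb{E}_{e_1}X$, which is a function of only $n-1$ variables, gives
\[
Ent\bigl(\mathbb{E}_{e_1}X\bigr) \;\leq\; \sum_{i \geq 2}\mathbb{E}\bigl[\,Ent_{e_i}(\mathbb{E}_{e_1}X)\,\bigr],
\]
so the inductive step reduces to showing $Ent_{e_i}(\mathbb{E}_{e_1}X) \leq \mathbb{E}_{e_1}\bigl[\,Ent_{e_i}X\,\bigr]$ for each $i \geq 2$.

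This last inequality is Jensen applied to the functional $f \mapsto Ent_{e_i}(f)$, with the convex combination being the averaging over $t_{e_1}$. The main substantive point, and the one I expect to be the principal obstacle, is the convexity of $Ent_{e_i}$: since the definition presents $Ent$ as a difference of two convex expressions in $f$, convexity is not visible from the definition itself. It is, however, immediate from Proposition~\ref{prop: variation}, which writes $Ent_{e_i}(f) = \sup\{\mathbb{E}_{e_i}[fY] : \mathbb{E}_{e_i}e^Y \leq 1\}$ as a supremum of linear functionals of $f$. Once convexity is in hand, the chain rule, the inductive hypothesis, and the Jensen estimate combine directly to close the induction, and the limit argument of the first paragraph delivers the full result.
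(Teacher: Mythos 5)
Your proof is correct. The paper itself does not supply a proof of this proposition --- it is quoted from \cite[Theorem~2.3]{DHS} --- but the argument you give is the standard tensorization proof (chain rule for entropy, then convexity of $\mathrm{Ent}_{e_i}$ read off from the variational formula of Proposition~\ref{prop: variation}, followed by a martingale approximation to pass from finitely many coordinates to the full product space), which is precisely what the cited reference uses. Two small points worth making explicit if you write this up: (i) the hypothesis $X\in L^2$ is what guarantees $\mathbb{E}[X\log X]<\infty$ (via $x\log x\le x^2$ for $x\ge 1$ and $x\log x\ge -1/e$) and hence that $Ent\,X$, $Ent\,X_n$, and each $\mathbb{E}\,Ent_e X$ are finite, and it is also what makes the limit $\mathbb{E}[\phi(X_n)]\uparrow\mathbb{E}[\phi(X)]$ go through (upper bound by conditional Jensen, lower bound by Fatou applied to $\phi+1/e\ge 0$ after a.s.\ martingale convergence); (ii) the domination $\mathbb{E}\,Ent_e X_n\le\mathbb{E}\,Ent_e X$ is the same Jensen/variational step you use in the induction, now applied to the operator integrating out the coordinates beyond the $n$-th, and it also gives $Ent_e X_n=0$ for $e\notin\{e_1,\dots,e_n\}$, so the right-hand side genuinely increases with $n$.
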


\section{Bound on influences}\label{sec: influences}
To bound the sum $\sum_{k=1}^\infty \left( \mathbb{E}|V_k|\right)^2$ we start with a simple lemma from \cite[Lemma~5.2]{DHS}. For a given edge-weight configuration $(t_e)$ and edge $e$, let $(t_{e^c},r)$ denote the configuration with value $t_f$ if $f \neq e$ and $r$ otherwise. Let $T_z(t_{e^c},r)$ be the variable $T_z = T(z,z+x)$ in the configuration $(t_{e^c},r)$ and define $Geo(z,z+x)$ as the set of edges in the intersection of all geodesics from $z$ to $z+x$.
\begin{lma}\label{lem: DHS_lem}
For $e \in \mathcal{E}^d$, the random variable
\[
D_{z,e} := \sup \left[ \{r \geq 0 : e \text{ is in a geodesic from }z \text{ to } z+x \text{ in } (t_{e^c},r)\} \cup \{0\} \right]
\]
has the following properties almost surely.
\begin{enumerate}
\item $D_{z,e} < \infty$.
\item For $0 \leq s \leq t$,
\[
T_z(t_{e^c},t) - T_z(t_{e^c},s) = \min \{t-s, (D_{z,e}-s)_+\}\ .
\]
\item For $0 \leq s < D_{z,e},~e \in Geo(z,z+x)$ in $(t_{e^c},s)$.
\end{enumerate}
\end{lma}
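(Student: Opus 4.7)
My approach is to parametrize the passage time $T(r) := T_z(t_{e^c}, r)$ as a function of $r \geq 0$ with all other edge weights fixed, and to exploit the fact that it is an infimum of affine functions of $r$. Paths $\gamma$ from $z$ to $z+x$ split into those avoiding $e$ (whose cost is independent of $r$) and those using $e$ (whose cost is affine in $r$ with slope $1$). Setting
\[
A = \inf_{\gamma:\, z \to z+x,\, e \notin \gamma} T(\gamma), \qquad B_0 = \inf_{\gamma:\, z \to z+x,\, e \in \gamma} \sum_{f \in \gamma,\, f \neq e} t_f,
\]
I obtain the clean identity $T(r) = \min(A, B_0 + r)$. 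This is a concave, nondecreasing, $1$-Lipschitz piecewise-linear function with slopes in $\{0,1\}$, and essentially everything follows from this picture.

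For Part 1, I would argue that $A$ is a.s.\ finite (pick any deterministic lattice path from $z$ to $z+x$ avoiding $e$; its passage time is a finite sum of a.s.\ finite weights). For $r > A$, any path through $e$ has cost at least $r > A \geq T(r)$, so no geodesic can contain $e$. Hence $D_{z,e} \leq A < \infty$ almost surely.

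For Part 2, I would identify the breakpoint of $r \mapsto T(r)$. If $A \leq B_0$, then $T(r) \equiv A$ and no geodesic ever uses $e$, so $D_{z,e} = 0$ and both sides of the claimed identity equal $0$. If $A > B_0$, then $T(r) = B_0 + r$ for $r \leq A - B_0$ and $T(r) = A$ for $r \geq A - B_0$, so that $D_{z,e} = A - B_0$. Splitting $0 \le s \le t$ into the three sub-cases $t \leq D_{z,e}$, $s \leq D_{z,e} \leq t$, and $s \geq D_{z,e}$ gives $T(t) - T(s) = t - s$, $D_{z,e} - s$, and $0$ respectively, matching $\min\{t - s, (D_{z,e}-s)_+\}$ in each case.

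Part 3 is then immediate from the same picture: for $0 \leq s < D_{z,e}$, we are necessarily in the case $A > B_0$ with $B_0 + s < A$, so any path from $z$ to $z+x$ avoiding $e$ has cost at least $A > T(s)$ and therefore cannot be a geodesic. Hence every geodesic contains $e$, giving $e \in Geo(z, z+x)$. The only care point in the whole argument — and the closest thing to an obstacle — is reconciling the sup-over-$r$ definition of $D_{z,e}$ with the formula $D_{z,e} = (A - B_0)_+$ at the single boundary point $r = A - B_0$, where whether a geodesic through $e$ exists depends on whether the infimum defining $B_0$ is attained. This has no effect on the supremum, nor on the $r$-strict inequality needed in Parts 2 and 3.
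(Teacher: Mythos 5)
Your argument is correct. Note that the paper itself does not re-derive this lemma; it is simply quoted from \cite[Lemma~5.2]{DHS}. Your decomposition
\[
T_z(t_{e^c},r) = \min\bigl(A,\, B_0 + r\bigr),
\]
with $A$ the optimal cost over paths avoiding $e$ and $B_0$ the optimal $e$-free partial cost over (self-avoiding) paths through $e$, is exactly the right way to expose the piecewise-linear structure: the map is nondecreasing, concave, $1$-Lipschitz, and changes slope once at $r = (A-B_0)_+$, which is precisely $D_{z,e}$; all three assertions then read off immediately, and in particular $D_{z,e}$ is manifestly $t_e$-independent. One small point worth making explicit (it underlies the phrase ``almost surely'' in the lemma and is needed so that $Geo(z,z+x)$ is the intersection over a \emph{nonempty} set of geodesics): under \eqref{eq: percolation_assumption}, Kesten's bound \eqref{eq: kesten_length} implies that almost surely only finitely many self-avoiding paths have passage time below any fixed threshold, and this persists after modifying the single weight $t_e$ to any $r\geq 0$; hence $A$ and $B_0$ are attained and geodesics exist in $(t_{e^c},r)$ for every $r$ simultaneously. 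Your closing remark about the boundary point $r = A - B_0$ (and $r=0$ when $A \le B_0$) is well taken but, as you say, irrelevant to the supremum and to the strict inequalities in parts 2 and 3.
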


\noindent
It is important to note that $D_{z,e}$, and therefore $T_z(t_{e^c},t)-T_z(t_{e^c},s)$, does not depend on $t_e$.

We need one more lemma from \cite{DHS} bounding the length of geodesics. Let $\mathcal{G}$ be the set of all finite self-avoiding geodesics.
\begin{lma}\label{lem: geodesic_length}
Assuming $\mathbb{E}t_e^2<\infty$, there exists $\newconstant\label{C_5}$ such that for all finite $E \subset \mathcal{E}^d$,
\[
\mathbb{E} \max_{\gamma \in \mathcal{G}} \#(E \cap \gamma) \leq \oldconstant{C_5} \text{diam }E \ .
\]
\end{lma}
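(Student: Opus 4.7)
The plan is to reduce the maximum over all finite self-avoiding geodesics to geodesics whose endpoints both lie in the vertex set $V(E)$ of $E$, then bound the length of each such geodesic via Proposition~\ref{prop: kubota}, and finally control the resulting maximum.

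For the reduction, I would observe that if $\gamma\in\mathcal{G}$ has $\#(E\cap\gamma)\ge 1$, the subpath $\gamma'$ of $\gamma$ between its first and last vertex in $V(E)$ is itself a finite self-avoiding geodesic (since any subpath of a finite self-avoiding geodesic is a geodesic); its endpoints lie in $V(E)$, so are within $L^1$-distance $D:=\mathrm{diam}\,E$ of one another, and $\#(E\cap\gamma')=\#(E\cap\gamma)$. Writing $G(x,y)$ for the maximum length of a self-avoiding geodesic from $x$ to $y$, this gives
\[
\max_{\gamma\in\mathcal{G}}\#(E\cap\gamma)\;\le\;\max_{x,y\in V(E)}G(x,y).
\]
Next, applying the translation-invariant version of Proposition~\ref{prop: kubota} at each $x\in V(E)$,
\[
G(x,y)\;\le\;a^{-1}T(x,y)+Y_{x,y},\qquad\mathbb{P}(Y_{x,y}\ge n)\le e^{-\oldconstant{C_2}n},
\]
uniformly in $x,y$. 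Since $|V(E)|\le CD^d$, a union bound on the exponential tails of the $Y_{x,y}$'s over the at most $|V(E)|^2$ pairs gives $\mathbb{E}\max_{x,y\in V(E)}Y_{x,y}\le C(1+\log D)\le C'D$, which is absorbed into the target bound.

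The remaining task, and the hard part of the argument, is to show $\mathbb{E}\max_{x,y\in V(E)}T(x,y)\le CD$. Fixing any $x_0\in V(E)$ and using the triangle inequality for the pseudo-metric $T$ reduces this to bounding $\mathbb{E}\max_{z\in V(E)}T(x_0,z)$; each $T(x_0,z)$ is in turn dominated by the passage time along a deterministic lattice path $\pi(x_0,z)$ of length $\|x_0-z\|_1\le D$. The obstacle is that a crude Chebyshev union bound over $|V(E)|\le CD^d$ points, using $\mathbb{E}T(x_0,z)\le D\,\mathbb{E}t_e$ and $\Var T(x_0,z)\le D\,\Var t_e$ (both valid under $\mathbb{E}t_e^2<\infty$), loses a polynomial factor in $D$.

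To circumvent this, my plan is to dominate $\max_{z\in V(E)}T(x_0,z)$ by a greedy lattice-animal sum $\max_{A\ni x_0,\,|A|\le D}\sum_{e\in A}t_e$ and to bound the expectation of this animal-sum maximum by $CD$ using only $\mathbb{E}t_e^2<\infty$, via the lattice-animal arguments of Section~\ref{sec: animals} together with the observation encapsulated in Proposition~\ref{prop:expdist}. The delicacy here is that the standard greedy lattice-animal theorems require moments of order $d$ rather than $2$; one must exploit the fact that the deterministic paths $\pi(x_0,z)$ for distinct $z\in V(E)$ can be chosen to share large common pieces, so that the effective number of independent contributions grows like $D$ rather than $|V(E)|$. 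Combining this step with the $Y$-bound from the previous paragraph produces the desired inequality $\mathbb{E}\max_{\gamma\in\mathcal{G}}\#(E\cap\gamma)\le \oldconstant{C_5}D$.
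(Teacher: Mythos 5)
Your reduction is sound as far as it goes: passing from a geodesic $\gamma$ to the subgeodesic $\gamma'$ between its first and last vertex in $V(E)$ preserves $\#(E\cap\gamma)$, so indeed $\max_{\gamma\in\mathcal{G}}\#(E\cap\gamma)\le\max_{x,y\in V(E)}G(x,y)$; Proposition~\ref{prop: kubota} (extended by translation invariance) gives $G(x,y)\le a^{-1}T(x,y)+Y_{x,y}$; and a union bound over $O(D^{2d})$ exponential-tail variables disposes of $\mathbb{E}\max Y_{x,y}$. But this means all of the content of the lemma has been pushed into the single estimate $\mathbb{E}\max_{x,y\in V(E)}T(x,y)\le C\,\mathrm{diam}\,E$, and that estimate you do not prove --- you only state it is the ``hard part.'' (For reference, the paper itself gives no proof of the lemma either; it cites it directly from~\cite{DHS}.)

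The gap is a genuine one, not a matter of detail. The lattice-animal route for the weights $t_e$ themselves fails for precisely the reason you name: linear growth of greedy lattice animals requires roughly $d$ moments, strictly more than $\mathbb{E}t_e^2<\infty$ when $d\ge 3$. The proposed fix of ``sharing common pieces'' does not close it: even in the most shared construction (the coordinate-wise comb of deterministic paths through a fixed base point $x_0$) the last coordinate direction produces about $(2D)^{d-1}$ essentially independent sums of order $D$ i.i.d.\ copies of $t_e$, and the second-moment bound $\mathbb{E}\max_{i\le N}X_i\le \mathbb{E}X+\sqrt{N}\,\sigma$ --- which is sharp for heavy-tailed laws --- yields only $O(D^{(d+1)/2})$. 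Dyadic chaining along deterministic paths hits the same $D^{d/2}$ wall. Moreover, the appeal to Proposition~\ref{prop:expdist} and the material of Section~\ref{sec: animals} is misdirected: that machinery concerns the transformed weights $w_e=1-\log F(t_e)$, which have exponential tails \emph{by construction} regardless of the law of $t_e$, and it gives no control on lattice-animal or path maxima of $t_e$ itself, which is what your plan requires. Any correct proof of $\mathbb{E}\max_{x,y\in V(E)}T(x,y)\le C\,D$ under only a second moment must use more than a path-by-path union bound --- e.g.\ that $T(u,v)$ is a minimum over many disjoint short detours and so has far lighter tails than $t_{uv}$, or the percolation of the set $\{e:t_e\le M\}$ guaranteed by $\mu(\{0\})<p_c$ --- and none of this appears in your sketch.
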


With these two tools we can bound the influences in the denominator of the logarithm of \eqref{eq: BR_lower_bound}. The following proof is very similar to the one of Benaim-Rossignol \cite[Theorem~4.2]{BR}.

\begin{prop}\label{prop: V_k_bound}
Assuming $\mathbb{E}t_e^2 < \infty$, there exists $\newconstant\label{C_99}$ such that
\[
\sum_{k=1}^\infty (\mathbb{E} |V_k|)^2 \leq \oldconstant{C_99} \lambda^2 \mathbb{E} ((1+e^{\lambda t_e})t_e)^2  \|x\|_1^{\frac{2+\zeta(1-d)}{2}}  \mathbb{E} e^{2\lambda F_m} \text{ for all }~x \in \mathbb{Z}^d\ .
\]
This inequality holds for any $\lambda$ for which the left side is defined.
\end{prop}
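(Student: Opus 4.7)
The plan is to follow the resampling strategy of Bena\"im--Rossignol. Fix $k$, let $t'_{e_k}$ be an independent copy of $t_{e_k}$, and let $\tilde\omega$ denote the configuration obtained from $\omega$ by swapping $t_{e_k}$ for $t'_{e_k}$; write $\tilde G=G(\tilde\omega)$. Since $V_k = \mathbb{E}[G - \tilde G\mid\mathcal{F}_k]$, conditional Jensen gives $\mathbb{E}|V_k|\le\mathbb{E}|G-\tilde G|$. The mean value theorem applied to $s\mapsto e^{\lambda s}$ yields
\[
|G-\tilde G|\le|\lambda|(G+\tilde G)|F_m-\tilde F_m|,
\]
and Lemma~\ref{lem: DHS_lem}, applied term by term in the definition of $F_m$, gives
\[
|F_m-\tilde F_m|\le\frac{(t_{e_k}+t'_{e_k})\,\bar N_{e_k}}{\#B_m},\qquad \bar N_{e_k}:=\#\{z\in B_m : D_{z,e_k}>0\}.
\]
The key structural point is that $\bar N_{e_k}$ is a function of $\{t_f : f\neq e_k\}$ alone, hence independent of both $t_{e_k}$ and $t'_{e_k}$.

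I would then apply Cauchy--Schwarz to separate the two factors:
\[
(\mathbb{E}|V_k|)^2 \le \lambda^2\,\mathbb{E}(G+\tilde G)^2\cdot\mathbb{E}(F_m-\tilde F_m)^2.
\]
The first factor is at most $4\mathbb{E}G^2 = 4\mathbb{E}e^{2\lambda F_m}$ (expand the square, use Cauchy--Schwarz on $\mathbb{E}[G\tilde G]\le\mathbb{E}G^2$, and note that $\tilde G$ has the same law as $G$). For the second, the independence of $\bar N_{e_k}$ from $(t_{e_k}, t'_{e_k})$ yields
\[
\mathbb{E}(F_m-\tilde F_m)^2\le \frac{4\mathbb{E}t_e^2\,\mathbb{E}\bar N_{e_k}^2}{(\#B_m)^2}\le\frac{4\mathbb{E}((1+e^{\lambda t_e})t_e)^2\,\mathbb{E}\bar N_{e_k}^2}{(\#B_m)^2},
\]
the last step just using $(1+e^{\lambda t_e})\ge 1$. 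After summing over $k$, the proposition reduces to the purely geometric bound
\[
\sum_{e\in\mathcal{E}^d}\mathbb{E}\bar N_e^2 \le C\,(\#B_m)^2\,\|x\|_1^{(2+\zeta(1-d))/2}.
\]

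The hard part is this last inequality. Writing
\[
\sum_{e}\mathbb{E}\bar N_e^2 = \sum_{z,z'\in B_m}\mathbb{E}\#\{e : D_{z,e}>0\text{ and }D_{z',e}>0\},
\]
one must show that for typical $z,z'\in B_m$ the set of edges that can be forced into geodesics from both $z$ to $z+x$ and from $z'$ to $z'+x$ is substantially smaller than the full geodesic length $\sim\|x\|_1$, by a factor of order $m^{-(d-1)/2}$ that reflects a $(d-1)$-dimensional slice transverse to the direction of $x$. I would extract this improvement using Lemma~\ref{lem: geodesic_length} (which bounds, via its diameter, the number of geodesic edges inside any finite set) combined with translation invariance of the model, along the lines of \cite[Theorem~4.2]{BR}.
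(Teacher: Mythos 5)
Your argument correctly reduces the proposition to a geometric estimate, but that estimate is precisely the thing the paper's proof is engineered to avoid, and I don't think it can be extracted from Lemma~\ref{lem: geodesic_length}.

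After your Cauchy--Schwarz step you need
\[
\sum_{e}\mathbb{E}\bar N_e^2 = \sum_{z,z'\in B_m}\mathbb{E}\,\#\{e : D_{z,e}>0\text{ and }D_{z',e}>0\} \le C\,(\#B_m)^2\,\|x\|_1^{(2+\zeta(1-d))/2}.
\]
For a typical pair $z,z'\in B_m$ this is a bound of order $\|x\|_1\cdot m^{-(d-1)/2}$ on the expected \emph{overlap} between the sets of potential geodesic edges from $z$ and from $z'$; that is, you are asking that geodesics from two starting points at distance of order $m$ share a vanishing fraction of their edges. Lemma~\ref{lem: geodesic_length} bounds how many edges a single geodesic can spend inside a fixed finite set by the diameter of that set; it says nothing about the intersection of two geodesics emanating from different sources, and indeed for small $\zeta$ one expects nearby geodesics to coalesce and share almost all of their $\Theta(\|x\|_1)$ edges. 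Even replacing your bound $|F_m-\tilde F_m|\le (t_{e_k}+t'_{e_k})\bar N_{e_k}/\#B_m$ by the sharper convexity bound $(F_m-\tilde F_m)^2\le (t_{e_k}+t'_{e_k})^2\bar N_{e_k}/\#B_m$ only gives $\sum_k(\mathbb{E}|V_k|)^2\lesssim \lambda^2\,\mathbb{E}t_e^2\,\|x\|_1\,\mathbb{E}e^{2\lambda F_m}$, which still lacks the required power saving.

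The paper sidesteps this entirely by factoring
\[
\sum_{k}(\mathbb{E}|V_k|)^2 \le \Bigl(\sup_j\mathbb{E}|V_j|\Bigr)\sum_k\mathbb{E}|V_k|\ ,
\]
and then bounding the two factors by \emph{different} estimates: the supremum is controlled (via convexity of $x\mapsto (x_+)^2$, then translation invariance and Lemma~\ref{lem: geodesic_length}) by $|\lambda|\bigl(\mathbb{E}e^{2\lambda F_m}\bigr)^{1/2}\|x\|_1^{\zeta(1-d)/2}$, while the sum is controlled (expanding the square and using $\mathbb{E}\#Geo(z,z+x)^2\lesssim\|x\|_1^2$ from Corollary~\ref{cor: exponential_length_cor}) by $|\lambda|\bigl(\mathbb{E}e^{2\lambda F_m}\bigr)^{1/2}\|x\|_1$. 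Crucially, both bounds involve only single-geodesic quantities (a geodesic's overlap with a box, a geodesic's total length), never the overlap between geodesics started from two points. That factorization is the missing idea in your proposal. Also note a small technical point: when $\lambda>0$ the mean value theorem should produce $e^{\lambda F_m^{(k)}}$, and converting this into the common factor $e^{\lambda F_m}$ is where the extra $e^{\lambda t_e'}$ in $(1+e^{\lambda t_e})$ genuinely arises (via $F_m^{(k)}\le F_m+t_{e_k}'$); your version makes the factor $\mathbb{E}((1+e^{\lambda t_e})t_e)^2$ appear only through a cosmetic relaxation of $\mathbb{E}t_e^2$, which signals that the decoupling is not set up in the way the estimate requires.
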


\noindent
Under \eqref{eq: exponential_moments}, we require $\lambda \in [0,\alpha]$ for the left side to be defined. Under \eqref{eq: two_moments}, one can take $\lambda \leq 0$.
\begin{proof}
Let $F_m^{(k)}$ be the variable $F_m$ with the edge weight $t_{e_k}$ replaced by an independent copy $t_{e_k}'$. Then we can give the upper bound
\begin{align}
\mathbb{E}|V_k| \leq \mathbb{E}\left| e^{\lambda F_m}-e^{\lambda F_m^{(k)}}\right| &= 2 \mathbb{E} \left( e^{\lambda F_m^{(k)}}-e^{\lambda F_m} \right)_+ \label{eq: first_rep}\\
&= 2 \mathbb{E} \left( e^{\lambda F_m}-e^{\lambda F_m^{(k)}} \right)_+ \label{eq: second_rep}\ .
\end{align}
We will use \eqref{eq: first_rep} when $\lambda >0$ and \eqref{eq: second_rep} when $\lambda \leq 0$. With these restrictions, the integrands in both cases above are only nonzero when $F_m^{(k)} > F_m$. 
Apply the mean value theorem to get
\[
\mathbb{E}|V_k| \leq 
\begin{cases}
\lambda~\mathbb{E}(e^{\lambda F_m^{(k)}} (F_m^{(k)} - F_m))_+ & \text{ when } \lambda>0 \\
|\lambda|~ \mathbb{E}(e^{\lambda F_m} (F_m^{(k)} - F_m))_+ & \text{ when } \lambda \leq 0
\end{cases}\ .
\]
To combine these, when $\lambda >0$, we use $F_m^{(k)} \leq F_m + t_{e_k}'$ to find $e^{\lambda F_m^{(k)}} \leq e^{\lambda F_m} e^{\lambda t_{e_k}'}$, so we obtain for both cases
\[
\mathbb{E}|V_k| \leq |\lambda|~ \mathbb{E} \left[ e^{\lambda F_m} \left[ (1+e^{\lambda t_{e_k}'})(F_m^{(k)}-F_m)_+ \right] \right]\ .
\]

By Cauchy-Schwarz, we have the following two bounds:
\begin{align}
\sum_{k=1}^\infty  \mathbb{E}|V_k| &\leq \sqrt{\lambda^2 \mathbb{E}e^{2\lambda F_m} \mathbb{E} \left( \sum_{k=1}^\infty (1+e^{\lambda t_{e_k}'}) \left(F_m^{(k)} - F_m \right)_+\right)^2} \text{ and } \label{eq: sum}\\
\mathbb{E}|V_k| &\leq \sqrt{\lambda^2 \mathbb{E}e^{2\lambda F_m} \mathbb{E}(1+e^{\lambda t_{e_k}'})^2\left(F_m^{(k)} - F_m\right)_+^2} \label{eq: term}\ .
\end{align}

We will bound these terms using Lemma~\ref{lem: DHS_lem}. Write 
\[
\mathbb{E} (1+ e^{\lambda t_{e_k}'})^2 (F_m^{(k)}-F_m)_+^2 = \mathbb{E} (1+e^{\lambda t_{e_k}'})^2 \left( \frac{1}{\# B_m} \sum_{z \in B_m} (T_z^{(k)}-T_z) \right)_+^2\ .
\]
Here $T_z^{(k)}$ is the variable $T_z$ in the configuration in which the $k$-th edge-weight $t_{e_k}$ is replaced by the independent copy $t_{e_k}'$. By convexity of the function $x \mapsto (x_+)^2$, we obtain the bound
\[
\frac{1}{\# B_m} \sum_{z \in B_m} \mathbb{E} (1+e^{\lambda t_{e_k}'})^2 (T_z^{(k)} - T_z)_+^2\ .
\]
By Lemma~\ref{lem: DHS_lem}, $(T_z^{(k)} - T_z)_+ \leq \mathbf{1}_{\{t_{e_k} < D_{z,e_k}\}} \times (t_{e_k}'- t_{e_k})_+$, so
\begin{align*}
\mathbb{E} (1+e^{\lambda t_{e_k}'})^2(F_m^{(k)}-F_m)_+^2 &\leq \frac{1}{\# B_m} \sum_{z \in B_m} \mathbb{E}((1+e^{\lambda t_{e_k}'})t_{e_k}')^2 \mathbf{1}_{\{t_{e_k} < D_{z,e_k}\}} \\
&\leq \mathbb{E}((1+e^{\lambda t_e})t_e)^2 \frac{1}{\# B_m} \sum_{z \in B_m} \mathbb{P}(e_k \in Geo(z,z+x))\ .
\end{align*}

By translation invariance, the final probability equals $\mathbb{P}(e_k - z \in Geo(0,x))$:
\begin{align}
\mathbb{E} (1+e^{\lambda t_{e_k}'})^2 (F_m^{(k)}-F_m)_+^2 &\leq \frac{\mathbb{E}((1+e^{\lambda t_e})t_e)^2}{\#B_m} \mathbb{E} \#\{e_k - z \in Geo(0,x) : z \in B_m\} \nonumber \\
&\leq \newconstant \|x\|_1^{\zeta(1-d)} \mathbb{E}((1+e^{\lambda t_e})t_e)^2 \label{eq: ingredient_1}\ .
\end{align}
We have used the assumption $\mathbb{E}t_e^2<\infty$ and Lemma~\ref{lem: geodesic_length} to bound the expectation above. After incorporating the factor $\lambda^2 \mathbb{E}e^{2\lambda F_m}$, this is our bound for \eqref{eq: term}.

For \eqref{eq: sum}, write
\begin{equation}\label{eq: ingredient_2}
\mathbb{E}\left( \sum_{k=1}^\infty (1+e^{\lambda t_{e_k}'}) (F_m^{(k)} - F_m)_+ \right)^2 \leq \frac{1}{\# B_m} \sum_{z \in B_m} \mathbb{E} \left( \sum_{k=1}^\infty ((1+e^{\lambda t_{e_k}'}) t_{e_k}')\mathbf{1}_{\{t_{e_k} < D_{z,e_k}\}} \right)^2\ .
\end{equation}
The expectation equals
\begin{align*}
&\sum_{k=1}^\infty \sum_{j=1}^\infty \mathbb{E} (1+e^{\lambda t_{e_k}'})(1+e^{\lambda t_{e_j}'}) t_{e_k}'t_{e_j}' \mathbf{1}_{\{t_{e_k}<D_{z,e_k},t_{e_j}<D_{z,e_j}\}} \\
\leq~& \mathbb{E}\left((1+e^{\lambda t_e})t_e\right)^2 \sum_{k=1}^\infty \sum_{j=1}^\infty \mathbb{P}(t_{e_k}<D_{z,e_k},t_{e_j}<D_{z,e_j}) \leq \mathbb{E}\left((1+e^{\lambda t_e})t_e\right)^2 \mathbb{E} \#Geo(z,z+x)^2\ .
\end{align*}
By \eqref{eq: two_length_bound} and $\mathbb{E}t_e^2 < \infty$, the last expression is bounded by $\newconstant \|x\|_1^2 \mathbb{E}((1+e^{\lambda t_e})t_e)^2$.

We can now finish the proof with this bound and \eqref{eq: ingredient_1}:
\[
\sum_{k=1}^\infty (\mathbb{E}|V_k|)^2 \leq \sup_j \mathbb{E}|V_j| \sum_{k=1}^\infty \mathbb{E}|V_k| \leq  \newconstant \lambda^2 \|x\|_1^{\frac{2+\zeta(1-d)}{2}}\mathbb{E}((1+e^{\lambda t_e})t_e)^2 \mathbb{E} e^{2 \lambda F_m}\ .
\]
\end{proof}

\section{Entropy bound}
The purpose of the present section is to give an intermediate upper bound for the sum of entropy terms in the left side of \eqref{eq: BR_lower_bound}. Namely we will prove the following inequality, recalling that $F$ is the distribution function of $t_e$.
\begin{thm}\label{thm: entropy_bound}
For some $\newconstant\label{D_3}>0$ independent of $\lambda$,
\[
\sum_{k=1}^\infty Ent(V_k^2) \leq \lambda^2 \frac{\oldconstant{D_3}C_\lambda}{\# B_m} \sum_{z \in B_m} \mathbb{E}\left[e^{2\lambda F_m} \sum_{e \in Geo(z,z+x)} (1-\log F(t_e)) \right] \text{ for all } x \in \mathbb{Z}^d\ .
\]
The constant $C_\lambda$ is determined as follows:
\begin{enumerate}
\item Assuming \eqref{eq: exponential_moments} and $\lambda \in [0,\alpha/2)$, $C_\lambda = Ent_e(t_e e^{\lambda t_e})^2 + \mathbb{E}[t_e e^{\lambda t_e}]^2$.
\item Assuming \eqref{eq: two_moments} and $\lambda \leq 0$, $C_\lambda = (\mathbb{E}e^{2\lambda t_e})^{-1}$.
\end{enumerate}
\end{thm}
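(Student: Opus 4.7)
The plan is to reduce $\sum_k Ent(V_k^2)$ to a sum of per-edge entropies of $G = e^{\lambda F_m}$, and then bound each of these via the log-Sobolev inequality for Bernoulli variables applied to a binary-expansion representation of the edge weights. First, since each $V_k = \mathbb{E}[G\mid\mathcal{F}_k]-\mathbb{E}[G\mid\mathcal{F}_{k-1}]$ depends only on $t_{e_1},\ldots,t_{e_k}$, applying Proposition~\ref{prop: tensor} edge by edge and rearranging the resulting double sum (using the martingale-difference identity employed in \cite{BR}, which absorbs $\sum_k \mathbb{E}[Ent_{e_j}(V_k^2)]$ into $\mathbb{E}[Ent_{e_j}(G^2)]$) reduces the problem to bounding $\sum_e \mathbb{E}[Ent_e(G^2)]$.

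For each edge $e$, I would use the Bernoulli representation alluded to in Section~\ref{sec: benc}: write $t_e = F^{-1}(U_e)$ with $U_e$ uniform on $[0,1]$, and expand $U_e = \sum_n \epsilon_n^{(e)} 2^{-n}$ with iid Bernoulli$(1/2)$ bits. Tensorizing again, now over the bits, gives $Ent_e(G^2)\leq\sum_n\mathbb{E}[Ent_{\epsilon_n^{(e)}}(G^2)]$, and the Bernoulli log-Sobolev inequality bounds each summand by a discrete squared derivative obtained by flipping the $n$-th bit. Flipping that bit changes $U_e$ by at most $2^{-n}$, hence changes $t_e$ by an amount controlled by the local variation of $F^{-1}$. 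By Lemma~\ref{lem: DHS_lem}, such a change produces a nontrivial change in $T_z$ only when $e\in Geo(z,z+x)$ in the relevant configuration, and its magnitude is bounded by the bit-induced increment in $t_e$.

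Writing $G = e^{\lambda F_m}$ and applying the chain rule to the discrete derivative will produce an overall $\lambda^2 G^2$ factor together with a squared $t_e$-increment. After summing the bit-level contributions, the resulting series of squared increments collapses, via an integration-by-parts / change-of-variable argument in the CDF $F$, to a weight proportional to $1-\log F(t_e)$; this is the general-distribution analog of the log-Sobolev weight appearing in \cite{BR} under the nearly Gamma assumption. Averaging over $z\in B_m$ using convexity then yields the stated bound with the sum over $e\in Geo(z,z+x)$.

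The main obstacle will be the treatment of the exponential factor $e^{2\lambda F_m}$, which defeats a naive Lipschitz bound and forces us to decouple $G^2$ from the $t_e$-dependent piece by means of the variational formula (Proposition~\ref{prop: variation}). For $\lambda\geq 0$ under \eqref{eq: exponential_moments}, the Bernoulli discrete derivative of $G$ pulls out a factor involving $t_e e^{\lambda t_e}$, whose second moment and entropy combine to give $C_\lambda = Ent_e(t_e e^{\lambda t_e})^2 + \mathbb{E}[t_e e^{\lambda t_e}]^2$. For $\lambda\leq 0$ under only \eqref{eq: two_moments}, the key observation is that $e^{\lambda t_e}\leq 1$, which allows monotonicity to replace the entropy bound, leaving the milder constant $(\mathbb{E}e^{2\lambda t_e})^{-1}$. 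These two regimes genuinely require different calculations, and arranging for the correct power of $\lambda$ and the weight $1-\log F(t_e)$ to emerge simultaneously from the bit-by-bit log-Sobolev expansion is the subtlest part of the argument.
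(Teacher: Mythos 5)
Your overall architecture — Bernoulli encoding of the edge weights, bit-level log-Sobolev, control of the discrete increments via Lemma~\ref{lem: DHS_lem}, the emergence of the weight $1-\log F(t_e)$ by summing over bits, and the use of Proposition~\ref{prop: variation} to decouple $e^{2\lambda F_m}$, with separate arguments for $\lambda\ge 0$ and $\lambda\le 0$ yielding the two constants $C_\lambda$ — matches the paper's proof quite closely (Lemma~\ref{lem: bern} followed by Theorems~\ref{prop: intermediate_1} and \ref{prop: intermediate}).

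There is, however, a genuine flaw in your first reduction. You claim that a ``martingale-difference identity'' lets you absorb $\sum_k \mathbb{E}[Ent_{e_j}(V_k^2)]$ into $\mathbb{E}[Ent_{e_j}(G^2)]$ and thereby reduce to bounding $\sum_e \mathbb{E}[Ent_e(G^2)]$. This step does not work: martingale orthogonality is an $L^2$ statement, and it commutes with quadratic functionals such as the squared discrete derivatives $\mathbb{E}_\pi(\Delta_{e,j}\,\cdot\,)^2$, but not with the nonlinear entropy functional $Ent_e(\cdot)$. One cannot conclude $\sum_k \mathbb{E}[Ent_e(V_k^2)]\le \mathbb{E}[Ent_e(G^2)]$. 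The correct order of operations, which is what Lemma~\ref{lem: bern} (following \cite[Lemma~6.3]{DHS}) actually implements, is to first apply tensorization and the Bernoulli log-Sobolev inequality to each $V_k$, obtaining $Ent(V_k^2)\le \sum_{e,j}\mathbb{E}_\pi(\Delta_{e,j}V_k)^2$, and only then use that $(\Delta_{e,j}V_k)_{k\ge 1}$ is a martingale difference sequence in $L^2$, so $\sum_k\mathbb{E}_\pi(\Delta_{e,j}V_k)^2=\mathbb{E}_\pi(\Delta_{e,j}G)^2$. This gives $\sum_k Ent(V_k^2)\le\sum_{e,j}\mathbb{E}_\pi(\Delta_{e,j}G)^2$ directly, without ever passing through the local entropies of $G$. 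Your subsequent bit-level analysis is applied to the correct object, so the error is in the stated intermediate step rather than in the final target; but as written the reduction is invalid.

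Two smaller points of imprecision. First, you describe the $\lambda\le 0$ constant $(\mathbb{E}e^{2\lambda t_e})^{-1}$ as coming from ``$e^{\lambda t_e}\le 1$''; the actual mechanism in the paper is to evaluate $H^2$ at the minimum configuration $\omega_e=\vec 0$ (using monotonicity of $G$ in $t_e$ for $\lambda\le 0$), then observe $H^2(\omega_{e^c},\vec 0)\le \mathbb{E}_{\pi_e}H^2/\mathbb{E}_\mu e^{2\lambda t_e}$, and finish with the Chebyshev association inequality — a more specific chain than your sketch suggests. Second, the real technical work in the $\lambda\ge 0$ case is the uniform-variable integral estimate (Lemma~\ref{lma: uniformvar}), which controls the sum over $j$ of the bit-increment contributions and produces the $F(D_{z,e}^-)(1-\log_2 F(D_{z,e}^-))$ factor; your ``integration-by-parts / change-of-variable'' language gestures at this but glosses over the fact that three separate regimes ($a\le\tau\le 1/2$, $\tau\le a\le 1/2$, $\tau\le 1/2$ with $a$ large) must be handled and then resummed over $j=1,2,\dots$ to obtain the logarithmic weight.
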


We will prove this in a couple of steps. First we use the Bernoulli encoding from \cite{DHS} to give an upper bound (Lemma~\ref{lem: bern} below) in terms of discrete derivatives relative to Bernoulli sequences. Next we split into two cases, $\lambda \geq 0$ and $\lambda \leq 0$. The first is handled in Proposition~\ref{prop: intermediate_1} and the second in Proposition~\ref{prop: intermediate}. These three results will prove Theorem~\ref{thm: entropy_bound}.

\subsection{Bernoulli encoding}
\label{sec: benc}
We will now view our edge variables as the push-forward of Bernoulli sequences. Specifically, for each edge $e$, let $\Omega_e$ be a copy of $\{0,1\}^\mathbb{N}$ with the product sigma-algebra. Let also $\Omega_B = \prod_e \Omega_e$ with product sigma-algebra and measure $\pi := \prod_e \pi_e$, where $\pi_e$ is a product of the form $\prod_{j \geq 1} \pi_{e,j}$ with $\pi_{e,j}$ uniform on $\{0,1\}$. An element of $\Omega_B$ will be denoted
\[
\omega_B = \left\{ \omega_{e,j} : e \in \mathcal{E}^d,~j \geq 1 \right\}\ .
\]
For fixed $e$, and $\omega_e \in \Omega_e$, define the random variable
\begin{equation}
\label{eqn: Uedef}
U_e(\omega_e)=\sum_{j=1}^\infty \frac{\omega_{e,j}}{2^j}.
\end{equation}
Under $\pi$, for each $e$, $U_e(\omega_e)$ is uniformly distributed on $[0,1]$, and the collection $\{U_e(\omega_e)\}_{e\in \mathcal{E}^d}$ is independent. 

We denote by $F(x)$, $x\ge 0$ the distribution function of $\mu$, and by $I$ the infimum of the support:
\[I=\inf\{x:F(x)>0\}.\] 
The right-continuous inverse of $F$ is
\[F^{-1}(y) = \inf\{x: F(x) \ge y\}.\]
If $U$ is uniformly distributed on $[0,1]$, we have 
\begin{equation}
\label{eqn: uniform}
\mathbb{P}(F^{-1}(U)\le x) = \mathbb{P}(U\le F(x))= F(x),
\end{equation}
that is, $F^{-1}(U)$ has distribution $\mu$. Defining the measurable map $\varphi_e:\Omega_e \to \mathbb{R}$ by
\begin{equation}\label{eqn: Tedef}
\varphi_e(\omega_e) = F^{-1}(U_e(\omega_e)),
\end{equation}
we see that the distribution of $\varphi_e$ under $\pi$ is $\mu$. Moreover, since $\varphi_e$ is a composition of monotone functions of the the $\omega_{e,j}$, we have the important monotonicity property:
\begin{equation}
\label{eqn: monotone}
\varphi_e(\omega) \leq \varphi_e(\hat \omega) \quad  \text{ if } \quad \omega_j \leq \hat \omega_j \text{ for all } j\ge 1.
\end{equation}

Define the product map $\varphi:= \prod_e \varphi_e: \Omega_B \to \Omega=[0,\infty)^{\mathcal{E}^d}$ with action
\[
\varphi(\omega_B) = (\varphi_e(\omega_e) : e \in \mathcal{E}^d)\ .
\]
By \eqref{eqn: uniform}, $\pi \circ \varphi^{-1} = \mathbb{P}$.

In what follows, we will consider functions $f$ (in particular, $G=e^{\lambda F_m}$) on the original space $\Omega$ as functions on $\Omega_B$, through the map $\varphi$. We will suppress mention of this in the notation and, for instance, write $f(\omega_B)$ to mean $f \circ \varphi(\omega_B)$. We will estimate discrete derivatives, so for a function $f: \Omega_B \to \mathbb{R}$, set
\[
\left(\Delta_{e,j} f\right)(\omega_B) = f(\omega_B^{e,j,+}) - f(\omega_B^{e,j,-})\ ,
\]
where $\omega_B^{e,j,+}$ agrees with $\omega_B$ except possibly at $\omega_{e,j}$, where it is $1$, and $\omega_B^{e,j,-}$ agrees with $\omega_B$ except possibly at $\omega_{e,j}$, where it is 0. We now view $G = e^{\lambda F_m}$ as a function of sequences of Bernoulli variables, as in \cite{DHS}. Then, exactly the same proof as in \cite[Lemma~6.3]{DHS} gives
\begin{lma}\label{lem: bern}
Assume \eqref{eq: exponential_moments} and $\lambda \in [0,\alpha/2)$ or \eqref{eq: two_moments} and $\lambda \leq 0$. We have the following inequality:
\[
\sum_{k=1}^\infty Ent(V_k^2) \leq \sum_{e,j} \mathbb{E}_\pi\left(\Delta_{e,j}G\right)^2\ .
\]
\end{lma}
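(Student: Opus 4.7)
The plan is to follow the proof of \cite[Lemma~6.3]{DHS}: combine tensorization of entropy over the Bernoulli factors $\omega_{e,j}$ with the Bernoulli log-Sobolev inequality, then exploit the martingale structure of the $V_k$ to replace the discrete derivatives of the $V_k$ by a single discrete derivative of $G$.

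First, I apply Proposition~\ref{prop: tensor} on the product space $(\Omega_B, \pi)$ to obtain, for each $k$,
\[
Ent(V_k^2) \le \sum_{e,j} \mathbb{E}_\pi\, Ent_{\omega_{e,j}}(V_k^2).
\]
This uses that $V_k \in L^2(\pi)$, which in turn follows from $G = e^{\lambda F_m} \in L^2(\pi)$; the square-integrability of $G$ is guaranteed under either of the two moment assumptions in the statement. Next, I apply the log-Sobolev inequality for a uniform Bernoulli $\{0,1\}$ variable, $Ent_\omega(h^2) \le \tfrac{1}{2}(h(1)-h(0))^2$, with $h = |V_k|$, to get
\[
Ent_{\omega_{e,j}}(V_k^2) \le \tfrac{1}{2}\bigl(\Delta_{e,j} V_k\bigr)^2.
\]

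The heart of the argument is the identity
\[
\sum_{k=1}^\infty \mathbb{E}_\pi\bigl(\Delta_{e,j} V_k\bigr)^2 = \mathbb{E}_\pi\bigl(\Delta_{e,j} G\bigr)^2 \qquad \text{for each fixed } (e,j).
\]
Writing $e = e_i$ and noting that $\mathbb{E}[\,\cdot\,\mid \mathcal{F}_k]$ averages over $\omega_{e_\ell, j'}$ for all $\ell > k$ and $j' \ge 1$, a short case analysis yields $\Delta_{e_i,j} V_k = 0$ for $k < i$, $\Delta_{e_i,j} V_i = \mathbb{E}[\Delta_{e_i,j} G \mid \mathcal{F}_i]$, and $\Delta_{e_i,j} V_k = \mathbb{E}[\Delta_{e_i,j} G \mid \mathcal{F}_k] - \mathbb{E}[\Delta_{e_i,j} G \mid \mathcal{F}_{k-1}]$ for $k > i$. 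Hence $(\Delta_{e_i,j} V_k)_{k \ge i}$ are orthogonal martingale increments whose partial sums telescope to $\mathbb{E}[\Delta_{e_i,j} G \mid \mathcal{F}_N]$, which converges in $L^2$ to $\Delta_{e_i,j} G$ by the $L^2$ martingale convergence theorem. Orthogonality of the increments then yields the claimed identity.

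Combining the three ingredients---tensorization, Bernoulli log-Sobolev, and the martingale identity---and switching the order of summation over $k$ and $(e,j)$ produces the statement (with the harmless constant $\tfrac{1}{2}$ to spare). The main technical point will be the commutation bookkeeping: $\Delta_{e_i,j}$ commutes with $\mathbb{E}[\,\cdot\,\mid \mathcal{F}_k]$ exactly when $k \ge i$, and the asymmetric behavior at the endpoint $k = i$---where $\Delta_{e_i,j}\mathbb{E}[G \mid \mathcal{F}_{i-1}] = 0$ even though $\mathbb{E}[\Delta_{e_i,j} G \mid \mathcal{F}_{i-1}]$ need not vanish---is precisely what produces the full $\mathbb{E}_\pi(\Delta_{e_i,j} G)^2$ on the right-hand side rather than a variance of $\Delta_{e_i,j} G$.
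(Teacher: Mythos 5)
Your proof is correct and follows the same route the paper takes, namely that of \cite[Lemma~6.3]{DHS}: tensorize entropy over the Bernoulli coordinates $\omega_{e,j}$, apply the log-Sobolev inequality for a uniform $\{0,1\}$ variable (with the triangle inequality $|\Delta_{e,j}|V_k||\le|\Delta_{e,j}V_k|$), and use the orthogonality and telescoping of the increments $\Delta_{e,j}V_k$ to collapse $\sum_{k}\mathbb{E}_\pi(\Delta_{e,j}V_k)^2$ into $\mathbb{E}_\pi(\Delta_{e,j}G)^2$. One small notational point worth fixing: since $\Delta_{e_i,j}G$ does not factor through $\varphi$, the conditional expectations in your case analysis should be taken with respect to $\sigma(\omega_{e_1},\dots,\omega_{e_k})$ rather than the pulled-back $\mathcal{F}_k=\sigma(\varphi_{e_1},\dots,\varphi_{e_k})$, which may be strictly coarser when $\mu$ has atoms; for $G$ itself these two conditional expectations coincide, so the telescoping identity and the $L^2$ martingale convergence go through unchanged.
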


\subsection{Derivative bound: positive exponential}
For the next derivative bounds we continue with $G = e^{\lambda F_m}$ and set $H$ as the derivative of $G$; that is, $H = \lambda e^{\lambda F_m}$.

\begin{thm}\label{prop: intermediate_1}
Assume \eqref{eq: exponential_moments}. For some $\newconstant\label{Cmu}>0$, $C_\lambda = Ent_e(t_e e^{\lambda t_e})^2 + \mathbb{E}[t_e e^{\lambda t_e}]^2$ and all $\lambda \in [0,\alpha/2)$,
\[
\sum_{e,j} \mathbb{E}_\pi \left( \Delta_{e,j} G \right)^2 \leq \lambda^2 \frac{\oldconstant{Cmu} C_\lambda}{\#B_m} \sum_{z\in B_m} ~\mathbb{E} \left[ e^{2\lambda F_m} \sum_{e \in Geo(z,z+x)} (1-\log F(t_e)) \right] \text{ for all } x \in \mathbb{Z}^d\ .
\]
\end{thm}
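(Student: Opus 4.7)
The plan is to bound each discrete derivative $\Delta_{e,j}G$ pointwise by integrating the partial derivative of $s\mapsto e^{\lambda F_m(t_{e^c},s)}$ over the interval $[t^-,t^+]=[\varphi_e(\omega_B^{e,j,-}),\varphi_e(\omega_B^{e,j,+})]$, apply Cauchy--Schwarz to produce a free factor $(t^+-t^-)$, and then sum over $j$ via a Bernoulli-encoding identity that converts $\sum_j$ into an integral against $\mu$ carrying the factor $(1-\log F(t_e))$. The constant $C_\lambda$ arises in this last step because the effective random variable here is $t_e e^{\lambda t_e}$ rather than $t_e$.

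Concretely, since $F_m(t_{e^c},\cdot)$ is non-decreasing and $1$-Lipschitz, and Lemma~\ref{lem: DHS_lem} yields $\partial_s T_z(t_{e^c},s)=\mathbf{1}_{\{s<D_{z,e}\}}$ a.e., for $\lambda\geq 0$ one has
\[
0\leq \Delta_{e,j}G = \lambda\int_{t^-}^{t^+}e^{\lambda F_m(t_{e^c},s)}\cdot\frac{1}{\#B_m}\sum_{z\in B_m}\mathbf{1}_{\{s<D_{z,e}\}}\,ds.
\]
Squaring, applying Cauchy--Schwarz in the $s$-integral and Jensen's inequality to the average over $z$ gives
\[
(\Delta_{e,j}G)^2\leq \lambda^2(t^+-t^-)\cdot\frac{1}{\#B_m}\sum_{z\in B_m}\int_{t^-}^{t^+}e^{2\lambda F_m(t_{e^c},s)}\mathbf{1}_{\{s<D_{z,e}\}}\,ds.
\]
Using the Lipschitz bound $F_m(t_{e^c},s)\leq F_m(t_{e^c},t_e)+(s-t_e)_+$, for $\lambda\geq 0$ one may replace $e^{2\lambda F_m(t_{e^c},s)}$ by $e^{2\lambda F_m}\cdot e^{2\lambda(s-t_e)_+}$, while a short case analysis (using that $\{s<D_{z,e}\}$ is decreasing in $s$) absorbs the indicator into $\mathbf{1}_{\{e\in Geo(z,z+x)\}}$ up to a controllable boundary correction when $t_e\in[t^-,D_{z,e})$, as in \cite{DHS}.

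The crux is the Bernoulli-encoding identity: for any non-negative $\pi_e$-measurable function $\phi$,
\[
\sum_j\mathbb{E}_{\pi_e}\!\left[(t^+-t^-)\int_{t^-}^{t^+}e^{2\lambda(s-t_e)_+}\,ds\cdot\phi(t_e)\right]\leq \oldconstant{Cmu}\,C_\lambda\,\mathbb{E}_\mu[(1-\log F(t_e))\phi(t_e)].
\]
This is proved via the parametrization $t^\pm=F^{-1}(U^\pm)$ with $U^+-U^-=2^{-j}$, organising the sum over the binary expansion of $U_e$: the $(1-\log F(t_e))$ factor emerges by summing the dyadic $2^{-j}$-contributions exactly as in \cite[Lemma 6.6]{DHS}, while the integrand $(t^+-t^-)\int_{t^-}^{t^+}e^{2\lambda(s-t_e)_+}\,ds$ essentially measures the squared increment of $t_ee^{\lambda t_e}$, so a modified Poincaré-type estimate for functions of Bernoulli variables produces the constant $C_\lambda=Ent_e(t_ee^{\lambda t_e})^2+\mathbb{E}[t_ee^{\lambda t_e}]^2$. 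Applying the identity with $\phi=\mathbf{1}_{\{e\in Geo(z,z+x)\}}$ (the factor $e^{2\lambda F_m}$ having been factored out of the $j$-sum and kept inside the final $\mathbb{E}$), then summing over $e$ and over $z\in B_m$, yields the claim.

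The main obstacle is the Bernoulli identity in the last step. In the analogous \cite[Proposition 6.4]{DHS} only $\mathbb{E}t_e^2<\infty$ is required, because the random variable of interest is simply $t_e$; here the exponential reweighting of $G$ forces us to work instead with $t_ee^{\lambda t_e}$, which need not satisfy a clean log-Sobolev inequality. The correct substitute combines both an $L^2$-contribution $\mathbb{E}[t_ee^{\lambda t_e}]^2$ and an entropy contribution $Ent_e(t_ee^{\lambda t_e})^2$, whose sum is $C_\lambda$; the requirement $\lambda<\alpha/2$ in \eqref{eq: exponential_moments} is used precisely to guarantee that both pieces are finite.
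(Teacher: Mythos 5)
Your proposal captures the right high-level intuition (Bernoulli encoding, sum over dyadic scales to produce the $1-\log F$ factor, invoke an entropy-type inequality to produce $C_\lambda$), and your opening FTC/Cauchy--Schwarz manipulations are sound. But the crux -- your ``Bernoulli-encoding identity'' -- is where the proposal breaks down, and it does so in more than one way.

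First, the identity cannot be applied as stated because the factor $e^{2\lambda F_m}$ has \emph{not} actually been decoupled from the $j$-sum. After the Lipschitz bound you obtain
\[
(\Delta_{e,j}G)^2 \leq \lambda^2(t^+-t^-)\,e^{2\lambda F_m}\,\frac{1}{\#B_m}\sum_z\int_{t^-}^{t^+}e^{2\lambda(s-t_e)_+}\mathbf{1}_{\{s<D_{z,e}\}}\,ds\ ,
\]
but $e^{2\lambda F_m}=e^{2\lambda F_m(t_{e^c},t_e)}$ still depends on $t_e=\varphi_e(\omega_e)$, which varies with $\omega_{e,j}$. Your identity has only $\phi(t_e)$ on the left, with no provision for a $t_e$-dependent weight that correlates with the integrand. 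In the paper's proof, this is exactly the role of the weight $h(x)=H^2(\omega_{e^c},x)$ carried through Lemma~\ref{lma: uniformvar}, where its monotonicity is used via the Chebyshev association inequality to produce the $F(D_{z,e}^-)(1-\log_2 F(D_{z,e}^-))$ factor \emph{together} with the weight. Without a weighted form of your identity, the step ``the factor $e^{2\lambda F_m}$ having been factored out of the $j$-sum'' is not justified.

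Second, the mechanism you invoke for $C_\lambda$ is not substantiated. The heuristic that $(t^+-t^-)\int_{t^-}^{t^+}e^{2\lambda(s-t_e)_+}\,ds$ ``essentially measures the squared increment of $t_e e^{\lambda t_e}$'' is not accurate: with $t_e=t^-$ the left side is $(t^+-t^-)\cdot\frac{e^{2\lambda(t^+-t^-)}-1}{2\lambda}$, whereas $(t^+ e^{\lambda t^+}-t^- e^{\lambda t^-})^2$ carries an extra factor $e^{2\lambda t^-}$ and a very different small-increment expansion, so no ``modified Poincar\'e-type estimate'' directly converts one into the other. In the paper, $C_\lambda$ does not come from a Poincar\'e inequality for $t_e e^{\lambda t_e}$ at all. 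It arises from Proposition~\ref{prop: variation} applied to $\mathbb{E}_{\pi_e}\bigl[(t_e e^{\lambda t_e})^2(N+1)\bigr]$ where $N=\log_2\frac{1}{1-U_e}$ has exponential distribution: the variational characterization of entropy splits this into $Ent_e(t_e e^{\lambda t_e})^2 + \mathbb{E}_\mu(t_e e^{\lambda t_e})^2\log\mathbb{E}_{\pi_e} e^{(N+1)/2}$. The appearance of the random variable $N$ itself comes out of the case analysis in Lemma~\ref{lma: uniformvar} when $F(D_{z,e}^-)>1/2$ (inequality \eqref{eqn: a>1/2}), a contribution your Cauchy--Schwarz bound has no analogue of. So the entropy term in $C_\lambda$ is not an artifact you can wave at with ``hypercontractivity''; it is tied to a specific computation that your outline never performs.

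To repair the proposal you would need (i) a weighted version of your dyadic-summation lemma allowing a nondecreasing weight $h(U_e)$ on the left, which is essentially Lemma~\ref{lma: uniformvar}; and (ii) the duality step $\mathbb{E}[XY]\leq Ent X + \mathbb{E}X\log\mathbb{E}e^Y$ with $X=(t_e e^{\lambda t_e})^2$ and $Y$ the logarithmic term, to produce $C_\lambda$ precisely. As written, these two steps are the substance of the theorem, and the proposal replaces both with assertions.
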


\begin{proof}
We first note that
\[\mathbb{E}_{\pi_e}(\Delta_{e,j} G)^2 = 2\mathbb{E}_{\pi_e}(G(\omega_B)-G(\omega_B^{e,j,-}))_+^2.\]

By the mean value theorem and monotonicity of $H$ in the variable $\omega_e$, we have 
\[0\le G(\omega_B)-G(\omega_B^{e,j,-}) \le H(\omega_B)(F_m(\omega_B)-F_m(\omega_B^{e,j,-}))^2_+.\]
Convexity of $x\mapsto x^2$ gives the bound:
\begin{equation}
\label{eqn: DeltaGbound}
\mathbb{E}_{\pi_e}(\Delta_{e,j}G)^2 \le \frac{2}{\# B_m} \sum_{z\in B_m} \mathbb{E}_{\pi_e}H^2(\omega_{e^c},U_e(\omega_e))\left(T_z(\omega_{e^c},U_e(\omega_e))-T_z(\omega_{e^c},U_e(\omega_e)-2^{-j})\right)^2\mathbf{1}_{\{U_e(\omega_e)\ge 2^{-j}\}}.
\end{equation}
Here we have written $H^2(\omega_{e^c},U_e(\omega_e))$ and $T_z(\omega_{e^c},U_e(\omega_e))$ to denote $H^2$ and $T_z$, taken as functions of the Bernoulli variables $\omega_{e'}$, $e\neq e'$, and the (independent) uniform variable $U_e(\omega_e)$. 

To estimate the expectation over $U_e(\omega_e)$, we use the following key lemma. The abstract formulation here was suggested by a referee for an earlier version of the current paper. The referee's proof below summarizes the technical ideas present in a computation using Bernoulli variables that appeared in the previous version, and which more closely followed our argument in \cite{DHS}.
\begin{lma} \label{lma: uniformvar}
Let $a,\tau\in [0,1]$. Suppose $h$ and $f$ are nonnegative, non-decreasing on $[0,1]$, and that $f$ is constant on $[a,1]$. If $\tau \le 1/2$, then 
\begin{equation}\label{eqn: a>1/2}
\int_\tau^1 h(x)(f(x)-f(x-\tau))^2\,\mathrm{d}x \le \int_0^1 h(x)f^2(x) \mathbf{1}_{\{x\ge 1-\tau\}}\,\mathrm{d}x.
\end{equation}
Moreover:
\begin{enumerate}
\item If $a\le \tau \le \frac{1}{2}$,
\begin{equation} \label{eqn: a<tau}
\int_\tau^1h(x)(f(x)-f(x-\tau))^2\,\mathrm{d}x\le 2a\int_0^1 h(x)f^2(x)\,\mathrm{d}x.
\end{equation}
\item If $\tau\le a\le \frac{1}{2}$,
\begin{equation}
\label{eqn: a>tau}
\int_\tau^1 h(x)(f(x)-f(x-\tau))^2\,\mathrm{d}x \le 2\tau \int_0^1 h(x)f^2(x)\,\mathrm{d}x.
\end{equation}
\end{enumerate}
\begin{proof}
If $\tau \le 1/2$, then
\begin{align*}
\int_\tau^1 h(x)(f(x)-f(x-\tau))^2\,\mathrm{d}x &\le \int_{\tau}^1 h(x)(f^2(x)-f^2(x-\tau))\,\mathrm{d}x\\
&\le \int_\tau^1 h(x)f^2(x)-h(x-\tau)f^2(x-\tau)\,\mathrm{d}x,
\end{align*}
where we have used the monotonicity of $f$ in the first inequality, and $h$ in the second.
Next,
\begin{align*}
 \int_\tau^1 h(x)f^2(x)-h(x-\tau)f^2(x-\tau)\,\mathrm{d}x &= \int_\tau^1 h(x)f^2(x)\, \mathrm{d}x - \int_0^{1-\tau}h(x)f^2(x)\,\mathrm{d}x\\
&\le \int_{1-\tau}^1 h(x)f^2(x)\,\mathrm{d}x\\
&= \int_0^1 h(x)f^2(x) \mathbf{1}_{\{x\ge 1-\tau\}}\,\mathrm{d}x.
\end{align*}
We have used the non-negativity of $h(x)f^2(x)$ to drop the integral over $[0,\tau]$. This shows \eqref{eqn: a>1/2}.

If $a\le \tau\le \frac{1}{2}$, since $f$ is constant over $[a,1]$,
\[\int_\tau^1 h(x)(f(x)-f(x-\tau))^2\,\mathrm{d}x = \int_\tau^{a+\tau}h(x)(f(x)-f(x-\tau))^2\,\mathrm{d}x.\]
By monotonicity, the right side is no bigger than
\[\int_{\tau}^{a+\tau} h(x)f^2(x)\,\mathrm{d}x = \int_{\tau}^1 h(x)f^2(x)\mathbf{1}_{\{x\le a+\tau\}}\,\mathrm{d}x.\]
The Chebyshev association inequality \cite[Theorem~2.14]{BLM} now gives \eqref{eqn: a<tau}:
\begin{align*}
\int_{\tau}^1 h(x)f^2(x)\mathbf{1}_{\{x\le a+\tau\}}\,\mathrm{d}x &\le \frac{1}{1-\tau}\int^1_\tau h(x)f^2(x)\,\mathrm{d}x \int_\tau^1 \mathbf{1}_{\{x\le a+\tau\}}\,\mathrm{d}x\\
&= \frac{a}{1-\tau}\int_\tau^1 h(x)f^2(x)\,\mathrm{d}x\\
&\le 2a\int_0^1 h(x)f^2(x)\,\mathrm{d}x.
\end{align*}

When $\tau \le a\le \frac{1}{2}$, we again have
\[\int_\tau^1 h(x)(f(x)-f(x-\tau))^2\,\mathrm{d}x=\int_\tau^{a+\tau} h(x)(f(x)-f(x-\tau))^2\,\mathrm{d}x.\]
Expanding the square and using monotonicity as in the case $\tau \ge 1/2$, we find
\begin{align*}
\int_\tau^{a+\tau} h(x)(f(x)-f(x-\tau))^2\,\mathrm{d}x &\le \int_\tau^{a+\tau}h(x)f^2(x)\,\mathrm{d}x-\int_0^ah(x)f^2(x)\,\mathrm{d}x\\
&= \int_a^{a+\tau} h(x)f^2(x)\,\mathrm{d}x - \int_0^\tau h(x)f^2(x)\,\mathrm{d}x\\
&\le \int_a^1 h(x)f^2(x)\mathbf{1}_{\{x\le a+\tau\}}\,\mathrm{d}x\\
&\le \frac{\tau}{1-a}\int_0^1 h(x)f^2(x)\,\mathrm{d}x \\
&\le 2\tau \int_0^1h(x)f^2(x)\,\mathrm{d}x.
\end{align*}
In the second-to-last step, we have used Chebyshev's association inequality. 
\end{proof}
\end{lma}

We use Lemma \ref{lma: uniformvar} to obtain an estimate for the sum $\sum_{e,j} \mathbb{E}_\pi\left(\Delta_{e,j}G\right)^2$. The result we are after is
\begin{lma}
For all $\lambda \in [0,\alpha/2)$ and $e$,
\begin{equation}\label{eqn: lemmadisplay}
\mathbb{E}_{\pi_e} \sum_{j=1}^\infty (\Delta_{e,j} G)^2 \leq 8 \frac{1}{\# B_m} \sum_{z \in B_m} F(D_{z,e}^-)(1-\log_2 F(D_{z,e}^-)) \mathbb{E}_{\pi_e}[L_z (1+N)]\
\end{equation}
where $L_z= H^2(\omega_{e^c},U_e(\omega_e))\min\{D_{z,e},\varphi_e(\omega_e)\}^2$, and
\[N= \log_2 \frac{1}{1-U_e(\omega_e)}.\]
\end{lma}
\begin{proof}
Write:
\begin{align}
&\quad \mathbb{E}_{\pi_e} H^2(\omega_{e^c},U_e(\omega_e))\left(T_z(\omega_{e^c},U_e(\omega_e))-T_z(\omega_{e^c},U_e(\omega_e)-2^{-j})\right)^2\mathbf{1}_{\{U_e(\omega_e)\ge 2^{-j}\}} \nonumber \\
=&\quad \int_0^1  H^2(\omega_{e^c},x)\left(T_z(\omega_{e^c},x)-T_z(\omega_{e^c},x-2^{-j})\right)^2\mathbf{1}_{\{x\ge 2^{-j}\}}\,\mathrm{d}x.
\end{align}
We will apply Lemma \ref{lma: uniformvar} to the quantity on the right in \eqref{eqn: DeltaGbound} with $h(x)=H^2(\omega_{e^c},x)$, $f(x)=T_z(\omega_{e^c},x)-T_z(\omega_{e^c},0)$, $a=F(D_{z,e}^-)$, and $\tau = 2^{-j}$.

For $j$ such that $F(D_{z,e}^-) < 2^{-j}$ ($a < \tau$), we use \eqref{eqn: a<tau}:
\begin{align*}
&\, \int_0^1  H^2(\omega_{e^c},x)\left(T_z(\omega_{e^c},x)-T_z(\omega_{e^c},x-2^{-j})\right)^2\mathbf{1}_{\{x\ge 2^{-j}\}}\,\mathrm{d}x \\
\le & \, 2F(D_{z,e}^-)\int_0^1  H^2(\omega_{e^c},x)(T_z(\omega_{e^c},x)-T_z(\omega_{e^c},0))^2\,\mathrm{d}x.
\end{align*}
By Lemma \ref{lem: DHS_lem},
\[0\le T_z(\omega_{e^c},x)-T_z(\omega_{e^c},0) \le \min\{D_{z,e},\varphi_e(\omega_e)\},\]
and so
\[\int_0^1  H^2(\omega_{e^c},x)(T_z(\omega_{e^c},x)-T_z(\omega_{e^c},0))^2\,\mathrm{d}x \le  \int_0^1 L_z(\omega_{e^c},x)\,\mathrm{d}x.\]
If $j$ is such that $2^{-j} \le F(D_{z,e}^-) \le \frac{1}{2}$ ($\tau\le a \le \frac{1}{2}$), we apply \eqref{eqn: a>tau} and obtain
\begin{align*}
&\, \int_0^1  H^2(\omega_{e^c},x)\left(T_z(\omega_{e^c},x)-T_z(\omega_{e^c},x-2^{-j})\right)^2\mathbf{1}_{\{x\ge 2^{-j}\}}\,\mathrm{d}x \\
\le & \, 2\cdot 2^{-j} \int_0^1  L_z(\omega_{e^c},x)\,\mathrm{d}x
\end{align*}

We now sum over $j$ in \eqref{eqn: DeltaGbound}: in case $F(D^-_{z,e}) \le \frac{1}{2}$, we obtain
\begin{align}
&\,\sum_{j=1}^\infty \mathbb{E}_{\pi_e}(\Delta_{e,j}G)^2 \nonumber \\
\le &\, \sum_{j=1}^\infty \frac{2}{\# B_m} \sum_{z\in B_m} \mathbb{E}_{\pi_e}H^2(\omega_{e^c},U_e(\omega_e))\left(T_z(\omega_{e^c},U_e(\omega_e))-T_z(\omega_{e^c},U_e(\omega_e)-2^{-j})\right)^2\mathbf{1}_{\{U_e(\omega_e)\ge 2^{-j}\}} \nonumber \\
\le &\,\frac{4}{\# B_m} \sum_{z\in B_m} F(D_{z,e}^-)\sum_{j: 2^{-j} > F(D_{z,e}^-)} \int_0^1 L_z(\omega_{e^c},x)\,\mathrm{d}x \nonumber \\
&\,  + \frac{4}{\# B_m} \sum_{z\in B_m} \sum_{j: 2^{-j} \le F(D_{z,e}^-)} 2^{-j} \int_0^1  L_z(\omega_{e^c},x)\,\mathrm{d}x \nonumber \\
\le &\,\frac{8}{\# B_m} \sum_{z\in B_m} F(D_{z,e}^-)(1-\log_2 F(D_{z,e}^-))\int_0^1 L_z(\omega_{e^c},x)\,\mathrm{d}x. \label{eqn: jless1/2}
\end{align}

If instead $F(D_{z,e}^-)\ge \frac{1}{2}$, we use \eqref{eqn: a>1/2} in the sum over $j$ such that $2^{-j} \le F(D_{z,e}^-)$:
\begin{align}
&\, \sum_{j: 2^{-j} \le F(D_{z,e}^-)} \mathbb{E}_{\pi_e}H^2(\omega_{e^c},U_e(\omega_e))\left(T_z(\omega_{e^c},U_e(\omega_e))-T_z(\omega_{e^c},U_e(\omega_e)-2^{-j})\right)^2\mathbf{1}_{\{U_e(\omega_e)\ge 2^{-j}\}} \nonumber \\
= &\, \sum_{j: 2^{-j} \le F(D_{z,e}^-)} \int_0^1 H^2(\omega_{e^c},x)\left(T_z(\omega_{e^c},x)-T_z(\omega_{e^c},x-2^{-j})\right)^2\mathbf{1}_{\{x\ge 2^{-j}\}}\,\mathrm{d}x \nonumber \\
\le& \, \int_0^1 L_z(\omega_{e^c},x) \sum_{j:2^{-j}\le F(D_{z,e}^-)} \mathbf{1}_{\{x\ge 1-2^{-j}\}}\,\mathrm{d}x \nonumber \\
\le& \, \int_0^1 L_z(\omega_{e^c},x) \log_2 \frac{1}{1-x}\,\mathrm{d}x \nonumber \\
\le & \, 2F(D^-_{z,e})  \int_0^1 L_z(\omega_{e^c},x) \log_2 \frac{1}{1-x}\,\mathrm{d}x. \label{eqn: -log(1-x)}
\end{align}
On the other hand, for the sum over $j$ such that $2^{-j} > F(D_{z,e}^-)$, we drop the indicator $\mathbf{1}_{\{x\ge 1-2^{-j}\}}$:
\begin{align}
&\, \sum_{j: 2^{-j} > F(D_{z,e}^-)} \mathbb{E}_{\pi_e}H^2(\omega_{e^c},U_e(\omega_e))\left(T_z(\omega_{e^c},U_e(\omega_e))-T_z(\omega_{e^c},U_e(\omega_e)-2^{-j})\right)^2\mathbf{1}_{\{U_e(\omega_e)\ge 2^{-j}\}} \nonumber \\
= &\, \sum_{j: 2^{-j} >F(D_{z,e}^-) } \int_0^1 H^2(\omega_{e^c},x)\left(T_z(\omega_{e^c},x)-T_z(\omega_{e^c},x-2^{-j})\right)^2\mathbf{1}_{\{x\ge 2^{-j}\}}\,\mathrm{d}x \nonumber \\
\le &\, -\log_2 F(D_{z,e}^-)\int_0^1 L_z(\omega_{e^c},x) \,\mathrm{d}x\nonumber \\
\le &\, -2F(D^-_{z,e})\log_2 F(D^-_{z,e})\int_0^1 L_z(\omega_{e^c},x) \,\mathrm{d}x. \label{eqn: logF}
\end{align}
Combining \eqref{eqn: jless1/2}, \eqref{eqn: -log(1-x)} and \eqref{eqn: logF}, the lemma follows.
\end{proof}

We now finish the proof of Theorem \ref{prop: intermediate_1}. Integrating \eqref{eqn: lemmadisplay} over variables $e'$, $e'\neq e$ and summing over $e$, and using the identity
\begin{align}
-F(y^-)\log F(y^-) &= -\int\mathbf{1}_{[I,y)}(x)\log F(y^-)\,\mu(\mathrm{d}x) \nonumber \\
&\le -\int \mathbf{1}_{[I,y)}(x)\log F(x)\,\mu(\mathrm{d}x), \label{eqn: intident}
\end{align} 
we obtain:
\begin{equation}\label{eq: broom_handle}
\sum_e \mathbb{E}_\pi \sum_{j=1}^\infty (\Delta_{e,j} G)^2 \leq \frac{8}{\# B_m} \sum_{z \in B_m} \sum_e \mathbb{E}_{e^c} \left[ \int_{[I,D_{z,e})} (1-\log_2 F(t_e))\mathbb{E}_{\pi_e} \left[ L_z (1+N)\right]~\mu(\text{d}t_e)  \right] \\
\end{equation}
Letting $F_{m,e^c}$ be $F_m$ evaluated at the configuration $(t_{e^c},0)$, we can bound $H^2 \leq \lambda^2 e^{2\lambda F_{m,e^c}} e^{2\lambda t_e}$, so
\begin{align*}
\mathbb{E}_{\pi_e} \left[ L_z(N+1) \right] \leq \mathbb{E}_{\pi_e}\left[ H^2 t_e^2 (N+1) \right] &\leq \lambda^2 e^{2 \lambda F_{m,e^c}} \mathbb{E}_{\pi_e} \left[ (t_e e^{\lambda t_e})^2 (N+1) \right] \\
&\leq H^2 \mathbb{E}_{\pi_e} \left[ (t_e e^{\lambda t_e})^2 (N+1) \right]\ .
\end{align*}
Now since $\lambda < \alpha/2$, $Ent_e(t_ee^{\lambda t_e})^2 < \infty$, so we use Proposition~\ref{prop: variation} to bound the expectation by
\[
2Ent_e (t_e e^{\lambda t_e})^2 + 2 \mathbb{E}_\mu (t_e e^{\lambda t_e})^2 \log \mathbb{E}_{\pi_e} e^{(N+1)/2}\ .
\]
$N$ has exponential distribution with mean $1/\log 2$, so this is bounded by $\newconstant C_\lambda$ independently of $e$. 

Returning to \eqref{eq: broom_handle}, note that by Lemma~\ref{lem: DHS_lem}, if $t_e < D_{z,e}$ then $e$ is in $Geo(z,z+x)$. So applying the bound on $\mathbb{E}_{\pi_e}[L_z N]$, we obtain for some $\newconstant\label{C_70}$
\begin{align*}
\sum_{e,j} \mathbb{E}_\pi \left( \Delta_{e,j} G \right)^2 &\leq \frac{\oldconstant{C_70} C_\lambda}{\# B_m} \sum_{z \in B_m} \sum_e \mathbb{E} \left[ (1-\log F(t_e)) H^2 \mathbf{1}_{\{I \leq t_e < D_{z,e}\}} \right] \\
&\leq \lambda^2 \frac{\oldconstant{C_70} C_\lambda }{\# B_m} \sum_{z \in B_m} \mathbb{E} \left[ e^{2\lambda F_m} \sum_{e \in Geo(z,z+x)} (1-\log F(t_e)) \right]\ .
\end{align*}
\end{proof}

\subsection{Derivative bound: negative exponential}

\begin{thm}\label{prop: intermediate}
Assume \eqref{eq: two_moments}. For some $\newconstant\label{C_42}>0$, $C_\lambda' = \mathbb{E}_{\mu}e^{2\lambda t_e}$ and all $\lambda \leq 0$,
\[
\sum_{e,j} \mathbb{E}_\pi \left( \Delta_{e,j} G \right)^2 \leq \lambda^2 \frac{\oldconstant{C_42}}{C_\lambda' \#B_m} \sum_{z\in B_m} ~\mathbb{E} \left[ e^{2\lambda F_m} \sum_{e \in Geo(z,z+x)} (1-\log F(t_e)) \right] \text{ for all } x \in \mathbb{Z}^d\ .
\]
\end{thm}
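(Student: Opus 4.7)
The proof closely parallels that of Theorem~\ref{prop: intermediate_1}, but exploits a key simplification available for $\lambda\le 0$: since $H^2=\lambda^2 e^{2\lambda F_m}$ is a \emph{non-increasing} function of each Bernoulli coordinate $\omega_{e,j}$, we may bound it uniformly by its value at $U_e=0$, namely $\lambda^2 e^{2\lambda F_{m,e^c}}$. This eliminates the entropy bookkeeping that produced the constant $C_\lambda$ in Theorem~\ref{prop: intermediate_1}; in its place, a simple exponential ``tilt'' in $t_e$ will yield the factor $1/C_\lambda'$.

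Starting from $\mathbb{E}_{\pi_e}(\Delta_{e,j}G)^2 = 2\mathbb{E}_{\pi_e}(G(\omega_B^{e,j,-})-G(\omega_B))_+^2$, the mean value theorem (applied to $G=e^{\lambda F_m}$, which is now non-increasing in $\omega_{e,j}$, with $|H|$ evaluated at the point where it is largest) gives
\[
G(\omega_B^{e,j,-})-G(\omega_B) \le |\lambda|\,e^{\lambda F_{m,e^c}}\,\bigl(F_m(\omega_B)-F_m(\omega_B^{e,j,-})\bigr).
\]
After convexity in $F_m=(\#B_m)^{-1}\sum_z T_z$ and the same passage to an integral in $x=U_e$ as in Theorem~\ref{prop: intermediate_1}, the bound reduces to controlling $\sum_j\int_{2^{-j}}^1(T_z(\omega_{e^c},x)-T_z(\omega_{e^c},x-2^{-j}))^2\,dx$ via Lemma~\ref{lma: uniformvar} with $h\equiv 1$, $f(x)=T_z(\omega_{e^c},x)-T_z(\omega_{e^c},0)$, and $a=F(D_{z,e}^-)$. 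Summing over $j$ exactly as in Theorem~\ref{prop: intermediate_1} yields a term $F(D_{z,e}^-)(1-\log F(D_{z,e}^-))\int_0^1 f^2\,dx$ from \eqref{eqn: a<tau}--\eqref{eqn: a>tau}, plus (when $F(D_{z,e}^-)>1/2$) an additional $\int_0^1 f^2(x)\log_2\tfrac{1}{1-x}\,dx$ from \eqref{eqn: a>1/2}. Using $f\le F^{-1}$, the first integral is $\le\mathbb{E}t_e^2$; for the second, Proposition~\ref{prop: variation} applied on $[0,1]$ (under uniform $U\sim U(0,1)$) to $X=F^{-1}(U)^2$ and $Y=-\tfrac12\log(1-U)$, using the identity $\mathbb{E}(1-U)^{-1/2}=2$, gives
\[
\int_0^1 F^{-1}(x)^2\log_2\tfrac{1}{1-x}\,dx \;\le\; 2\,Ent_\mu(t_e^2)+2(\log 2)\,\mathbb{E}t_e^2,
\]
finite under~\eqref{eq: two_moments}. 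Absorbing these $\mu$-dependent constants and applying identity~\eqref{eqn: intident} puts the bound in the form
\[
\sum_{e,j}\mathbb{E}_\pi(\Delta_{e,j}G)^2 \;\le\; \frac{C\lambda^2}{\#B_m}\sum_{z,e}\mathbb{E}\Bigl[e^{2\lambda F_{m,e^c}}(1-\log F(t_e))\mathbf{1}_{\{t_e<D_{z,e}\}}\Bigr].
\]

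It remains to trade $e^{2\lambda F_{m,e^c}}$ for $e^{2\lambda F_m}/C_\lambda'$. Lemma~\ref{lem: DHS_lem} gives $F_m\le F_{m,e^c}+t_e$, which combined with $\lambda\le 0$ yields the pointwise inequality $e^{2\lambda F_{m,e^c}}\,e^{2\lambda t_e}\le e^{2\lambda F_m}$; integrating out $t_e$ against $\mu$ and dividing by $C_\lambda'=\mathbb{E}_\mu e^{2\lambda t_e}$ gives
\[
e^{2\lambda F_{m,e^c}} \;=\; \frac{1}{C_\lambda'}\int e^{2\lambda F_{m,e^c}}e^{2\lambda t}\mu(dt) \;\le\; \frac{1}{C_\lambda'}\,\mathbb{E}\bigl[e^{2\lambda F_m}\,\big|\,\omega_{e^c}\bigr].
\]
Substituting this, and invoking the Harris--FKG (Chebyshev association) inequality conditionally on $\omega_{e^c}$ applied to the three factors $e^{2\lambda F_m}$, $(1-\log F(t_e))$, and $\mathbf{1}_{\{t_e<D_{z,e}\}}$---all non-increasing in $t_e$---the product of conditional expectations is at most the conditional expectation of the product. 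This recombines into $\frac{1}{C_\lambda'}\mathbb{E}[e^{2\lambda F_m}(1-\log F(t_e))\mathbf{1}_{e\in Geo(z,z+x)}]$, which after summation over $e$ gives the desired bound.

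The principal obstacle is the $F(D_{z,e}^-)>1/2$ subcase in Lemma~\ref{lma: uniformvar}. In Theorem~\ref{prop: intermediate_1} this was absorbed into the entropy bound on $L_z(1+N)$; here, without that machinery, the apparent log-singularity in $\int_0^1 f^2(x)\log_2\tfrac{1}{1-x}\,dx$ must be controlled directly from~\eqref{eq: two_moments}. Working with the variational formula on the uniform variable $U$ rather than on $t_e$ is essential---discontinuities of $\mu$ make $\mathbb{E}_\mu(1-F(t_e))^{-1/2}$ potentially infinite, whereas $\mathbb{E}(1-U)^{-1/2}=2$ unconditionally. The resulting $\mu$-dependent constant is absorbed into the constant of the statement.
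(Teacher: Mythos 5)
Your proof is correct and follows essentially the same architecture as the paper's, but it substitutes a self-contained re-derivation for a citation. The paper's proof of Theorem~\ref{prop: intermediate}, after the same mean-value/monotonicity step that pulls $H^2$ out at its extreme value and the same exponential-tilt trick $H^2(\omega_{e^c},\vec 0) \le \mathbb{E}_{\pi_e}H^2 / \mathbb{E}_\mu e^{2\lambda t_e}$, simply quotes the bound from \cite[(6.23)]{DHS},
\[
\sum_{j=1}^\infty \mathbb{E}_{\pi_e}(\Delta_{e,j}T_z)^2 \le \oldconstant{C_23}\,F(D_{z,e}^-)(1-\log F(D_{z,e}^-))\,\mathbf{1}_{\{I<D_{z,e}\}},
\]
and then finishes with identity~\eqref{eqn: intident} and Chebyshev's association inequality exactly as you do. You instead re-derive that inner bound from Lemma~\ref{lma: uniformvar} with $h\equiv 1$ and $f(x)=T_z(\omega_{e^c},x)-T_z(\omega_{e^c},0)\le F^{-1}(x)$, which forces you to control the residual log-singularity $\int_0^1 F^{-1}(x)^2\log_2\frac{1}{1-x}\,dx$ from case~\eqref{eqn: a>1/2}; your idea of applying Proposition~\ref{prop: variation} to the uniform coordinate $U$ (rather than to $t_e$, which could fail for discontinuous $\mu$) is the right fix and gives a finite $\mu$-dependent constant under \eqref{eq: two_moments}, since $\mathrm{Ent}_\mu(t_e^2)<\infty$ there. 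Apart from small discrepancies — the paper evaluates $H^2$ at $\vec 0$, i.e.\ at $t_e=I$, while you use $t_e=0$, and your displayed bound on the log-weighted integral is off by a harmless $1/\log 2$ factor — the argument is sound. The net effect is the same inequality with $C_\lambda'=\mathbb{E}_\mu e^{2\lambda t_e}$ in the denominator; the extra work buys you self-containment at the cost of reproving a result the paper was entitled to cite.
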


\begin{proof}

Write $\mathbb{E}_{e^c}$ for expectation relative to $\prod_{f \neq e} \pi_f$ and for any $i\geq 1$, let $\pi_{e,\geq i}$ be the measure $\prod_{k \geq i} \pi_{e,k}$. Further, for $j \geq 1$ write 
\[
\omega_B = (\omega_{e^c}, \omega_{e,< j}, \omega_{e,j}, \omega_{e,> j})\ ,
\]
where $\omega_{e^c}$ is the configuration $\omega_B$ projected on the coordinates $(\omega_{f,k} : f \neq e,~ k \geq 1)$, $\omega_{e,<j}$ is $\omega_B$ projected on the coordinates $(\omega_{e,k} : k < j)$ and $\omega_{e, > j}$ is $\omega_B$ projected on the coordinates $(\omega_{e,k} : k > j)$.

Then
\begin{align}
\mathbb{E}_\pi \left( \Delta_{e,j} G \right)^2=&~\mathbb{E}_{e^c} \mathbb{E}_{\pi_{e,1}} \cdots \mathbb{E}_{\pi_{e,j-1}} \left[ \mathbb{E}_{\pi_{e,\geq j}} \left( \Delta_{e,j} G \right)^2 \right] \nonumber \\
=& ~\mathbb{E}_{e^c} \left[ \frac{1}{2^{j-1}} \sum_{\sigma \in \{0,1\}^{j-1}} \left[ \mathbb{E}_{\pi_{e,\geq j}} \left( \Delta_{e,j} G (\omega_{e^c}, \sigma, \omega_{e,j}, \omega_{e, > j}) \right)^2 \right] \right] \label{eq: seconD_{z,e}q}\ ,
\end{align}
and the innermost term is
\begin{equation}\label{eq: sum_on_home}
\mathbb{E}_{\pi_{e, \geq j}} \left( G(\omega_{e^c},\sigma, 1, \omega_{e,>j}) - G(\omega_{e^c}, \sigma,0, \omega_{e,>j}) \right)^2 \ .
\end{equation}
Applying the mean value theorem, we get an upper bound of
\[
H^2(\omega_{e^c}, \vec 0) ~\mathbb{E}_{\pi_{e,\geq j}}  (F_m(\omega_{e^c},\sigma,1,\omega_{e,>j})-F_m(\omega_{e^c},\sigma,0,\omega_{e,>j}))^2\ ,
\]
where $\vec 0$ is the infinite sequence $(0, 0, \ldots)$. Convexity of $x \mapsto x^2$ gives the bound
\[
\frac{1}{\#B_m} \sum_{z \in B_m} H^2(\omega_{e^c},\vec 0)~\mathbb{E}_{\pi_{e,\geq j}}(\Delta_{e,j} T_z(\omega_{e^c},\sigma, \omega_{e,j},\omega_{e,>j}) )^2\ .
\]
Therefore
\begin{align*}
\sum_{j=1}^\infty \mathbb{E}_\pi(\Delta_{e,j}G)^2 &\leq \frac{1}{\# B_m} \sum_{z \in B_m} \mathbb{E}_{e^c} \left[ H^2(\omega_{e^c}, \vec 0) \sum_{j=1}^\infty \frac{1}{2^{j-1}} \left[ \sum_{\sigma \in \{0,1\}^{j-1}} \mathbb{E}_{\pi_{e,\geq j}} (\Delta_{e,j} T_z(\omega_{e^c},\sigma,\omega_{e,j},\omega_{e,>j}))^2 \right] \right] \\
& = \frac{1}{\# B_m} \sum_{z \in B_m} \mathbb{E}_{e^c} \left[ H^2(\omega_{e^c},\vec 0) \sum_{j=1}^\infty \mathbb{E}_{\pi_e}(\Delta_{e,j} T_z )^2 \right]\ .
\end{align*}

We have now isolated the term from \cite[(6.23)]{DHS}; there it is proved under \eqref{eq: two_moments} that
\[
\sum_{j=1}^\infty \mathbb{E}_{\pi_e} (\Delta_{e,j}T_z)^2 \leq \newconstant\label{C_23} F(D_{z,e}^-)(1-\log F(D_{z,e}^-)) \mathbf{1}_{\{I < D_{z,e}\}}\ .
\]
Thus we obtain
\begin{equation}\label{eq: almost_end_11}
\sum_{j=1}^\infty \mathbb{E}_\pi (\Delta_{e,j}G)^2 \leq \frac{\oldconstant{C_23}}{\#B_m} \sum_{z \in B_m} \mathbb{E}_{e^c} \left[ H^2 (\omega_{e^c},\vec 0)~F(D_{z,e}^-)(1-\log F(D_{z,e}^-))\mathbf{1}_{\{I<D_{z,e}\}} \right]\ .
\end{equation}
Use the bound
\[
H^2 \geq \lambda^2 e^{2\lambda t_e} e^{2\lambda F_m(t_{e^c},I)}\ ,
\]
which implies $H^2(\omega_{e^c},\vec 0) \leq \frac{\mathbb{E}_{\pi_e}H^2}{\mathbb{E}_\mu e^{2\lambda t_e}}$. Using the identity \eqref{eqn: intident}, this gives an upper bound for the right side of \eqref{eq: almost_end_11} when $\lambda \leq 0$:
\[
\frac{\oldconstant{C_23}}{\mathbb{E}_\mu e^{2\lambda t_e} \# B_m} \sum_{z \in B_m} \mathbb{E}_{e^c}  \left[ \left[ \mathbb{E}_{\pi_e} H^2 \right] \left[ \int (1-\log F(t_e)) \mathbf{1}_{[I,D_{z,e})}(t_e) ~\text{d}\mu(t_e)\right] \right]\ .
\]
Since $\lambda \leq 0$, $H^2 = \lambda^2 e^{2\lambda F_m}$ is decreasing in the variable $t_e$. However $(1-\log F(t_e)) \mathbf{1}_{[I,D_{z,e})}(t_e)$ is also decreasing in $t_e$. Therefore the Chebyshev association inequality gives an upper bound of
\[
\frac{\oldconstant{C_23}}{\mathbb{E}_\mu e^{2\lambda t_e} \# B_m} \sum_{z \in B_m} \mathbb{E} H^2 (1-\log F(t_e)) \mathbf{1}_{\{e \in Geo(z,z+x)\}}\ .
\]
Summing over edges $e$,
\[
\sum_{e,j} \mathbb{E}_\pi(\Delta_{e,j}G)^2 \leq \lambda^2 \frac{\oldconstant{C_23}}{\mathbb{E}e^{2\lambda t_e} \# B_m} \sum_{z \in B_m} \mathbb{E}\left[ e^{2\lambda F_m} \sum_{e \in Geo(z,z+x)} (1-\log F(t_e)) \right]\ .
\]

\end{proof}

\section{Control by lattice animals}\label{sec: animals}

The next step is to use the theory of greedy lattice animals to decouple and control the terms in the expectation of Theorem~\ref{thm: entropy_bound}. Specifically we will show
\begin{thm}\label{thm: next_step}
Assume \eqref{eq: exponential_moments} with $\lambda \in [0,\alpha/2)$ or \eqref{eq: two_moments} with $\lambda \leq 0$. For some $\newconstant\label{D_6}>0$,
\[
\sum_{k=1}^\infty Ent(V_k^2) \leq \lambda^2 \oldconstant{D_6}C_{\lambda} \left[ Ent(e^{2\lambda F_m}) + (1+ \mathbb{E}F_m) \mathbb{E}e^{2\lambda F_m} \right] \text{ for all } x \in \mathbb{Z}^d\ ,
\]
where $C_\lambda$ is from Theorem~\ref{thm: entropy_bound}.
\end{thm}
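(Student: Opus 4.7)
\medskip
\noindent
\textbf{Proof plan.} Starting from Theorem~\ref{thm: entropy_bound}, and absorbing the factor $\lambda^2 C_\lambda$ appearing there, it suffices to show that, uniformly in $z \in B_m$,
\[
\mathbb{E}[X g_z] \le C\bigl[Ent(X) + (1+\mathbb{E}F_m)\mathbb{E}X\bigr],
\]
where $X := e^{2\lambda F_m}$ and $g_z := \sum_{e \in Geo(z,z+x)}(1 - \log F(t_e))$; averaging this bound over $z\in B_m$ then yields the theorem. The principal tool is the variational characterization of entropy (Proposition~\ref{prop: variation}): for any $\theta>0$ with $\mathbb{E}e^{\theta g_z}<\infty$,
\[
\theta\,\mathbb{E}[X g_z] \le Ent(X) + \mathbb{E}X \cdot \log \mathbb{E}e^{\theta g_z}.
\]
Thus the task reduces to bounding $\log \mathbb{E}e^{\theta g_z}$ by $O(1+\|x\|_1)$ for some small $\theta>0$, which combined with $\|x\|_1 \le C(1+\mathbb{E}F_m)$ (from \eqref{eq: lower_comparability} and the deterministic-path upper bound on $\mathbb{E}T(0,x)$) closes the proof.

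The key preliminary input, invoked as Proposition~\ref{prop:expdist}, is that $\xi_e := 1 - \log F(t_e)$ has a universal exponential moment bound $\mathbb{E}e^{\theta \xi_e} \le e^\theta/(1-\theta)$ for $\theta\in(0,1)$, independently of the law $\mu$: indeed $F(t_e)$ is stochastically larger than a uniform random variable on $[0,1]$. Combined with a greedy lattice-animal union bound (the number of animals $A$ of size $n$ containing $z$ is at most $C_d^n$) and a Chernoff estimate, this produces $\mathbb{E}e^{\theta M_n(z)} \le e^{\rho(\theta) n}$, where $M_n(z) := \max\{\sum_{e\in A}\xi_e : z\in A,\ |A|\le n\}$ and $\rho(\theta)\to 0^+$ as $\theta\to 0^+$. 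Since $Geo(z,z+x)$ is a lattice animal containing $z$ of size at most $G(z,z+x)$, one has $g_z \le M_{G(z,z+x)}(z)$, reducing everything to controlling the length $G(z,z+x)$.

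In Case 1 (under \eqref{eq: exponential_moments}, $\lambda\in[0,\alpha/2)$), Corollary~\ref{cor: exponential_length_cor}(ii) provides $\log\mathbb{E}e^{\alpha_1 G(z,z+x)}\le C\|x\|_1$. A Cauchy--Schwarz argument, splitting over the value of $G(z,z+x)=n$ to decouple $e^{\theta M_n(z)}$ from $\mathbf{1}_{G(z,z+x)=n}$, then yields $\log\mathbb{E}e^{\theta g_z}\le C(1+\|x\|_1)$ provided $\theta$ is chosen small enough that $\rho(2\theta)/2 < \alpha_1/2$. Variational entropy then completes this case in a single line.

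The main obstacle is Case 2 (under \eqref{eq: two_moments}, $\lambda\le 0$), where $G(z,z+x)$ lacks exponential moments and the previous Cauchy--Schwarz approach fails. Following an idea from \cite{DK}, I decompose $G(z,z+x) \le a^{-1}T_z + Y_z$ via Proposition~\ref{prop: kubota}, with $Y_z$ having exponential tail. Combined with the linear lattice-animal bound $M_n(z) \le Bn + \tilde R_n$ for $B$ large (so that $R_z := \sup_n \tilde R_n$ has exponential tail by direct summation of the Chernoff estimate), this yields
\[
g_z \le Ba^{-1}T_z + B Y_z + R_z.
\]
The bulk term is handled by Harris--FKG (Chebyshev's association): since $\lambda\le 0$, $X$ is coordinatewise decreasing in $(t_e)$ while $T_z$ is coordinatewise increasing, so $\mathbb{E}[X T_z]\le \mathbb{E}X\cdot \mathbb{E}T_z \le C\|x\|_1\,\mathbb{E}X$. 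The remainder terms $\mathbb{E}[X Y_z]$ and $\mathbb{E}[X R_z]$ both admit variational-entropy bounds of the form $C(Ent(X) + \mathbb{E}X)$, because $Y_z$ and $R_z$ have exponential moments bounded uniformly in $x$. Summing the three contributions gives the desired inequality. The crux of Case 2 is precisely this splitting: $T_z$ has only $L^2$ moments and cannot be accessed by variational entropy, so it must be isolated and treated via monotonicity, while the non-monotone remainders $Y_z, R_z$ must carry exponential tails to fit into the entropy framework.
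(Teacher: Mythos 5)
Your proposal is correct and takes the same high-level route as the paper (greedy lattice animals plus the variational characterization of entropy, with a Harris--FKG split of the $T_z$-linear part when $\lambda\le 0$), but the specific decompositions you use differ from the paper's in both cases and are worth contrasting. For $\lambda\ge 0$, you control $\log\mathbb{E}e^{\theta g_z}$ by Cauchy--Schwarz over the level sets of $G(z,z+x)$, decoupling $e^{\theta M_n(z)}$ from $\mathbf{1}_{\{G(z,z+x)=n\}}$; the paper instead uses a tail union bound $\mathbb{P}(Y_x\ge\iota_1\|x\|_1)\le\mathbb{P}(G(0,x)\ge\iota_2\|x\|_1)+\mathbb{P}(N_{\iota_2\|x\|_1}\ge\iota_1\|x\|_1)$ and integrates to get the exponential moment. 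Both deliver $\log\mathbb{E}e^{\theta g_z}\le C\|x\|_1$. For $\lambda\le 0$, you additively chain $g_z\le B\,G(z,z+x)+R_z$ (with an exponentially tailed lattice-animal residual $R_z=\sup_n(N_n-Bn)_+$) and then $G(z,z+x)\le a^{-1}T_z+Y_z$ via Proposition~\ref{prop: kubota}, producing three pieces treated by FKG and variational entropy. The paper instead sets $Z_{z,x}=Y_{z,x}\mathbf{1}_{\{Y_{z,x}>C T_z\}}$ so that $Y_{z,x}\le C T_z+Z_{z,x}$, and proves $Z_{z,x}$ has exponential tails directly by bounding the probability of the event $A_n'$ (a self-avoiding path from $z$ to $z+x$ with $N(\gamma)\ge n$ but $T(\gamma)<cN(\gamma)$) in Lemma~\ref{lem: hanson_mansion}. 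Your version is more modular, reusing the Kubota variable $Y_z$ and Proposition~\ref{prop:latticeanbound} off the shelf, at the cost of tracking two residuals rather than one; the paper's indicator truncation is more compact and closer in form to~\cite{DK}. Both are sound, and in particular your remark that $Geo(z,z+x)\subseteq\gamma$ for a geodesic $\gamma$ (so $g_z\le M_{G(z,z+x)}(z)$) is the correct way to justify the lattice-animal reduction, since $Geo$ itself need not be connected.
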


The theorem follows from inequalities \eqref{eq: positive} and \eqref{eq: negative}, which we now set out to prove. We begin by generating a new set of ``lattice animal weights'' from a given realization $(t_e)$; set
\[
w_e:= 1 - \log( F(t_e)) \quad \text{ for all } e \in \mathcal{E}^d\ . 
\]

\begin{prop}
\label{prop:expdist}
The collection $(w_e)$ is i.i.d. with
\[ 
\mathbb{E}\left(e^{w_e/2} \right)  < \infty\ .
\]
\end{prop}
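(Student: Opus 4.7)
The plan has two parts. First, the fact that $(w_e)$ is i.i.d.\ is immediate: by definition $w_e = 1 - \log F(t_e)$ depends only on the single variable $t_e$, and the $(t_e)$ are i.i.d.\ by assumption. So I would just record this observation and move on to the moment estimate, which is where the content lies.

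For the moment estimate, I would rewrite
\[
e^{w_e/2} = e^{1/2}\, F(t_e)^{-1/2},
\]
so that the claim reduces to showing $\mathbb{E}\bigl[F(t_e)^{-1/2}\bigr] < \infty$. The natural approach is the quantile coupling already introduced in Section~\ref{sec: benc}: take $U$ uniform on $[0,1]$ and set $t_e = F^{-1}(U)$, which has law $\mu$ by \eqref{eqn: uniform}. A standard property of the right-continuous inverse (equivalent to the identity $\{F^{-1}(U) \le x\} = \{U \le F(x)\}$ used earlier) is that $F(F^{-1}(U)) \geq U$ almost surely. Consequently,
\[
F(t_e)^{-1/2} \leq U^{-1/2},
\]
and
\[
\mathbb{E}\bigl[U^{-1/2}\bigr] = \int_0^1 u^{-1/2}\,\mathrm{d}u = 2.
\]
Putting the pieces together yields $\mathbb{E}[e^{w_e/2}] \leq 2e^{1/2} < \infty$.

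There is really no main obstacle: the whole point of the proposition is that the bound $\mathbb{E}[e^{w_e/2}] < \infty$ is a distribution-free statement, so no moment assumption on $t_e$ is needed. The only thing one has to be slightly careful about is that $F$ may have atoms and flat pieces, which is exactly why one should work with the quantile coupling rather than treat $F(t_e)$ as uniform on $[0,1]$ (which is only true in the continuous case). Once the inequality $F(F^{-1}(U)) \geq U$ is invoked, the rest is a one-line computation.
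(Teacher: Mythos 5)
Your proof is correct and uses the same key mechanism as the paper: the quantile coupling $t_e = F^{-1}(U)$ together with the inequality $F(F^{-1}(U)) \geq U$. The paper derives a tail bound $\mathbb{P}(w_e \geq r) \leq e^{1-r}$ and concludes $\mathbb{E}e^{\lambda w_e} < \infty$ for all $\lambda < 1$, whereas you compute the expectation directly via $\mathbb{E}[U^{-1/2}] = 2$; these are cosmetically different presentations of the same argument.
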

\begin{proof}
If $u \in (0,1)$ we define $F^{-1}(u) = \inf\{ x : F(x) \geq u\}$, so that $F^{-1}(u) \leq x$ if and only if $u \leq F(x)$. In particular, $u \leq F(F^{-1}(u))$ for all $u$. If $U$ is uniformly distributed on $(0,1)$ then $F^{-1}(U)$ is distributed like $t_e$, so if $r \geq 1$,
\[
\mathbb{P}(w_e \geq r) = \mathbb{P}(F(F^{-1}(U)) \leq e^{1-r}) \leq \mathbb{P}(U \leq e^{1-r}) = e^{1-r}\ .
\]
This implies $\mathbb{E}e^{\lambda w_e}<\infty$ for all $\lambda < 1$.
\end{proof}

For a realization of $(t_e),$ consider the edge greedy lattice animal problem. For a connected subset of edges $\gamma \subseteq \mathcal{E}^d,$ define $N(\gamma) = \sum_{e \in \gamma} w_e$, and define the random variable
\[
N_n := \max_{\substack{\gamma: \# \gamma = n\\0 \in \gamma}} N(\gamma)
\]
(here the notation $0 \in \gamma$ means that $0$ is an endpoint of some edge in $\gamma$).

\begin{prop}
\label{prop:latticeanbound}
For each $\kappa > 0$, there exists $\beta > 0$ such that
\begin{equation*}
\sup_{n > 0} \frac{\log \mathbb{E} e^{\beta N_n}}{n} \leq \kappa\ .
\end{equation*}

\end{prop}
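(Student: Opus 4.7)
The plan is a union bound over animals combined with a Chernoff estimate on the i.i.d.\ sum $N(\gamma)=\sum_{e\in\gamma}w_e$, followed by a direct integration of the tail. From Proposition~\ref{prop:expdist} set $\psi_0 := \log \mathbb{E}e^{w_e/2}$, which is finite; and invoke the standard combinatorial fact that the number of connected edge-subgraphs of size $n$ incident to $\mathbf{0}$ in $\mathbb{Z}^d$ is at most $\Lambda^n$ for some constant $\Lambda = \Lambda(d) < \infty$ (this is essentially Kesten's bound on lattice animals).

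For any fixed animal $\gamma$ with $\#\gamma = n$, the Chernoff inequality at parameter $1/2$ gives $\mathbb{P}(N(\gamma) > s) \leq e^{-s/2}(\mathbb{E}e^{w_e/2})^n = e^{n\psi_0 - s/2}$. A union bound over the at most $\Lambda^n$ animals then yields the key tail estimate
\[
\mathbb{P}(N_n > s) \leq e^{nC - s/2}, \qquad C := \log\Lambda + \psi_0.
\]
For $\beta \in (0,1/2)$ I would then apply
\[
\mathbb{E}e^{\beta N_n} = 1+\beta\int_0^\infty e^{\beta s}\mathbb{P}(N_n > s)\,ds,
\]
split at $s_0 = 2Cn$, bound $\mathbb{P}(N_n > s) \leq 1$ on $[0,s_0]$, and use the displayed tail bound on $(s_0,\infty)$; since $\beta < 1/2$ the integral on the tail converges. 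An elementary calculation gives
\[
\mathbb{E}e^{\beta N_n} \leq \frac{1}{1-2\beta}\, e^{2\beta C n},
\]
and dividing by $n \geq 1$,
\[
\frac{\log\mathbb{E}e^{\beta N_n}}{n} \leq 2\beta C - \log(1-2\beta).
\]
This upper bound is independent of $n$ and tends to $0$ as $\beta \to 0^+$, so given any $\kappa>0$ one chooses $\beta > 0$ small enough that the right-hand side is at most $\kappa$.

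There is no real obstacle here: the proof is soft, relying only on the exponential tail of $w_e$ supplied by Proposition~\ref{prop:expdist} together with the standard exponential growth bound on the number of lattice animals. The only care needed is in the choice of the Chernoff parameter---one must take it strictly below the threshold at which $\mathbb{E}e^{\beta w_e}$ diverges and leave room for $\beta$ below it---both requirements easily met by the value $1/2$. It is precisely the boundedness of the tail rate ($1/2$) \emph{independently of $n$} that lets us trade the combinatorial factor $\Lambda^n$ against the exponential decay, producing a bound that is linear in $n\beta$.
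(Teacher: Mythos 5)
Your proof is correct and follows essentially the same approach as the paper: a union bound over the at most $\Lambda^n$ lattice animals of size $n$ through the origin, a Chernoff estimate built on the exponential moment $\mathbb{E}e^{w_e/2}<\infty$ from Proposition~\ref{prop:expdist}, and integration of the resulting tail $\mathbb{P}(N_n>s)\le e^{nC-s/2}$. The only difference is in the finishing step: you compute $\mathbb{E}e^{\beta N_n}\le (1-2\beta)^{-1}e^{2\beta Cn}$ directly and let $\beta\to 0^+$, whereas the paper first establishes a crude bound $\mathbb{E}e^{\lambda N_n}\le e^{\beta_1 n}$ at a fixed $\lambda<1/8$ and then shrinks the rate by iterating $\mathbb{E}e^{\lambda N_n/2}\le(\mathbb{E}e^{\lambda N_n})^{1/2}$; your direct computation is slightly cleaner but amounts to the same thing.
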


\begin{proof}
Recall
\[
\{N_n > \beta n\} = \bigcup_{\gamma: \, \# \gamma = n} \{N(\gamma) > \beta n\}\ ,
\]
where the union is over all lattice animals of  size $n$ containing the origin. Now, there exists a constant $\newconstant\label{D_7}$ such that the number of such lattice animals is bounded by $e^{\oldconstant{D_7} n}$. Therefore, letting $(w_i)$ be a sequence of i.i.d. random variables distributed as $w_e$,
\begin{align*}
\mathbb{P}\left(N_n > \beta n\right) \leq e^{\oldconstant{D_7} n}\, \mathbb{P}\left(\sum_{i=1}^n w_i > \beta n\right) &\leq  e^{\oldconstant{D_7} n - \beta n /2} \mathbb{E}e^{\sum_{i=1}^n w_i/2}\\
&=  e^{\oldconstant{D_7} n - \beta n /2} \left[\mathbb{E}e^{w_e/2}\right]^n\ .
\end{align*}
In particular, for all $\beta$ greater than some $\beta_0,$
\[
\mathbb{P}\left(N_n > \beta n\right) \leq e^{-\beta n/4}\ . 
\]
Now, for all $\lambda \in [0,1/8)$ and for each $n\geq 1$,
\begin{align*}
\mathbb{E}e^{\lambda N_n} = \lambda \int_0^\infty e^{\lambda x} \mathbb{P}\left(N_n \geq x\right) \, \mathrm{d}x &\leq e^{\beta_0 n} + \lambda \int_{\beta_0 n/\lambda}^{\infty} e^{\lambda x}  \mathbb{P}\left(N_n \geq x\right) \, \mathrm{d}x\\
&\leq e^{\beta_0 n} + n \int_{\beta_0}^\infty e^{-\Xi n}\mathrm{d} \Xi\\
&\leq e^{\beta_1 n}
\end{align*}
for some $\beta_1 < \infty.$
Now, since $\mathbb{E}e^{\lambda N_n / 2} \leq \left(\mathbb{E}e^{\lambda N_n}\right)^{1/2},$ the proof is complete.
\end{proof}

We now consider the first-passage model on $\mathbb{Z}^d.$
For any $x \in \mathbb{Z}^d$, $x \neq 0,$ let
\[
Y_x:= \sum_{e \in Geo(0,x)} w_e\ ;
\]
when we need to allow the starting point to vary as well, write
\[
Y_{z,x}:= \sum_{e \in Geo(z,z+x)} w_e\ .
\]

\subsection{The case $\lambda \geq 0$}
In this section, we consider the case of upper exponential concentration. For the remainder of this section, assume \eqref{eq: exponential_moments} and let $\lambda \in [0,\alpha/2)$.

Rephrase the bound from Theorem~\ref{thm: entropy_bound}:
\begin{equation}
\label{eq:uppertail_want_to_bd}
\sum_{k=1}^\infty Ent(V_k^2) \leq \lambda^2 \frac{\oldconstant{D_3} C_\lambda}{\#B_m} \sum_{z\in B_m} ~\mathbb{E} \left[ e^{2\lambda F_m} Y_{z,x} \right]\ .
\end{equation}
Applying Proposition~\ref{prop: variation} to the expectation on the right-hand side of \eqref{eq:uppertail_want_to_bd} and the fact that $Y_x = Y_{z,x}$ in distribution yields a bound for some $\newconstant\label{D_8}>0$ such that the expectation below exists:
\begin{equation}\label{eq:uppertail_after_holder}
\mathbb{E}\left[ e^{2\lambda F_m} Y_{z,x} \right] \leq \oldconstant{D_8}^{-1} Ent(e^{2 \lambda F_m}) + \oldconstant{D_8}^{-1}\mathbb{E}\left[ e^{2 \lambda F_m} \right]\, \log \mathbb{E} e^{\oldconstant{D_8} Y_x}\ .
\end{equation}
We focus our efforts on a bound for the second term of \eqref{eq:uppertail_after_holder}.

\begin{prop}
Assuming \eqref{eq: exponential_moments}, there exists $\oldconstant{D_8}>0$ such that
\[ 
\sup_{x \neq 0} \frac{\log \mathbb{E}e^{\oldconstant{D_8} Y_x}}{\|x\|_1} <\infty\ .
\]
\end{prop}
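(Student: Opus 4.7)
The plan is to bound $Y_x$ pointwise by the greedy lattice animal weight $N_{G(0,x)}$, then split on the value of $G(0,x)$ using Cauchy–Schwarz and the two exponential bounds already available: Proposition~\ref{prop:latticeanbound} for $N_n$ and Corollary~\ref{cor: exponential_length_cor}(2) for $G(0,x)$.

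For the pointwise bound, fix any geodesic $\gamma$ from $0$ to $x$ (say the lexicographically minimal one); since $Geo(0,x)$ is the intersection over all geodesics, $Geo(0,x)\subseteq \gamma$. The weights $w_e = 1-\log F(t_e) \geq 1$ are positive, so
\[
Y_x \;=\; \sum_{e\in Geo(0,x)} w_e \;\leq\; \sum_{e\in\gamma} w_e\ .
\]
Now $\gamma$ is a connected edge set containing $0$ with $\#\gamma \leq G(0,x)$, and $N_n$ is monotone non-decreasing in $n$ (again since $w_e\geq 1 >0$, any animal of size $n$ can be extended to one of size $n+1$ by adding an adjacent edge). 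Hence $Y_x \leq N_{G(0,x)}$.

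Then I would decompose on the value of $G(0,x)$ and apply Cauchy–Schwarz:
\[
\mathbb{E}e^{\oldconstant{D_8} Y_x} \;\leq\; \sum_{n\geq 1} \mathbb{E}\!\left[e^{\oldconstant{D_8} N_n}\, \mathbf{1}_{\{G(0,x)=n\}}\right] \;\leq\; \sum_{n\geq 1} \bigl(\mathbb{E}e^{2\oldconstant{D_8} N_n}\bigr)^{1/2} \bigl(\mathbb{P}(G(0,x)\geq n)\bigr)^{1/2}\ .
\]
Let $\alpha_1$ and the constant $K$ be such that $\mathbb{E}e^{\alpha_1 G(0,x)}\leq e^{K\|x\|_1}$ by \eqref{eq: exponential_length_bound}; Markov gives $\mathbb{P}(G(0,x)\geq n)\leq e^{K\|x\|_1 - \alpha_1 n}$. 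By Proposition~\ref{prop:latticeanbound}, I may choose $\oldconstant{D_8}$ small enough that $\mathbb{E}e^{2\oldconstant{D_8} N_n}\leq e^{\kappa n}$ for some $\kappa < \alpha_1$. Substituting these two bounds,
\[
\mathbb{E}e^{\oldconstant{D_8} Y_x} \;\leq\; e^{K\|x\|_1/2}\sum_{n\geq 1} e^{-(\alpha_1-\kappa)n/2}\ ,
\]
and the geometric series converges, giving $\log \mathbb{E}e^{\oldconstant{D_8}Y_x}\leq (K/2)\|x\|_1 + O(1)$, uniformly in $x\neq 0$.

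The only mild obstacle is compatibility of constants: one must pick $\oldconstant{D_8}$ small enough that both the lattice animal bound (which forces $\oldconstant{D_8}$ to lie in a small interval so that $\kappa$ can be made less than $\alpha_1$) and the length-moment bound from \eqref{eq: exponential_length_bound} apply simultaneously. Since $\kappa$ in Proposition~\ref{prop:latticeanbound} can be made arbitrarily small by shrinking $\beta$, this is easily arranged, and the argument then goes through unchanged. No new ideas beyond the two quoted exponential-moment estimates are required.
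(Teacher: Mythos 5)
Your proposal is correct and rests on exactly the same two ingredients as the paper: the greedy lattice animal estimate (Proposition~\ref{prop:latticeanbound}) and the exponential moment bound on $G(0,x)$ from \eqref{eq: exponential_length_bound}, combined via the pointwise domination $Y_x \leq N_{G(0,x)}$. The paper's (terse) proof packages the same comparison as a tail-probability union bound on $\{Y_x \geq \iota_1\|x\|_1\}$ and then integrates, whereas you decompose over the level sets $\{G(0,x)=n\}$ and apply Cauchy--Schwarz; both are valid and essentially equivalent, and your version is more explicit about how the constant $\oldconstant{D_8}$ is chosen so that $\kappa < \alpha_1$ makes the geometric series converge.
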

\begin{proof}
Recall that $G(0,x)$ is the maximal number of edges in a geodesic from $0$ to $x$. Then for constants $\iota_1,\iota_2>0$,
\begin{equation}\label{eq: no_details}
\mathbb{P}(Y_x \geq \iota_1 \|x\|_1) \leq \mathbb{P}(G(0,x) \geq \iota_2 \|x\|_1) + \mathbb{P}(N_{\iota_2\|x\|_1} \geq \iota_1\|x\|_1)\ .
\end{equation}
Therefore
\[
\mathbb{E}e^{\oldconstant{D_8}Y_x} = 1+\int_0^\infty \oldconstant{D_8} e^{\oldconstant{D_8} y} \mathbb{P}(Y_x \geq y)~\text{d}y
\]
can be bounded using \eqref{eq: no_details}, Proposition~\ref{prop:latticeanbound} and \eqref{eq: exponential_length_bound}.
\end{proof}

So under \eqref{eq: exponential_moments} with $\lambda \in [0,\alpha/2)$, we return to \eqref{eq:uppertail_after_holder} and find for some $\newconstant\label{D_{11}}>0$,
\begin{align}
\sum_{k=1}^\infty Ent(V_k^2) &\leq \lambda^2 \frac{\oldconstant{D_{11}} C_\lambda}{\#B_m} \sum_{z \in B_m} \left[ Ent(e^{2\lambda F_m}) + \mathbb{E}e^{2\lambda F_m} \right] \nonumber\\
&= \lambda^2 \oldconstant{D_{11}}C_\lambda \left[ Ent(e^{2\lambda F_m}) + \mathbb{E}e^{2\lambda F_m}\right]  \text{ for all } x \in \mathbb{Z}^d\ . \label{eq: positive}
\end{align}

\subsection{The case $\lambda \leq 0$}
When $\lambda \leq 0,$ the problem is again to bound above the term
\begin{equation}
\label{eq:expectbrack}
\frac{1}{\# B_m}\sum_{z \in B_m} \mathbb{E} \left[ e^{2\lambda F_m} Y_{z,x} \right]\ .
\end{equation}
We will break this up differently from before, now using a variant of the idea from \cite{DK}. Let $\newconstant\label{D_{12}}>0$ be arbitrary (to be fixed later, independent of $x$). Then
\begin{align}
\mathbb{E}e^{2\lambda F_m} Y_{z,x} &\leq \oldconstant{D_{12}} \mathbb{E} \left[ e^{2\lambda F_m} T_z \right]+ \mathbb{E} \left[ e^{2\lambda F_m} \, Y_{z,x}\mathbf{1}_{\{Y_{z,x}> \oldconstant{D_{12}} T_z\}} \right]\nonumber\\
\label{eq:introducingZ}
 &\leq  \oldconstant{D_{12}} \mathbb{E} \left[ e^{2\lambda F_m} \right]\mathbb{E} T_z+ \mathbb{E} \left[ e^{2\lambda F_m} Z_{z,x} \right],
\end{align}
where we have used the Harris-FKG inequality on the first term (since $\lambda \leq 0$, $e^{2\lambda F_m}$ is a decreasing function of $(t_e)$ whereas $T_z$ is increasing) and have defined the new variable 
\[ 
Z_{z,x}:= Y_{z,x} \mathbf{1}_{\{Y_{z,x}> \oldconstant{D_{12}} T_z\}}\ . 
\]

We will bound $\mathbb{P}(Z_{z,x}\geq n)$ in what follows. Analogously to the proof of Proposition \ref{prop: kubota}, define, for $\newconstant\label{fixie}>0$,
\[A'_n:= \left\{ \exists \text{ a self-avoiding } \gamma \text{ from } z \text{ to } z+x \text{ with } N(\gamma) \geq n \text{ but } T(\gamma) < \oldconstant{fixie} N(\gamma)\right\}. \]
Our first task is to control $\mathbb{P}(A_n')$.
\begin{lma}\label{lem: hanson_mansion}
There exist $\oldconstant{fixie},\newconstant\label{D_{13}}>0$ such that $\mathbb{P}(A_n') \leq e^{-\oldconstant{D_{13}} n}$ for all $n\geq 1$.
\end{lma}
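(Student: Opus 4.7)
The plan is to adapt the proof of Proposition~\ref{prop: kubota}, now tracking the weighted sum $N(\gamma) = \sum_{e\in\gamma} w_e$ rather than the unweighted length $\#\gamma$. Any self-avoiding path from $z$ to $z+x$ contains $z$, so by translation invariance it suffices to control the analogous event for paths containing the origin, which is exactly the setting in which the greedy lattice animal bound of Proposition~\ref{prop:latticeanbound} applies.

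The key observation is a dichotomy. Let $a$ be the constant from Proposition~\ref{prop: kesten_length}, and fix a multiplier $M$ large enough that Proposition~\ref{prop:latticeanbound} delivers a bound of the form $\mathbb{P}(N_\ell \geq M\ell) \leq e^{-c M\ell}$ for some $c>0$ and all $\ell \geq 1$ (so $M$ must exceed the typical-growth rate of $N_\ell/\ell$, which is controlled by $\mathbb{E}w_e$ and the sub-exponential tail of $w_e$). Set $\oldconstant{fixie} := a/(2M)$. Then any self-avoiding $\gamma$ realizing $A'_n$ satisfies either (i) $N(\gamma) \geq M\#\gamma$, so the weights $w_e$ along $\gamma$ are atypically large, or (ii) $N(\gamma) < M\#\gamma$, in which case $T(\gamma) < \oldconstant{fixie} N(\gamma) < (a/2)\#\gamma$ while $n \leq N(\gamma) < M\#\gamma$ forces $\#\gamma > n/M$. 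Accordingly I would write $A'_n \subseteq B^{(1)}_n \cup B^{(2)}_n$ with
\[
B^{(1)}_n := \{\exists\text{ s.a.\ }\gamma\ni z : N(\gamma)\geq\max(n,M\#\gamma)\},\qquad B^{(2)}_n := \{\exists\text{ s.a.\ }\gamma\ni z : \#\gamma\geq\lceil n/M\rceil,\ T(\gamma)<a\#\gamma\}.
\]

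To bound $B^{(2)}_n$, translation invariance and the summed Kesten estimate \eqref{eq: sum_bound} immediately give $\mathbb{P}(B^{(2)}_n) \leq e^{-\oldconstant{C_2}\lceil n/M\rceil}$. To bound $B^{(1)}_n$, I would union-bound over the length $\ell = \#\gamma$ and dominate by the greedy lattice animal maximum $N_\ell$:
\[
\mathbb{P}(B^{(1)}_n) \leq \sum_{\ell \geq 1} \mathbb{P}(N_\ell \geq \max(n,M\ell)),
\]
then split the sum at $\ell = \lfloor n/M \rfloor$. For $\ell \leq n/M$ the threshold $n$ satisfies $n/\ell \geq M$, so Proposition~\ref{prop:latticeanbound} gives each term at most $e^{-cn}$, contributing $O(n)e^{-cn}$ in total; for $\ell > n/M$ the threshold $M\ell$ gives terms $e^{-cM\ell}$ that sum geometrically to $O(e^{-cn})$.

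The only delicate point is constant bookkeeping: $M$ must be chosen large enough that Proposition~\ref{prop:latticeanbound} yields genuine large-deviation decay for $N_\ell/\ell$, and $\oldconstant{fixie}<a/M$ must then be small enough so case (ii) activates Kesten's bound via \eqref{eq: sum_bound}. Both choices are available with no tension between them, so I do not expect a substantive obstacle; combining the two halves yields $\mathbb{P}(A'_n) \leq c_1 e^{-c_2 n}$ for some $c_1,c_2>0$, and the lemma follows with $\oldconstant{D_{13}}$ any positive number smaller than $\min(\oldconstant{C_2}/M,\,c)$.
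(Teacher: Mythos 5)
Your proposal is correct and follows essentially the same route as the paper: it keys on the dichotomy $N(\gamma) \gtrless M\#\gamma$, applies the greedy lattice animal bound (Proposition~\ref{prop:latticeanbound}) in two length regimes split at $\#\gamma\approx n/M$, and handles the complementary case via Kesten's estimate through \eqref{eq: sum_bound}, with $\oldconstant{fixie}$ chosen small against $a/M$. The paper organizes the union bound into three explicit sub-events rather than your two, but the thresholds, constants, and underlying tools are the same.
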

\begin{proof}
By translation invariance we can consider $z=0$. The content of Proposition \ref{prop:latticeanbound} is that there exist constants $\newconstant\label{D_{15}},\newconstant\label{D_{16}} >0$ such that
\[
\mathbb{P}\left(\exists \text{ a self-avoiding }\gamma \text{ from } 0 \text{ with } \#\gamma = n \text{ such that } N(\gamma) > \oldconstant{D_{15}} n\right) \leq e^{-\oldconstant{D_{16}} n},~n \geq 1\ .
\]
Summing this over $n$ gives $\newconstant\label{D_{17}} < \infty$ such that
\begin{equation}
\label{eq:lengthbd2}
\mathbb{P}\left(\exists \text{ a self-avoiding }\gamma \text{ from } 0 \text{ such that } \#\gamma > n \text{ and }N(\gamma) > \oldconstant{D_{15}} \#\gamma\right) \leq e^{-\oldconstant{D_{17}} n}.
\end{equation}
Further, given $\newconstant\label{D_{18}}>0$,
\[
\mathbb{P}\left( \exists \text{ a self-avoiding }\gamma \text{ from } 0 \text{ with } \# \gamma \leq \oldconstant{D_{18}} n \text{ but } N(\gamma) \geq n\right) \leq \sum_{k=1}^{\oldconstant{D_{18}} n} \mathbb{P}(N_k \geq n)\ .
\]
If we choose $\beta$ in Proposition~\ref{prop:latticeanbound} for $\kappa =2$, then for some $\newconstant\label{D_{19}}, \newconstant\label{D_{20}}>0$, this is bounded by
\[
e^{-\beta n} \sum_{k=1}^{\oldconstant{D_{18}} n} \mathbb{E}e^{\beta N_k} \leq e^{-\beta n} \sum_{k=1}^{\oldconstant{D_{18}} n} e^{2k} = e^{-\beta n} \frac{e^{2(\oldconstant{D_{18}} n +1)}- e^2}{e^2 - 1} \leq \oldconstant{D_{19}} e^{-\oldconstant{D_{20}} n}\ ,
\]
if $\oldconstant{D_{18}}$ is small enough. Combining this with \eqref{eq:lengthbd2},
\begin{align*}
\mathbb{P}(A_n') &\leq \oldconstant{D_{19}} e^{-\oldconstant{D_{20}} n} + e^{-\oldconstant{D_{17}} \lfloor \oldconstant{D_{18}} n \rfloor} \\
&+ \mathbb{P}(\exists \text{ a self-avoiding } \gamma \text{ from } 0 \text{ with } \#\gamma > \oldconstant{D_{18}} n \text{ but } T(\gamma) < \oldconstant{fixie} \oldconstant{D_{15}} \#\gamma)\ .
\end{align*}
By \eqref{eq: kesten_length}, for small $\oldconstant{fixie}$, the last probability is bounded by $e^{-\newconstant n}$. Therefore $\mathbb{P}(A_n') \leq e^{-\newconstant n}$.
\end{proof}

From Lemma~\ref{lem: hanson_mansion}, we can decompose
\begin{align*}
\mathbb{P}\left(Z_{z,x} \geq n\right) &\leq \mathbb{P}\left(Z_{z,x} \geq n, \, (A_n')^c\right) + \mathbb{P}(A_n')\\
&\leq \mathbb{P}\left(Z_{z,x} \geq n, \, (A_n')^c\right) + e^{-\oldconstant{D_{13}} n}.
\end{align*}
Consider some outcome in $(A_n')^c$ such that $Z_{z,x} \geq n > 0$. For this outcome, we must have
\[ 
\oldconstant{D_{12}} T_z < Y_{z,x} \leq \frac{1}{\oldconstant{fixie}} T_z\ ,
\]
a contradiction for $\oldconstant{D_{12}} > \oldconstant{fixie}^{-1}$. This implies that independent of $x, z$ and $n,$ there exists $\oldconstant{D_{12}}$ such that
\[ \mathbb{P}\left(Z_{z,x} \geq n\right) \leq e^{-\oldconstant{D_{13}} n}\]
and, in particular, 
\[
\sup_{x \neq 0} \sup_{z \in B_m} \mathbb{E}e^{\delta Z_{z,x}} < \infty \text{ for } \delta = \oldconstant{D_{13}}/2\ .
\]

Now, to bound the second term of \eqref{eq:introducingZ} we apply Proposition~\ref{prop: variation} using our bound on $Z_{z,x}$. Namely, we obtain for some $\newconstant\label{D_{22}}$
\begin{align*}
\mathbb{E}e^{2\lambda F_m}Z_{z,x} &\leq \delta^{-1} \left[ Ent(e^{2 \lambda F_m}) + \mathbb{E}e^{2\lambda F_m} \log \mathbb{E}e^{\delta Z_{z,x}} \right] \\
&\leq \oldconstant{D_{22}} \left[ Ent(e^{2\lambda F_m}) + \mathbb{E}e^{2\lambda F_m} \right]\ ,
\end{align*}
implying
\[
\eqref{eq:expectbrack} \leq \newconstant \left[ Ent(e^{2\lambda F_m}) + (1+ \mathbb{E}T(0,x)) \mathbb{E}e^{2\lambda F_m} \right]\ .
\]

So we conclude that if $\lambda\leq 0$ and we assume \eqref{eq: two_moments}, then for some $\newconstant\label{D_{23}}$,
\begin{equation}\label{eq: negative}
\sum_{k=1}^\infty Ent(V_k^2) \leq \lambda^2 \oldconstant{D_{23}}C_{\lambda} \left[ Ent(e^{2\lambda F_m}) + (1+ \mathbb{E}T(0,x)) \mathbb{E}e^{2\lambda F_m} \right] \text{ for all } x \in \mathbb{Z}^d\ .
\end{equation}

\section{Proof of Theorem~\ref{thm: F_m_variance_bound}}
First we must complete the upper bound for $\sum_{k=1}^\infty Ent(V_k^2)$. What we have shown so far is (Theorem~\ref{thm: next_step}) that under \eqref{eq: exponential_moments} with $\lambda \in [0,\alpha/2)$ or \eqref{eq: two_moments} with $\lambda \leq 0$, setting $C_\lambda$ as in Theorem~\ref{thm: entropy_bound},
\[
\sum_{k=1}^\infty Ent(V_k^2) \leq \lambda^2 \oldconstant{D_6}C_\lambda \left[ Ent(e^{2\lambda F_m}) + (1+\mathbb{E}F_m)\mathbb{E}e^{2\lambda F_m} \right] \text{ for all } x \in \mathbb{Z}^d\ .
\]
This is close to the bound we would like, except there is an entropy term on the right. To bound this in terms of the moment generating function, we must use some techniques from Boucheron-Lugosi-Massart, similarly to what was done in Bena\"im-Rossignol (below (15) in \cite[Corollary~4.3]{BR}). Because these arguments lead us a bit astray, we place them in the appendix. By Theorem~\ref{thm: BLM_exponential} under \eqref{eq: exponential_moments}, we can transform the upper bound into, for some $\newconstant\label{D_{25}},\newconstant\label{D_{27}}>0$,
\[
\sum_{k=1}^\infty Ent(V_k^2) \leq \lambda^2 \oldconstant{D_{27}}C_\lambda \left(\|x\|_1 + 1 + \mathbb{E}F_m\right) \mathbb{E}e^{2\lambda F_m} \text{ for } x \in \mathbb{Z}^d \text{ and } 0 \leq \lambda \leq \oldconstant{D_{25}}\ .
\]
Using $\mathbb{E}F_m \leq \newconstant \|x\|_1$, we obtain
\[
\sum_{k=1}^\infty Ent(V_k^2) \leq \lambda^2 \newconstant C_\lambda \|x\|_1 \mathbb{E}e^{2\lambda F_m} \text{ for } x \in \mathbb{Z}^d \text{ and } 0 \leq \lambda \leq \oldconstant{D_{25}} \text{ under } \eqref{eq: exponential_moments}\ .
\]

On the other hand, when we assume \eqref{eq: two_moments}, Theorem~\ref{thm: BLM_two} gives the upper bound (with $\newconstant\label{D_{30}}$ from that theorem)
\[
\sum_{k=1}^\infty Ent(V_k^2) \leq \lambda^2\newconstant C_\lambda (1+\|x\|_1 + \mathbb{E}F_m)\mathbb{E}e^{2\lambda F_m} \text{ for } -\oldconstant{D_{30}}/2\leq \lambda \leq 0\ ,
\]
which again implies
\[
\sum_{k=1}^\infty Ent(V_k^2) \leq \lambda^2 \newconstant C_\lambda \|x\|_1 \mathbb{E}e^{2\lambda F_m} \text{ for } -\oldconstant{D_{30}}/2 \leq \lambda \leq 0 \text{ under } \eqref{eq: two_moments}\ .
\]
If we further restrict the range of $\lambda$ we can bound $C_\lambda$ using assumptions \eqref{eq: two_moments} and \eqref{eq: exponential_moments} and find for some $\newconstant\label{D_{32}}>0$,
\begin{equation}\label{eq: final_entropy_bound}
\sum_{k=1}^\infty Ent(V_k^2) \leq \lambda^2 \oldconstant{D_{32}}\|x\|_1 \mathbb{E}e^{2\lambda F_m}, x \in \mathbb{Z}^d\ ,
\end{equation}
where $-\newconstant\label{D_{33}} \leq \lambda \leq 0$ under \eqref{eq: two_moments} and $0 \leq \lambda \leq \oldconstant{D_{33}}$ under \eqref{eq: exponential_moments}.

We can finally place this bound back in the Falik-Samorodnitsky inequality \eqref{eq: BR_lower_bound} along with the bound on influences from Proposition~\ref{prop: V_k_bound}. We then obtain
\begin{equation}\label{eq: almost_done}
\Var e^{\lambda F_m} \leq \left[ \log \frac{\Var e^{\lambda F_m}}{\oldconstant{C_99} \lambda^2 \|x\|_1^{\frac{2+\zeta(1-d)}{2}} \mathbb{E}e^{2\lambda F_m}} \right]^{-1} \lambda^2 \oldconstant{D_{32}}\|x\|_1 \mathbb{E}e^{2\lambda F_m},~x \in \mathbb{Z}^d\ .
\end{equation}
Again, this holds for $-\newconstant\label{D_{40}}\leq \lambda \leq 0$ under \eqref{eq: two_moments} and $0 \leq \lambda \leq \oldconstant{D_{40}}$ under \eqref{eq: exponential_moments}. (Here we have used that $\oldconstant{D_{40}}$ can be slightly lowered to ensure that the term $\mathbb{E}((1+e^{\lambda t_e})t_e)^2$ is bounded by a constant under either assumption.)

From \eqref{eq: almost_done} we are almost done with the proof of Theorem~\ref{thm: F_m_variance_bound}. For any $d \geq 2$, $\frac{2+\zeta(1-d)}{2} \leq \frac{2-\zeta}{2}$, and so for every $\lambda$ either we have
\begin{equation}\label{eq: end_case_1}
\Var e^{\lambda F_m} \leq \oldconstant{C_99} \lambda^2 \|x\|_1^{\frac{4-\zeta}{4}} \mathbb{E}e^{2\lambda F_m}\ ,
\end{equation}
in which case we have inequalities \eqref{eq: variance_1} and \eqref{eq: variance_2} for $\|x\|_1 >1$ (with a possibly different constant, and replacing $\lambda$ with $2\lambda$), or the opposite inequality holds, in which case
\[
\frac{\Var e^{\lambda F_m}}{\oldconstant{C_99}\lambda^2 \|x\|_1^{\frac{2+\zeta(1-d)}{2}}\mathbb{E}e^{2\lambda F_m}} \geq \|x\|_1^{\frac{\zeta}{4}}\ .
\]
Therefore when \eqref{eq: end_case_1} fails,
\[
\Var e^{\lambda F_m} \leq \lambda^2 \oldconstant{D_{32}}\frac{\|x\|_1}{\frac{\zeta}{4} \log \|x\|_1} \mathbb{E}e^{2\lambda F_m}\ ,
\]
implying \eqref{eq: variance_1} and \eqref{eq: variance_2} again. This completes the proof of Theorem~\ref{thm: F_m_variance_bound}. 

To recap, Lemma~\ref{lem: iteration} shows that Theorem~\ref{thm: F_m_variance_bound} suffices to prove exponential concentration for $F_m$ (Theorem~\ref{thm: F_m_concentration}). Last, Theorem~\ref{thm: main_thm} follows from Theorem~\ref{thm: F_m_concentration} by the arguments in Section~\ref{sec: reduction}.

\appendix
\section{Preliminary entropy bounds}
In this appendix, we use ideas from the entropy method and from \cite{DK} to show bounds for the entropy of $e^{\lambda F_m}$. At the end we explain how these give a simple proof of Talagrand's inequality.

\subsection{Log Sobolev inequality}
We will use the ``symmetrized log Sobolev inequality'' of Boucheron-Lugosi-Massart \cite[Theorem~6.15]{BLM}.

\begin{thm}
Let $X$ be a random variable and let $X'$ be an independent copy. Then for $\lambda \in \mathbb{R}$,
\begin{equation}\label{eq: first_one}
Ent~e^{\lambda X} \leq \mathbb{E}\left[ e^{\lambda X} q(\lambda (X'-X)_+) \right]
\end{equation}
where $q(x) = x(e^x-1)$.
\end{thm}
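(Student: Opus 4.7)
The plan is to prove this symmetrized log Sobolev inequality by starting from the variational characterization of entropy and exploiting the symmetry between $X$ and $X'$ to reduce everything to an integrated form involving only the positive part $(X'-X)_+$.

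First I would show the general intermediate inequality that for any i.i.d. pair $X,X'$ and any nonnegative $f$ with $\mathbb{E}f(X)\log f(X) < \infty$,
\[
Ent~f(X) \leq \mathbb{E}_{X,X'}\left[f(X)\log\frac{f(X)}{f(X')}\right].
\]
This follows by writing $Ent~f(X) = \mathbb{E}[f(X)\log f(X)] - \mathbb{E}f(X)\log\mathbb{E}f(X)$ and applying Jensen's inequality to the concave function $\log$: since $\mathbb{E}\log f(X') \leq \log\mathbb{E}f(X')= \log\mathbb{E}f(X)$, we get $-\mathbb{E}f(X)\log\mathbb{E}f(X) \leq -\mathbb{E}f(X)\mathbb{E}\log f(X')$; using independence and identical distribution rearranges this to the claimed inequality. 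This is the step that replaces the deterministic normalizer $\mathbb{E}f$ with the ``tilt'' coming from the independent copy $f(X')$.

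Next I would specialize to $f(x) = e^{\lambda x}$, so $\log f(X) - \log f(X') = \lambda(X - X')$, giving
\[
Ent~e^{\lambda X} \leq \lambda\,\mathbb{E}_{X,X'}\left[e^{\lambda X}(X - X')\right].
\]
Because $(X,X')$ has the same law as $(X',X)$, I can symmetrize: $\lambda\mathbb{E}[e^{\lambda X}(X-X')] = \tfrac{\lambda}{2}\mathbb{E}[(e^{\lambda X} - e^{\lambda X'})(X-X')]$. Splitting the difference $X - X' = (X-X')_+ - (X'-X)_+$ and using the antisymmetry $\mathbb{E}[(e^{\lambda X} - e^{\lambda X'})(X-X')_+] = -\mathbb{E}[(e^{\lambda X} - e^{\lambda X'})(X'-X)_+]$ (again by swapping $X\leftrightarrow X'$) I can collapse this to a single expectation
\[
Ent~e^{\lambda X} \leq \mathbb{E}\left[(e^{\lambda X'}-e^{\lambda X})\cdot \lambda(X'-X)_+\right].
\]

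Finally I factor $e^{\lambda X'}-e^{\lambda X}=e^{\lambda X}(e^{\lambda(X'-X)}-1)$ and observe that on the event $\{X'>X\}$ one has $\lambda(X'-X)_+ = \lambda(X'-X)$, so that $e^{\lambda(X'-X)} = e^{\lambda(X'-X)_+}$, while on $\{X'\leq X\}$ both $(X'-X)_+$ and $q(\lambda(X'-X)_+) = \lambda(X'-X)_+(e^{\lambda(X'-X)_+}-1)$ vanish and contribute nothing. Thus the integrand equals $e^{\lambda X}q(\lambda(X'-X)_+)$ almost surely, proving the inequality for every $\lambda\in\mathbb{R}$. The main thing to be careful about is the bookkeeping of signs when $\lambda<0$: the product $(e^{\lambda X'}-e^{\lambda X})\cdot\lambda(X'-X)_+$ is nonnegative for both signs of $\lambda$ (both factors flip signs together), which is precisely what makes $q(u) = u(e^u-1) \geq 0$ the right envelope on both sides.
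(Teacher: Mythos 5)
Your proof is correct, and it follows the standard decoupling-plus-symmetrization route that Boucheron--Lugosi--Massart use for this symmetrized modified log-Sobolev inequality (the paper simply cites \cite[Theorem~6.15]{BLM} and does not reprove it). The three steps---the decoupling bound $Ent\,f(X)\leq\mathbb{E}[f(X)\log(f(X)/f(X'))]$ via Jensen applied to $\mathbb{E}\log f(X')\leq\log\mathbb{E}f(X')$, the exchange-symmetry reduction to the term supported on $\{X'>X\}$, and the pointwise identity $(e^{\lambda X'}-e^{\lambda X})\,\lambda(X'-X)_+ = e^{\lambda X}q(\lambda(X'-X)_+)$---are all sound, and your sign-bookkeeping remark for $\lambda<0$ is exactly the right thing to check. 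One minor observation worth recording: your last step is an \emph{equality} of integrands, not merely an inequality, so in fact the display after the first symmetrization already coincides with the claimed bound; the only genuine inequality in the whole argument is the Jensen step. You also implicitly need $\mathbb{E}e^{\lambda X}<\infty$ (so that the entropy is defined) and $\mathbb{E}|X|<\infty$ (so that $\mathbb{E}\log f(X')=\lambda\mathbb{E}X'$ is finite); in the regimes where this is violated the statement is vacuous or the right side is $+\infty$, and this is the standard convention, but it would not hurt to say so.
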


\subsection{Application to $F_m$}

\subsubsection{Positive exponential}

\begin{thm}\label{thm: BLM_exponential}
Assuming \eqref{eq: exponential_moments}, there exist $\newconstant\label{C_55}, \newconstant{\label{D_{26}}}>0$ such that
\[
Ent~e^{\lambda F_m} \leq \oldconstant{C_55} \lambda^2 \|x\|_1 \mathbb{E}e^{\lambda F_m} \text{ for all } x \in \mathbb{Z}^d \text{ and } \lambda \in [0,\oldconstant{D_{26}}]\ .
\]
\end{thm}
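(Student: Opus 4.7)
The plan is to combine the symmetrized log-Sobolev inequality stated above with tensorization (Proposition~\ref{prop: tensor}), the linearization of the passage time (Lemma~\ref{lem: DHS_lem}), and the exponential length bound (Corollary~\ref{cor: exponential_length_cor}(2)). A self-referential absorption step at the end will force the restriction to small $\lambda$.

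Writing $F_m^{(e)}$ for $F_m$ evaluated at the configuration in which $t_e$ is replaced by an independent copy $t_e'$, tensorization combined with the single-variable symmetrized log-Sobolev inequality yields
\[
Ent\, e^{\lambda F_m} \leq \sum_e \mathbb{E}\left[ e^{\lambda F_m}\, q\!\left(\lambda(F_m^{(e)} - F_m)_+\right)\right].
\]
By convexity of $x \mapsto x_+$ and Lemma~\ref{lem: DHS_lem}, the key increment satisfies
\[
(F_m^{(e)} - F_m)_+ \leq \frac{1}{\# B_m}\sum_{z \in B_m}(T_z^{(e)}-T_z)_+ \leq \frac{t_e'}{\# B_m}\, N_e, \qquad N_e := \#\{z \in B_m : e \in Geo(z,z+x)\}.
\]
Using the elementary bound $q(y) \leq y^2 e^y$ for $y \geq 0$, together with the cruder estimate $(F_m^{(e)}-F_m)_+ \leq t_e'$ inside the exponential factor, gives
\[
q\!\left(\lambda(F_m^{(e)} - F_m)_+\right) \leq \lambda^2\, (t_e')^2 \left(N_e/\#B_m\right)^2 e^{\lambda t_e'}.
\]

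Since $t_e'$ is independent of the remaining configuration, and $\mathbb{E}[(t_e')^2 e^{\lambda t_e'}]$ is bounded uniformly for $\lambda$ in a neighborhood of zero under \eqref{eq: exponential_moments}, we may factor it out. Using $N_e \leq \# B_m$ pointwise, one estimates $\sum_e N_e^2 \leq \# B_m \sum_z \# Geo(z,z+x) \leq \# B_m \sum_z G(z,z+x)$, which reduces the bound to
\[
Ent\, e^{\lambda F_m} \leq c\, \lambda^2\, \frac{1}{\# B_m} \sum_{z \in B_m} \mathbb{E}\left[ e^{\lambda F_m}\, G(z,z+x) \right]
\]
for some $c > 0$. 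The variational characterization (Proposition~\ref{prop: variation}) applied with $Y = \delta G(z,z+x)$ provides
\[
\mathbb{E}\left[ e^{\lambda F_m}\, G(z,z+x) \right] \leq \delta^{-1}\!\left[ Ent(e^{\lambda F_m}) + \mathbb{E} e^{\lambda F_m}\, \log \mathbb{E} e^{\delta G(z,z+x)}\right],
\]
and translation invariance together with Corollary~\ref{cor: exponential_length_cor}(2) yields $\log \mathbb{E} e^{\delta G(z,z+x)} \leq c'\|x\|_1$ for $\delta$ sufficiently small.

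Combining these steps, we arrive at an inequality of the form $Ent(e^{\lambda F_m}) \leq A\lambda^2\left[Ent(e^{\lambda F_m}) + c''\|x\|_1\, \mathbb{E} e^{\lambda F_m}\right]$ for constants $A, c'' > 0$. The main obstacle is precisely this self-referential bound: it becomes useful only once $\lambda$ is restricted so that $A\lambda^2 \leq 1/2$, after which the entropy term on the right can be absorbed into the left. Choosing $\oldconstant{D_{26}}$ small enough to enforce this condition for all $\lambda \in [0, \oldconstant{D_{26}}]$ gives the claimed bound $Ent(e^{\lambda F_m}) \leq \oldconstant{C_55} \lambda^2 \|x\|_1 \mathbb{E} e^{\lambda F_m}$. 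One should also verify that the constants depend only on the moment parameter $\alpha$ from \eqref{eq: exponential_moments} and not on $x$, which follows from the translation-invariance used throughout.
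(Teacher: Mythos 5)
Your argument is correct and follows the same essential route as the paper's: tensorize the symmetrized log-Sobolev inequality, control the increment via Lemma~\ref{lem: DHS_lem}, integrate out the resampled weight, decouple $e^{\lambda F_m}$ from the geodesic-length term using Proposition~\ref{prop: variation} together with \eqref{eq: exponential_length_bound}, and absorb the resulting entropy term by restricting $\lambda$. The only cosmetic differences are in how the intermediate bound is organized: the paper applies monotonicity and convexity of $q$ directly to push the average over $z$ inside $q$ (yielding the factor $\mathbb{E}q(\lambda t_e)$, with $\mathbb{E}q(\lambda t_e)/\lambda^2 \to \mathbb{E}t_e^2$), whereas you use convexity of $x\mapsto x_+$, the crude bound $q(y)\leq y^2 e^y$, and a per-edge multiplicity $N_e$ that you then convert back to $\sum_z \#Geo(z,z+x)$ via $N_e \leq \#B_m$. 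Both yield a constant $\mathbb{E}[t_e^2 e^{\lambda t_e}]$ (resp.\ $\mathbb{E}q(\lambda t_e)/\lambda^2$) that is bounded for small $\lambda$ under \eqref{eq: exponential_moments}, and the remaining steps coincide. No gap.
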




\begin{proof}
By tensorization of entropy (Proposition~\ref{prop: tensor}),
\[
Ent~e^{\lambda F_m} \leq \sum_{k=1}^\infty \mathbb{E} Ent_{e_k}~e^{\lambda F_m}, \text{ for } \lambda \in [0,\alpha/2).
\]
Introduce $F_m^{(k)}$ as the variable $F_m$ evaluated at the configuration in which $t_{e_k}$ is replaced by an independent copy $t_{e_k}'$. Then we can apply \eqref{eq: first_one} conditionally:
\[
Ent~e^{\lambda F_m} \leq \sum_{k=1}^\infty \mathbb{E} \mathbb{E}_{e_k} e^{\lambda F_m} q(\lambda(F_m^{(k)}-F_m)_+)\leq \sum_{k=1}^\infty \mathbb{E} \mathbb{E}_{e_k} e^{\lambda F_m} q\left( \frac{1}{\#B_m} \sum_{z \in B_m} \lambda(T_z^{(k)}-T_z)_+ \right)\ .
\]
Convexity of $q$ on $[0,\infty)$ gives the upper bound
\[
\frac{1}{\#B_m} \sum_{z \in B_m} \sum_{k=1}^\infty \mathbb{E} \mathbb{E}_{e_k} e^{\lambda F_m} q\left( \lambda (T_z^{(k)}-T_z)_+ \right)\ .
\]
Lemma~\ref{lem: DHS_lem} implies that $(T_z^{(k)}-T_z)_+ \leq t_{e_k}' \mathbf{1}_{\{e_k \in Geo(z,z+x)\}}$ so we get the bound
\[
\frac{1}{\#B_m} \sum_{z \in B_m} \sum_{k=1}^\infty \mathbb{E}\mathbb{E}_{e_k} e^{\lambda F_m} q(\lambda t_{e_k}') \mathbf{1}_{\{e_k \in Geo(z,z+x)\}}\ .
\]
Integrate $t_{e_k}'$ first and bring the sum inside the integral for
\[
\frac{\mathbb{E}q(\lambda t_e)}{\#B_m} \sum_{z \in B_m} \mathbb{E} \left[ e^{\lambda F_m} \# Geo(z,z+x) \right]\ .
\]
Note that under \eqref{eq: exponential_moments}, $\mathbb{E}q(\lambda t_e) < \infty$ for $\lambda \in [0,2\alpha)$.

To deal with this product, use Proposition~\ref{prop: variation} with $X=pe^{\lambda F_m}$ and $Y=p^{-1}\#Geo(z,z+x)$, where $p>0$ is a parameter, to obtain 
\[
Ent~e^{\lambda F_m} \leq \frac{\mathbb{E}q(\lambda t_e)}{\#B_m} \sum_{z \in B_m} \left[ pEnt~e^{\lambda F_m} + p\mathbb{E}e^{\lambda F_m} \log \mathbb{E}\exp \left( \frac{\#Geo(z,z+x)}{p} \right) \right]\ .
\]
Note that $\frac{\mathbb{E}q(\lambda t_e)}{\lambda^2} \to \mathbb{E}t_e^2$ as $\lambda \downarrow 0$, so for some $\lambda' \in [0,\alpha/2)$, whenever $\lambda \in [0,\lambda']$,
\[
Ent~e^{\lambda F_m} \leq \lambda^2 \frac{2\mathbb{E}t_e^2}{\#B_m} \sum_{z \in B_m} \left[ pEnt~e^{\lambda F_m} + p \mathbb{E}e^{\lambda F_m} \log \mathbb{E}\exp\left( \frac{\#Geo(z,z+x)}{p} \right) \right]\ .
\]

By translation invariance, the expression inside the sum does not depend on $z$. So our bound is
\[
Ent~e^{\lambda F_m} \leq 2\lambda^2 \mathbb{E}t_e^2 \left( pEnt~e^{\lambda F_m} + p \mathbb{E}e^{\lambda F_m} \log \mathbb{E}e^{\# Geo(0,x)/p} \right)\ .
\]
Choose $p = \alpha_1$ from \eqref{eq: exponential_length_bound} and set $\lambda'' \in [0,\lambda']$ such that if $\lambda \in [0,\lambda'']$ then $2\lambda^2 \mathbb{E}t_e^2 p \leq 1/2$. We obtain the bound for some $A_1>0$,
\[
\frac{1}{2} Ent~e^{\lambda F_m} + 2\lambda^2 \mathbb{E}t_e^2 A_1 \|x\|_1 \mathbb{E}e^{\lambda F_m}\ .
\]
So
\[
Ent~e^{\lambda F_m} \leq 4\lambda^2 \mathbb{E}t_e^2 A_1 \|x\|_1 \mathbb{E}e^{\lambda F_m}\ .
\]
\end{proof}

\subsubsection{Negative exponential}
The bound given below is similar to the one derived in \cite{DK} for $T$ instead of $F_m$.
\begin{thm}\label{thm: BLM_two}
Assume $\mathbb{E}t_e^2<\infty$. There exist $\newconstant\label{C_77}, \oldconstant{D_{30}}>0$ such that
\[
Ent~e^{\lambda F_m} \leq \oldconstant{C_77} \lambda^2 \|x\|_1 \mathbb{E}e^{\lambda F_m} \text{ for all } x \in \mathbb{Z}^d \text{ and } \lambda \in [-\oldconstant{D_{30}},0]\ .
\]
\end{thm}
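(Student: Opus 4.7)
The plan is to follow the template of Theorem~\ref{thm: BLM_exponential}, but now using only the $L^2$ assumption on $t_e$, together with the splitting idea from \cite{DK} that was also used in Section~\ref{sec: animals} for $\lambda \leq 0$. First, by tensorization (Proposition~\ref{prop: tensor}),
\[
Ent~e^{\lambda F_m} \leq \sum_{k=1}^\infty \mathbb{E}\,Ent_{e_k}~e^{\lambda F_m}\ ,
\]
and for each $k$ we apply the symmetrized log-Sobolev inequality \eqref{eq: first_one} conditionally in the single variable $t_{e_k}$, with $F_m^{(k)}$ the value of $F_m$ obtained from replacing $t_{e_k}$ by an independent copy $t_{e_k}'$. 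The key observation for $\lambda \leq 0$ is that $q(y) = y(e^y-1)$ satisfies $q(y)\leq y^2$ for $y \leq 0$, so
\[
Ent_{e_k}~e^{\lambda F_m} \leq \mathbb{E}_{e_k}\mathbb{E}_{e_k'} e^{\lambda F_m}\, q\bigl(\lambda(F_m^{(k)}-F_m)_+\bigr) \leq \lambda^2\,\mathbb{E}_{e_k}\mathbb{E}_{e_k'} e^{\lambda F_m}(F_m^{(k)}-F_m)_+^2\ .
\]

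Next I would move the square inside the average over $z \in B_m$ by convexity of $y\mapsto y_+^2$, and then use Lemma~\ref{lem: DHS_lem} to obtain $(T_z^{(k)}-T_z)_+^2 \leq (t_{e_k}')^2\,\mathbf{1}_{\{t_{e_k}<D_{z,e_k}\}}$. Integrating out $t_{e_k}'$ (possible thanks to $\mathbb{E}t_e^2<\infty$) and summing over $k$, the indicator $\mathbf{1}_{\{t_{e_k}<D_{z,e_k}\}}$ identifies $e_k$ as an edge in $Geo(z,z+x)$, giving
\[
Ent~e^{\lambda F_m} \leq \lambda^2\,\mathbb{E}t_e^2\,\frac{1}{\#B_m}\sum_{z\in B_m}\mathbb{E}\bigl[e^{\lambda F_m}\,\#Geo(z,z+x)\bigr]\ .
\]

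The main obstacle is that without \eqref{eq: exponential_moments} we cannot control $\#Geo(z,z+x)$ through an exponential moment bound on $G(0,x)$, as was done for the positive case. The remedy is the idea of \cite{DK}: using Proposition~\ref{prop: kubota}, write $\#Geo(z,z+x)\leq G(z,z+x) \leq T_z/a + Y_{z,x}$, where $Y_{z,x}$ has exponential tails uniformly in $x$. The $T_z$ term is handled by the Harris-FKG inequality, since $\lambda \leq 0$ makes $e^{\lambda F_m}$ decreasing and $T_z$ is increasing in $(t_e)$:
\[
\mathbb{E}[e^{\lambda F_m}T_z]\leq \mathbb{E}e^{\lambda F_m}\cdot\mathbb{E}T_z\leq \newconstant\|x\|_1\,\mathbb{E}e^{\lambda F_m}\ .
\]
For the $Y_{z,x}$ term, the variational characterization of entropy (Proposition~\ref{prop: variation}) applied with a small parameter $\delta>0$ such that $\mathbb{E}e^{\delta Y_{z,x}}$ is bounded uniformly in $z,x$ yields
\[
\mathbb{E}[e^{\lambda F_m}Y_{z,x}] \leq \delta^{-1}\bigl[Ent(e^{\lambda F_m}) + \mathbb{E}e^{\lambda F_m}\log\mathbb{E}e^{\delta Y_{z,x}}\bigr]\ .
\]

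Combining and using translation invariance to remove the $z$-average, one arrives at
\[
Ent~e^{\lambda F_m} \leq \lambda^2\,\mathbb{E}t_e^2\,\bigl[\delta^{-1}Ent(e^{\lambda F_m}) + \newconstant\|x\|_1\,\mathbb{E}e^{\lambda F_m}\bigr]\ .
\]
Choosing $\oldconstant{D_{30}}$ small enough that $\lambda^2\,\mathbb{E}t_e^2/\delta \leq 1/2$ for $\lambda\in[-\oldconstant{D_{30}},0]$, the entropy term is absorbed on the left, yielding the desired bound $Ent~e^{\lambda F_m} \leq \oldconstant{C_77}\lambda^2\|x\|_1\,\mathbb{E}e^{\lambda F_m}$. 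The only place where the sign of $\lambda$ matters crucially is in the FKG step for the $T_z$ piece; this is exactly the structural reason the negative-exponential case can get away with the weaker moment assumption.
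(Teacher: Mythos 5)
Your proposal reproduces the paper's own proof essentially step for step: tensorization plus the symmetrized log-Sobolev inequality, the bound $q(y)\le y^2$ for $y\le 0$, convexity of $y\mapsto y_+^2$, Lemma~\ref{lem: DHS_lem} to reduce to $\#Geo(z,z+x)$, the Kubota-style split $\#Geo(z,z+x) \le T_z/a + (\text{variable with uniform exponential tails})$, Harris--FKG for the $T_z$ piece, the variational entropy formula for the remainder, and absorption of the resulting entropy term for $|\lambda|$ small. The only cosmetic difference is that you pass through $G(z,z+x)$ before splitting, whereas the paper splits $\#Geo(z,z+x)$ directly using the indicator $\mathbf{1}_{\{a\#Geo(z,z+x)>T_z\}}$ and bounds that truncation by the $Y_x$ of Proposition~\ref{prop: kubota}; both give the same estimate. (Minor caution: your $Y_{z,x}$ collides notationally with the lattice-animal weight $Y_{z,x}=\sum_{e\in Geo(z,z+x)}w_e$ of Section~\ref{sec: animals}, but the intent is clear.)
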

\begin{proof}
Again we use \eqref{eq: first_one} with tensorization:
\[
Ent ~e^{\lambda F_m} \leq \sum_{k=1}^\infty \mathbb{E}\mathbb{E}_{e_k} e^{\lambda F_m} q(\lambda (F_m^{(k)} - F_m)_+)\ ,
\]
By the inequality $q(x) \leq x^2 \text{ for } x \leq 0$ we obtain the upper bound
\[
\lambda^2 \sum_{k=1}^\infty \mathbb{E}\mathbb{E}_{e_k} e^{\lambda F_m} ((F_m^{(k)}- F_m)_+)^2\ .
\]
By convexity of $x \mapsto (x_+)^2$, this is bounded by
\[
\lambda^2 \frac{1}{\# B_m} \sum_{z\in B_m} \sum_{k=1}^\infty \mathbb{E} \mathbb{E}_{e_k} e^{\lambda F_m} ((T_z^{(k)}-T_z)_+)^2
\]
and using Lemma~\ref{lem: DHS_lem}, by
\[
\lambda^2 \frac{1}{\# B_m} \sum_{z\in B_m} \sum_{k=1}^\infty \mathbb{E} \mathbb{E}_{e_k} e^{\lambda F_m} (t_{e_k}' \mathbf{1}_{\{e_k \in Geo(z,z+x)\}})^2\ .
\]
Note that $t_{e_k}'$ is independent of $e^{\lambda F_m}\mathbf{1}_{\{e_k \in Geo(z,z+x)\}}$. So we integrate over $t_{e_k}'$ first to get
\begin{equation}\label{eq: almost_yeah}
\lambda^2 \mathbb{E}t_e^2 \frac{1}{\#B_m} \sum_{z \in B_m} \mathbb{E} e^{\lambda F_m} \#Geo(z,z+x)\ .
\end{equation}

To complete the proof, take $a$ from Proposition~\ref{prop: kubota} and upper bound the expectation as
\begin{equation}\label{eq: yeah}
(1/a) \mathbb{E}e^{\lambda F_m} T_z + \mathbb{E}e^{\lambda F_m} \#Geo(z,z+x) \mathbf{1}_{\{a\#Geo(z,z+x) >T_z\}}\ .
\end{equation}
The variable $e^{\lambda F_m}$ is decreasing as a function of the edge-weights (since $\lambda \leq 0$) whereas $T_z$ is increasing. So apply the Harris-FKG inequality to the first term for an upper bound of
\[
(1/a) \mathbb{E}e^{\lambda F_m} \mathbb{E}T_z = (1/a) \mathbb{E} e^{\lambda F_m} \mathbb{E}T(0,x)\ .
\]
For the second term call $Y = \#Geo(z,z+x)\mathbf{1}_{\{a\# Geo(z,z+x) > T_z\}}$ and use Proposition~\ref{prop: variation}. Taking $\delta = \oldconstant{C_2}/2$ from Proposition~\ref{prop: kubota}, we have $\mathbb{E}e^{\delta Y} \leq \mathbb{E}e^{\delta Y_x} < \newconstant\label{C_97}$ for some $\oldconstant{C_97}$ and so
\[
\mathbb{E}e^{\lambda F_m} Y \leq \delta^{-1} Ent~e^{\lambda F_m} + \delta^{-1} \oldconstant{C_97}\mathbb{E}e^{\lambda F_m}\ .
\]
Returning to \eqref{eq: almost_yeah}, we obtain
\[
Ent~e^{\lambda F_m} \leq \lambda^2 \mathbb{E}t_e^2 \left[ (1/a) \mathbb{E}e^{\lambda F_m} \mathbb{E}T(0,x) + \delta^{-1}Ent~e^{\lambda F_m} + \delta^{-1} \oldconstant{C_97} \mathbb{E}e^{\lambda F_m}\right]\ .
\]
Now restrict to $\lambda \leq 0$ such that $-\left(2\mathbb{E}t_e^2 \delta^{-1}\right)^{-1/2} \leq \lambda$ to obtain
\[
(1/2)Ent~e^{\lambda F_m} \leq \lambda^2 \mathbb{E}t_e^2 \left[ (1/a) \mathbb{E}e^{\lambda F_m} \mathbb{E}T(0,x) + \delta^{-1}\oldconstant{C_97} \mathbb{E}e^{\lambda F_m}\right]\ .
\]
Last, we bound $\mathbb{E}T(0,x) \leq \newconstant \|x\|_1$ to get 
\[
Ent~e^{\lambda F_m} \leq \lambda^2 \newconstant \|x\|_1 \mathbb{E}e^{\lambda F_m} \text{ for } x \in \mathbb{Z}^d,~ -(2\mathbb{E}t_e^2 \delta^{-1})^{-1/2} \leq \lambda \leq 0\ .
\]
\end{proof}

\subsection{Gaussian concentration for $F_m$}\label{sec: talagrand}

As a result of the above entropy bounds, we have the following concentration inequality for $F_m$. 
\begin{cor}
Assuming \eqref{eq: exponential_moments}, there exist positive $\newconstant{\label{C_lasagna}}$ and $\newconstant{\label{C_pizza}}$ such that for all $x \in \mathbb{Z}^d$,
\[
\mathbb{P}(F_m - \mathbb{E}F_m \geq \lambda \sqrt{\|x\|_1}) \leq e^{-\oldconstant{C_lasagna}\lambda^2} \text{ for } \lambda \in [0,\oldconstant{C_pizza} \sqrt{\|x\|_1}]\ .
\]
Assuming $\mathbb{E}t_e^2<\infty$, there exists $\newconstant{\label{C_pizza1}}>0$ such that for all $x \in \mathbb{Z}^d$,
\[
\mathbb{P}(F_m - \mathbb{E}F_m \leq -\lambda \sqrt{\|x\|_1}) \leq e^{-\oldconstant{C_pizza1}\lambda^2} \text{ for } \lambda \geq 0\ .
\]
\end{cor}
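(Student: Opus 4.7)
The approach is the classical Herbst argument applied to the entropy bounds of Theorems~\ref{thm: BLM_exponential} and \ref{thm: BLM_two}. Setting $L(\lambda) = \log \mathbb{E}e^{\lambda(F_m - \mathbb{E}F_m)}$, a direct computation gives the identity $Ent(e^{\lambda F_m}) = \mathbb{E}e^{\lambda F_m}[\lambda L'(\lambda) - L(\lambda)]$, so the entropy bound $Ent(e^{\lambda F_m}) \leq C\|x\|_1\lambda^2 \mathbb{E}e^{\lambda F_m}$ is equivalent to $\frac{d}{d\lambda}(L(\lambda)/\lambda) \leq C\|x\|_1$ throughout its interval of validity. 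Since $L(0)=L'(0)=0$ gives $L(\lambda)/\lambda\to 0$ as $\lambda\to 0$, integrating from $0$ yields $L(\lambda) \leq C\|x\|_1\lambda^2$ on that interval. A Chernoff optimization at $|\lambda| = t/(2C\|x\|_1)$ then produces the Gaussian tail bound $e^{-t^2/(4C\|x\|_1)}$ whenever the optimizer lies in the valid range.

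For the upper tail under \eqref{eq: exponential_moments}, Theorem~\ref{thm: BLM_exponential} validates the method on $\lambda\in[0,\oldconstant{D_{26}}]$; the optimizer $\lambda = t/(2C\|x\|_1)$ remains in range as long as $t \leq 2C\oldconstant{D_{26}}\|x\|_1$. Substituting $t = \lambda\sqrt{\|x\|_1}$ yields $e^{-\lambda^2/(4C)}$ for $\lambda \leq 2C\oldconstant{D_{26}}\sqrt{\|x\|_1}$, which is the announced upper-tail inequality.

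For the lower tail, Theorem~\ref{thm: BLM_two} yields the entropy bound on $\lambda\in[-\oldconstant{D_{30}},0]$, so Herbst's method again gives the Gaussian bound, but only on $\lambda \leq 2C\oldconstant{D_{30}}\sqrt{\|x\|_1}$. To reach all $\lambda\geq 0$ as the statement requires, I would exploit the deterministic positivity $F_m \geq 0$, which forces $F_m - \mathbb{E}F_m \geq -\mathbb{E}F_m$ and hence $\mathbb{P}(F_m - \mathbb{E}F_m \leq -\lambda\sqrt{\|x\|_1}) = 0$ once $\lambda\sqrt{\|x\|_1} > \mathbb{E}F_m$. Since $\mathbb{E}F_m = \mathbb{E}T(0,x) \leq \|x\|_1\mathbb{E}t_e$ (by comparison with a deterministic path, as in \eqref{eq: deterministic_bound}), the probability vanishes once $\lambda > \sqrt{\|x\|_1}\mathbb{E}t_e$. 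In the intermediate range $2C\oldconstant{D_{30}}\sqrt{\|x\|_1} < \lambda \leq \sqrt{\|x\|_1}\mathbb{E}t_e$, the unoptimized Chernoff bound at the boundary $-\oldconstant{D_{30}}$ gives an exponential decay of the form $e^{-\oldconstant{D_{30}} t/2}$ with $t = \lambda\sqrt{\|x\|_1}$; since $\lambda$ is capped by $\sqrt{\|x\|_1}\mathbb{E}t_e$ in this regime, one has $\oldconstant{D_{30}}t/2 \geq c\lambda^2$ for $c$ sufficiently small, recovering $e^{-c\lambda^2}$. Combining the three regimes gives the Gaussian bound for all $\lambda\geq 0$.

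The only genuine obstacle is the restricted $\lambda$-range in the entropy bounds, which caps the natural Gaussian regime of Herbst's argument at $\lambda = O(\sqrt{\|x\|_1})$. This is reflected in the $\lambda$-range of the upper-tail statement, while the lower-tail statement covers all $\lambda\geq 0$ precisely because one can patch the large-$\lambda$ regime using the deterministic lower bound $F_m \geq 0$, a tool unavailable on the upper side.
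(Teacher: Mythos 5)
Your proof is correct and takes essentially the same approach as the paper: a Herbst integration of the entropy bounds from Theorems~\ref{thm: BLM_exponential} and~\ref{thm: BLM_two}, then Chernoff, with the deterministic fact $F_m \geq 0$ (so $\mathbb{P}(F_m - \mathbb{E}F_m < -\mathbb{E}F_m)=0$) used to extend the lower-tail bound to all $\lambda\geq 0$. The only difference is cosmetic: you spell out the intermediate-$\lambda$ regime via an unoptimized Chernoff bound at $\lambda = -\oldconstant{D_{30}}$, whereas the paper simply notes one can decrease the constant $\oldconstant{C_pizza1}$.
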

\begin{proof}
This is a standard application of the Herbst argument. An entropy bound of the type $Ent~e^{\lambda X} \leq C\lambda^2 \mathbb{E}e^{\lambda X}$ is rewritten as $\frac{\text{d}}{\text{d}\lambda} \frac{\log \mathbb{E}e^{\lambda(X-\mathbb{E}X)}}{\lambda} \leq C$. By integrating and using Theorems~\ref{thm: BLM_exponential} and \ref{thm: BLM_two}, we obtain
\[
\text{under \eqref{eq: exponential_moments}, }\log \mathbb{E}e^{\lambda(F_m-\mathbb{E}F_m)} \leq \oldconstant{C_55} \lambda^2 \|x\|_1 \text{ for } \lambda \in [0,\oldconstant{D_{26}}] 
\]
and
\[
\text{under }\mathbb{E}t_e^2<\infty,~ \log \mathbb{E}e^{\lambda(F_m-\mathbb{E}F_m)} \leq \oldconstant{C_77} \lambda^2 \|x\|_1 \text{ for } \lambda \in [-\oldconstant{D_{30}},0]\ .
\]
Markov's inequality then implies the upper tail inequality. For the lower tail inequality, Markov only gives
\[
\mathbb{P}(F_m - \mathbb{E}F_m \leq -\lambda \sqrt{\|x\|_1}) \leq e^{-\oldconstant{C_pizza1}\lambda^2} \text{ for } \lambda \in [0,\newconstant \sqrt{\|x\|_1}]\ .
\]
However, noting that for $B = \sup_{x \neq 0} \mathbb{E}T(0,x) / \|x\|_1$, one has $\mathbb{P}(T(0,x) - \mathbb{E}T(0,x) \leq -B \|x\|_1) = 0$, we can decrease \oldconstant{C_pizza1} to deduce the lower tail inequality for all $\lambda \geq 0$.
\end{proof}

By bounding the error $|T(0,x) - F_m|$ as in Section~\ref{sec: reduction}, we obtain a simple proof of Talagrand's inequality \cite[Eq.~(1.15)]{talagrand}:
\begin{cor}
Assuming \eqref{eq: exponential_moments}, there exist positive $\newconstant{\label{C_pizza2}}$ and $\newconstant{\label{C_pizza3}}$ such that for all $x \in \mathbb{Z}^d$,
\[
\mathbb{P}(T(0,x) - \mathbb{E}T(0,x) \geq \lambda \sqrt{\|x\|_1}) \leq e^{-\oldconstant{C_pizza2}\lambda^2} \text{ for } \lambda \in [0,\oldconstant{C_pizza3} \sqrt{\|x\|_1}]\ .
\]
Assuming $\mathbb{E}t_e^2<\infty$, there exists $\newconstant{\label{C_pizza4}}>0$ such that for all $x \in \mathbb{Z}^d$,
\[
\mathbb{P}(T(0,x)- \mathbb{E}T(0,x) \leq -\lambda \sqrt{\|x\|_1}) \leq e^{-\oldconstant{C_pizza4}\lambda^2} \text{ for } \lambda \geq 0\ .
\]
\end{cor}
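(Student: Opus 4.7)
The plan is to mimic the reduction argument of Section~\ref{sec: reduction}, but feeding in the Gaussian concentration of $F_m$ provided by the preceding corollary in place of the exponential concentration used in the proof of Theorem~\ref{thm: main_thm}. The key observation is that the error incurred by replacing $T(0,x)$ with the averaged variable $F_m$ is controlled by a deterministic path and is subexponential on a scale much smaller than $\sqrt{\|x\|_1}$, so it is absorbed by the Gaussian tail in the claimed regime. I will continue to take $\zeta = (4d)^{-1}$, as in Section~\ref{sec: reduction}.

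\textbf{Upper tail under \eqref{eq: exponential_moments}.} Writing $T = T(0,x)$ and using the triangle inequality decomposition exactly as in \eqref{eqn: triangleineq},
\[
\mathbb{P}\bigl(T - \mathbb{E}T \ge \lambda\sqrt{\|x\|_1}\bigr) \le \mathbb{P}\bigl(F_m - \mathbb{E}F_m \ge \lambda\sqrt{\|x\|_1}/2\bigr) + \mathbb{P}\bigl(|T - F_m| \ge \lambda\sqrt{\|x\|_1}/2\bigr).
\]
The first term is bounded by $\exp(-\oldconstant{C_lasagna}\lambda^2/4)$ as long as $\lambda/2 \le \oldconstant{C_pizza}\sqrt{\|x\|_1}$, by the preceding corollary. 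For the second, I would repeat the computation following \eqref{eqn: triangleineq}: subadditivity bounds $|T - F_m|$ by $\frac{1}{\# B_m}\sum_{z\in B_m}(T(0,z)+T(x,x+z))$, each of the passage times $T(0,z)$ with $z \in B_m$ is bounded above by the passage time along a deterministic path of length $\|x\|_1^\zeta$, and Chebyshev applied to $e^{\alpha T(0,z)}$ combined with a union bound over $B_m$ yields, with $M = \lambda\sqrt{\|x\|_1}/4$, a bound of the form $2\|x\|_1^{d\zeta} e^{-\alpha M}\mathbf{C}(\alpha)^{\|x\|_1^\zeta}$. Since $\zeta < 1/2$, the factor $\|x\|_1^\zeta \log \mathbf{C}(\alpha)$ and the polynomial $\|x\|_1^{d\zeta}$ are dominated by $\alpha M/2$ once $\|x\|_1$ is large, so this probability is at most $\exp(-c \lambda \sqrt{\|x\|_1})$. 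In the range $\lambda \le \oldconstant{C_pizza3}\sqrt{\|x\|_1}$ with $\oldconstant{C_pizza3}$ small enough, $\lambda \sqrt{\|x\|_1} \ge \lambda^2/\oldconstant{C_pizza3}$, so this dominates $e^{-\oldconstant{C_pizza2}\lambda^2}$ and combines with the first term to give the claimed upper tail.

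\textbf{Lower tail under $\mathbb{E}t_e^2 < \infty$.} Here the exponential moment argument above is unavailable, but the Harris-FKG argument of Section~\ref{sec: reduction} carries through verbatim with the lower-tail Gaussian bound of the preceding corollary substituted for the exponential one. With $c_x = \sqrt{\|x\|_1}$, $S = \sum_{e \in B_m} t_e$, and $S' = \sum_{e \in x+B_m}t_e$, the same FKG step gives
\[
\mathbb{P}\bigl(T - \mathbb{E}T \le -\lambda c_x\bigr) \le 4\,\mathbb{P}\bigl(F_m - \mathbb{E}F_m \le -\lambda c_x + 4\mathbb{E}S\bigr).
\]
Bounding $4\mathbb{E}S \le \oldconstant{newie} \|x\|_1^{d\zeta} = \oldconstant{newie}\|x\|_1^{1/4}$ and noting $\|x\|_1^{1/4}/c_x = \|x\|_1^{-1/4} \to 0$, the shift is absorbed into the parameter at cost of a bounded additive constant $\oldconstant{newie_2}$, so that for $\lambda \ge \oldconstant{newie_2}$ we obtain an upper bound $4\exp(-\oldconstant{C_pizza1}(\lambda - \oldconstant{newie_2})^2)$ by the preceding corollary, which is of the desired form $e^{-c\lambda^2}$ (after adjusting $\oldconstant{C_pizza4}$). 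For $0 \le \lambda \le \oldconstant{newie_2}$ the claim is trivial by taking $\oldconstant{C_pizza4}$ small.

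\textbf{Main obstacle.} The one place that requires thought is matching the tail scale on both sides of the triangle-inequality decomposition in the upper-tail case. The error $|T - F_m|$ is naturally controlled on an exponential scale $e^{-c\lambda\sqrt{\|x\|_1}}$, which is \emph{stronger} than Gaussian only when $\lambda \lesssim \sqrt{\|x\|_1}$; this is precisely the regime in which the $F_m$ estimate is valid, so the two are compatible, but the proof has to keep the constants consistent so that the restriction on $\lambda$ coming from the error bound matches that coming from the Gaussian estimate for $F_m$. No new probabilistic input is needed beyond what Section~\ref{sec: reduction} already supplies.
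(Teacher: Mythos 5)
Your proposal follows exactly the route the paper intends: the paper's own proof of this corollary is a single sentence pointing to the error estimates of Section~\ref{sec: reduction}, and you have correctly fleshed that sentence out, using the triangle inequality and deterministic-path argument for the upper tail, and the Harris-FKG reduction with shift $4\mathbb{E}S$ for the lower tail, now with the Gaussian bound on $F_m$ from the preceding corollary feeding in. The one spot you correctly flag in your ``main obstacle'' paragraph — that the error bound $\mathbb{P}(|T - F_m| \geq \lambda\sqrt{\|x\|_1}/2) \leq 2\|x\|_1^{d\zeta}e^{-\alpha\lambda\sqrt{\|x\|_1}/8}\mathbf{C}(\alpha)^{\|x\|_1^\zeta}$ beats $e^{-\oldconstant{C_pizza2}\lambda^2}$ only once $\lambda$ exceeds a (vanishing) threshold of order $\|x\|_1^{\zeta - 1/2}$, with a parallel issue in the lower-tail shift — is a real technical point, but it is one the paper itself glosses over in the analogous step for the exponential bound; it can be closed in the standard way by treating $\lambda$ below a fixed threshold via Chebyshev (using the variance bound \eqref{varb}, which falls out of the same machinery) or by a compactness argument in $\|x\|_1$, with $\oldconstant{C_pizza2}$ and $\oldconstant{C_pizza4}$ then taken small. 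So this is the same proof as the paper's, just written out.
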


\bigskip
\noindent
{\bf Acknowledgements.} P. S. thanks IU Bloomington for accommodations and hospitality during a visit when work was done on this project. The authors warmly thank an anonymous referee for an earlier version of this manuscript for detailed comments and supplying the current statement and proof of Lemma~\ref{lma: uniformvar}.

\end{document}